\def\blfootnote{\xdef\@thefnmark{}\@footnotetext}
\theoremstyle{plain}
\newtheorem{theorem}{Theorem}[section]
\newtheorem{lemma}[theorem]{Lemma}
\newtheorem{proposition}[theorem]{Proposition}
\newtheorem{corollary}[theorem]{Corollary}
\theoremstyle{definition}
\newtheorem{definition}[theorem]{Definition}
\newtheorem{remark}[theorem]{Remark}
\theoremstyle{remark}
\mathchardef\emptyset="001F
\numberwithin{equation}{section}
\newcommand{\dist}{{\rm dist\,}}
\newcommand{\e}{\varepsilon}
\newcommand{\Om}{\Omega}
\newcommand{\G}{\Gamma}
\newcommand{\R}{{\mathbb R}}
\newcommand{\N}{\mathbb N}
\newcommand{\wto}{\rightharpoonup}
\renewcommand{\div}{{\rm div}}
\newcommand{\LL}{{\mathcal L}}
\newcommand{\A}{{\mathcal A}}
\newcommand{\Ar}{{\mathcal A}_{reg}}
\newcommand{\setmeno}{\!\setminus\!}
\newcommand{\hn}{{\mathcal H}^{N-1}}
\newcommand{\vf}{v_\varphi}
\newcommand{\vP}{v_{\vartheta,\Phi}}
\newcommand{\nablat}{\nabla_\Gamma}
\newcommand{\Deltat}{\Delta_\Gamma}
\newcommand{\divt}{\div_\Gamma}
\newcommand{\Gam}{{\Gamma\cap U}}
\newcommand{\tu}{T}
\newcommand{\supp}{{\rm supp\,}}
\newcommand{\B}{{\mathbf B}}
\newcommand{\capset}{\, {\subset{\hskip-7pt\cdot}\ }}
\title[A second order minimality condition for the Mumford-Shah functional]
{A second order minimality condition for the Mumford-Shah functional}
\author{F.\ Cagnetti}
\author{M.G.\ Mora}
\author{M.\ Morini}
\address[F.~Cagnetti, M.G.~Mora, and M.~Morini]{SISSA, Via Beirut 2, 
34014 Trieste, Italy}
\email[Filippo Cagnetti]{cagnetti@sissa.it}
\email[Maria Giovanna Mora]{mora@sissa.it}
\email[Massimiliano Morini]{morini@sissa.it}
\begin{document}

\begin{abstract}
A new necessary minimality condition for the Mumford-Shah functional is derived by means
of second order variations. It is expressed in terms of a sign condition for a nonlocal quadratic
form on $H^1_0(\Gamma)$, $\Gamma$ being a submanifold of the regular part of the discontinuity
set of the critical point. Two equivalent formulations are provided: one in terms of the first eigenvalue 
of a suitable compact operator, the other involving a sort of nonlocal capacity of $\Gamma$.
A sufficient condition for minimality is also deduced. Finally, an explicit example is discussed, where
a complete characterization of the domains where the second variation is nonnegative can be given. 
\end{abstract}
\maketitle

{\small

\bigskip
\keywords{\noindent {\bf Keywords:} 
Mumford-Shah functional, free discontinuity problems, 
necessary and sufficient condition for minimality, second variation, shape derivative
}

\subjclass{\noindent {\bf 2000 Mathematics Subject Classification:} 
49K10  
(49Q20) 
}
}
\bigskip
\bigskip

\begin{section}{Introduction}

The subject of this paper is the derivation and the analysis of a new
minimality condition for the Mumford-Shah functional, obtained by means of second order variations.

The homogeneous Mumford-Shah functional on a Lipschitz domain $\Om$ in $\R^N$, $N\geq2$, is defined~as
\begin{equation}\label{MS}
F(u,K)=\int_{\Om\setminus K}|\nabla u|^2\, dx + \hn(\Om\cap K),
\end{equation}
where $\hn$ is the $(N-1)$-dimensional Hausdorff measure and $(u,K)$ is any
pair such that $K$ is a closed subset of $\R^N$ and $u$ belongs to the 
Deny-Lions space $L^{1,2}(\Om\setmeno K)$ (we refer to Section~\ref{prel} for the definition of this space). 
In the sequel the class of all such pairs will be denoted by ${\mathcal A}(\Om)$ and its
elements will be called admissible pairs. 
The functional \eqref{MS}, which was introduced in \cite{MS1,MS2} in the context of image segmentation problems,
arises also in variational models for fracture mechanics (see \cite{Gr20} and \cite{FrMa98}). 

Let $(u,K)\in{\mathcal A}(\Om)$ be a {\em Dirichlet minimizer\/} of $F$, that is,
\begin{equation}\label{min0}
F(u,K)\leq F(v,K')
\end{equation}
for every $(v,K')\in{\mathcal A}(\Om)$ with $v=u$ on $\partial\Om$ in the sense of traces.
It is well known that $u$ is harmonic in $\Om\setmeno K$ and satisfies a
Neumann condition on $K$; more precisely, $u$ solves the equation
\begin{equation} \label{wHarm}
\begin{cases}
\Delta u =0 & \text{in } \Om\setmeno K, \\
\partial_\nu u=0 &\text{on } K.
\end{cases}
\end{equation}
As for the regularity of the discontinuity set $K$,
one can prove (see \cite{AFP} and \cite{Bon}) that
$K\cap\Om$ can be decomposed as 
\begin{equation}\label{decK0}
K\cap\Om=\Gamma_r\cup\Gamma_s,
\end{equation}
where $\Gamma_s$ is closed with $\hn(\Gamma_s)=0$
and $\Gamma_r$ is an orientable $(N-1)$-manifold of class $C^\infty$. 
Since $u$ is of class $C^\infty$ up to $\Gamma_r$ by \eqref{wHarm}, 
the traces $\nabla u^\pm$ of $\nabla u$ are well defined on both sides of $\Gamma_r$.
By considering variations of $\Gamma_r$ one can show (see \cite{MS2})
that the minimality \eqref{min0} implies also the following transmission condition:
\begin{equation}\label{trans}
|\nabla u^+|^2-|\nabla u^-|^2=H \qquad \text{on } \Gamma_r,
\end{equation}
where $H$ is the mean curvature of $\Gamma_r$.
We point out that by \eqref{wHarm} and \eqref{trans}
the function $u$ is subject to overdetermined boundary conditions on $\Gamma_r$.
Exploiting this observation it has been proved in \cite{KLM} that $\Gamma_r$ has in fact analytic
regularity.

Let now $(u,K)$ be a {\em critical point\/} of $F$, that is, 
a pair in ${\mathcal A}(\Om)$ satisfying \eqref{wHarm}, \eqref{decK0},
and \eqref{trans}. Due to the nonconvexity of $F$ one cannot expect
these conditions to be in general sufficient for minimality.
Nevertheless using a calibration method it is possible to prove that 
critical points are Dirichlet minimizers on small domains.
More precisely, it has been proved in \cite{MM01} that,
if $N=2$, for every regular arc $\Gamma$ compactly contained in $\Gamma_r$ there
exists a tubular neighbourhood $U$ of $\Gamma$ such that $(u,K)$ is a Dirichlet minimizer of $F$
in $U$.
The minimality on large domains can fail in a rather surprising way: there might exist critical points whose energy can be strictly
lowered by considering arbitrarily small diffeomorphic deformations of the regular part $\Gamma_r$ of $K$. 
An example of this phenomenon was given in \cite[Proposition~4.1]{MM01} by considering the critical point $(u_0,K_0)$, where
$u_0(x,y)=x$ for $y\geq0$, $u_0(x,y)=-x$ for $y<0$, and $K_0=\{y=0\}$. If $\Omega$ is the rectangle
$(x_0,x_0+\ell){\times}(-y_0,y_0)$ with $\ell$ and $y_0$ large enough, one can show that the functional can be decreased
by perturbing $\Gamma_r=(x_0,x_0+\ell){\times}\{0\}$ by a diffeomorphism arbitrarily close to the identity. 

In this paper we begin a study of second order necessary conditions for minimality.
More precisely, given a Dirichlet minimizer $(u,K)$, we compute the second derivative of the 
energy along variations of the form $(u_\e,K_\e)$,
where $K_\e=\Phi_\e(K\cap \Om)$, $\Phi_\e$ being a one-parameter family of diffeomorphisms
coinciding with the identity on a fixed neighbourhood of $\Gamma_s\cup\partial\Om$,
and $u_\e$ is the solution of the problem
$$
\min\left\{ \int_\Om |\nabla v|^2\, dx : \ v\in L^{1,2}(\Om\setmeno K_\e), \ v=u \text{ on } \partial\Om
\right\}.
$$
This approach has some similarities with the computation of ``shape derivatives" introduced in \cite{Zo}
in the context of shape optimization problems.  
As $\Phi_\e$ coincides with the identity on a neighbourhood of $\Gamma_s$, the singular part
of the discontinuity set is left unchanged by the variation, which can thus affect only
the regular part $\Gamma_r$. 
We also point out that the variation $u_\e$ of the function $u$ has a nonlocal character. 
This is crucial to retrieve information about global properties, 
such as the size and the geometry of $\Om$ and $\Gamma_r$.

Whereas the first order variation of $F$ along $(u_\e, K_\e)$ gives back
the equilibrium condition \eqref{trans}, the second order variation provides us
with a new necessary minimality condition, expressed in terms of a sign
condition for a quadratic form depending on $(u,K)$. More precisely, for every 
submanifold $\Gamma$ compactly contained in $\Gamma_r$ and every Lipschitz domain $U\subset\Omega$
we consider the functional $\delta^2\!F((u,\Gamma);U)$ on $H^1_0(\Gam)$
defined as
\begin{multline}\nonumber
\delta^2\!F((u,\Gamma);U)[\varphi]:= 2\int_{\Gam} (\vf^+\partial_\nu\vf^+ - \vf^-\partial_\nu\vf^- )\, d\hn
+\int_{\Gam} |\nablat \varphi |^2\, d\hn 
\\
{}+\int_{\Gam} (2\B[\nablat u^+,\nablat u^+]
-2\B[\nablat u^-,\nablat u^-] -|\B|^2) \varphi^2\, d\hn
\end{multline}
for every $\varphi\in H^1_0(\Gam)$, where $\vf\in L^{1,2}(U\setmeno K)$ solves the problem 
$$
\begin{cases}
\Delta \vf =0 & \text{in } U\setmeno K, \\
\vf=0 & \text{on }\partial U, \\
\partial_\nu \vf^\pm=\divt (\varphi\nablat u^\pm) &\text{on } \Gam, \\
\partial_\nu \vf=0 & \text{on } K\cap U\setmeno\Gamma.
\end{cases}
$$
Here the symbols $\nabla_\G$ and $\div_\G$ denote the tangential gradient and the
tangential divergence on $\G$, $\B$ is the second fundamental form of $\G$, while
$\vf^\pm$ denote the traces of $\vf$ on the two sides of $\Gamma$.
As $\vf$ depends linearly on $\varphi$, the functional 
$\delta^2\!F ((u,\Gamma);U)$ defines a quadratic form on $H^1_0(\Gam)$.

We first show (Theorem~\ref{th:necessary}) that, if $(u,K)$ is a Dirichlet minimizer of $F$, 
then for every $\Gamma$ and $U$ as above
we have the second order condition
\begin{equation}\label{into2}
\delta^2\!F((u,\Gamma);U)[\varphi]\geq0 \qquad \text{for every } \varphi\in H^1_0(\Gam).
\end{equation}
Conversely, we prove (Theorem~\ref{thm:suff}) that, if $N\leq3$ and $(u,K)$ is a critical point satisfying the stronger condition
\begin{equation}\label{into1}
\delta^2\!F((u,\Gamma);U)[\varphi]>0 \qquad \text{for every } \varphi\in H^1_0(\Gam)\setmeno\{0\},
\end{equation}
then $(u,K)$ is a minimizer of $F$ on $U$
with respect to all pairs $(v,\Phi(K\cap U))$ such that $\Phi$ is a diffeomorphism belonging to a $C^2$-neighbourhood
of the identity and coinciding with the identity on $K\cap U\setmeno\G$, and $v\in L^{1,2}(U\setmeno \Phi(K\cap U))$
with $v=u$ on $\partial U$.
The restriction $N\leq3$ is a technical assumption. In fact, a slightly weaker minimality property
is shown to hold in any dimension (see Remark~\ref{rmk:add}).

A detailed study of the stronger condition \eqref{into1}
is performed in Section~\ref{sec:eq}, where two equivalent formulations are shown.
The first one (Theorem~\ref{thm:eq1}) is a condition on the first eigenvalue of the (nonlocal) compact operator $T:H^1_0(\Gam)\to H^1_0(\Gam)$,
defined for every $\varphi \in H^1_0(\Gam)$ as
$$
T\varphi= R\big(2\nablat u^+{\,\cdot\,}\nablat \vf^+
-2\nablat u^-{\,\cdot\,}\nablat \vf^-\big).
$$
Here $R:H^{-1}(\Gam)\to H^1_0(\Gam)$ denotes the resolvent operator which maps
$f\in H^{-1}(\Gam)$ into the solution $\theta$ of the problem
$$
\begin{cases}
-\Deltat \theta +a\theta=f &
\text{in }\Gamma\cap U,  \\
\theta \in H^1_0(\Gam),
\end{cases}
$$
where $\Deltat$ is the Laplace-Beltrami operator on $\Gamma$
and 
$$
a(x)=2 \B(x)[\nablat u^+(x),\nablat u^+(x)]-2 \B(x)[\nablat u^-(x),\nablat u^-(x)] -|\B(x)|^2.
$$
The second equivalent formulation (Theorem~\ref{thm:eq2}) is expressed in terms of the variational problem
$$
\min\Big\{2 \int_U |\nabla v|^2\, dx : \ v\in L^{1,2}(U\setmeno K), \ v=0 \text{ on } \partial U,
\ \int_{\Gam} (a\psi_v^2+|\nablat\psi_v|^2)\, d\hn=1\Big\},
$$
where $\psi_v=R(2\nablat u^+{\,\cdot\,}\nablat v^+ -2\nablat u^-{\,\cdot\,}\nablat v^-)$. This minimum problem
describes a sort of nonlocal ``capacity" of $\Gamma$ with respect to $U$, where the usual pointwise constraint
$v=1$ a.e.\ on $\Gam$ is replaced by the integral condition on $\psi_v$.
We also note that this second formulation is strictly related to the sufficient condition for graph-minimality studied in \cite{MM01}.
It is easy to see that the sufficient condition in \cite{MM01} is stronger than \eqref{into1}
and in fact it implies a stronger minimality property. The comparison between the two conditions is discussed in the
explicit example of Section~\ref{sec:ex} (see Remark~\ref{rmk:ex}), where we consider the critical point $(u_0,K_0)$
of \cite[Proposition~4.1]{MM01} and we give a complete characterization 
of the rectangles $U=(x_0,x_0+\ell){\times}(-y_0,y_0)$
where condition \eqref{into1} is satisfied.

Finally, we prove some stability and instability results. We first show that,
if $(u,K)$ is a critical point, then condition \eqref{into1} is automatically satisfied 
when the domain or the support of the variation is sufficiently small (Propositions~\ref{thm:small} 
and~\ref{thm:smsupp}). Instead, condition \eqref{into2} may fail if the domain is too large (Proposition~\ref{thm:fail}).
This is in agreement with the two dimensional results of \cite{MM01}. 

It remains an open problem to understand whether condition \eqref{into1} implies a stronger minimality property, 
in analogy to the classical results of the Calculus of Variations for weak minimizers.
This would probably require the use of different techniques,
such as calibration methods or Weierstrass fields theory.

It is our intention to investigate variations involving also the
singular part $\Gamma_s$ of the discontinuity set in future work.
Moreover, it is our belief that the techniques developed in this paper 
can be applied to more general functionals, both in the bulk and in the surface energy.

The plan of the paper is the following. In Section~\ref{prel} we collect all the notation
and the preliminary results needed in the paper. Section~\ref{sec:var2} is devoted to
the derivation of the second order necessary condition \eqref{into2}. In Section~\ref{sec:eq} we discuss the
equivalent formulations of the sufficient condition \eqref{into1}, which is proved in Section~\ref{sec:suf}.
Stability and instability results are the subject of Section~\ref{sec:st}, while the explicit example
in dimension $2$ is studied in Section~\ref{sec:ex}. Finally, the regularity results needed in the derivation argument
are collected and proved in Section~\ref{appendix}.
\end{section}

\begin{section}{Notation and preliminaries}\label{prel}

In this section we fix the notation and we recall some preliminary results.
\medskip

\noindent
{\bf Matrices and linear operators.} 
Given a linear operator $A:\R^N\to \R^d$, we denote the action of $A$ 
on the generic vector $h\in \R^N$ by $A[h]$. 
We will usually identify linear operators with matrices. 
We denote the euclidean norm of a linear operator (or a matrix) $A$ by
$$
|A|:=({\rm trace}\,(A^TA))^{1/2},
$$
where $A^T:\R^d\to \R^N$ stands for the adjoint operator.  
If $d=N$ we can consider the bilinear form associated with $A$
$$
A[h_1, h_2]:=A[h_1]{\,\cdot\,} h_2\qquad\text{for }h_1, h_2\in \R^N,
$$
where the dot denotes the scalar product of $\R^N$. 
Conversely, to any bilinear form $B:\R^N{\times}\R^N\to\R$ 
we can naturally associate a linear operator, still denoted by $B$, 
whose action on the generic vector $h\in\R^N$ can be described by duality as
$$
B[h]{\,\cdot\,} z=B[h,z]\qquad\text{for every }z\in\R^N.
$$
We will usually identify bilinear forms with the associated linear operators.
\medskip

\noindent 
{\bf Geometric preliminaries.}
Let $\Gamma\subset\R^N$ be a smooth orientable $(N-1)$-dimensional manifold and 
assume that there exists a smooth orientable $(N-1)$-dimensional manifold $M$ 
such that $\Gamma\subset\subset M$.
For every $x\in \Gamma$ we denote the tangent space and the normal space 
to $\Gamma$ at $x$ by $T_x\Gamma$ and $N_x\Gamma$, respectively. 

Let $S^{N-1}$ be the $(N-1)$-dimensional unit sphere in $\R^N$.
We call an {\em orientation} for $\Gamma$ any smooth vector field
$\nu:\Gamma\to S^{N-1}$ such that $\nu(x)\in N_x\G$ for every $x\in\G$.
Given an orientation we can define a signed distance function from $\G$, which turns out to be
smooth in a tubular neighbourhood ${\mathcal U}$ of $\G$ and whose gradient coincides with $\nu$ on $\G$.
The extension of the normal vector field 
provided by the gradient of the signed distance function will be still denoted by
$\nu:{\mathcal U}\to S^{N-1}$.

We now recall the definition of some tangential differential operators.  
Let $g:\mathcal{U}\to \R^d$ be a smooth function.
The {\em tangential differential} $d_\Gamma g(x)$ of $g$ at $x\in\Gamma$ is the linear operator
from $\R^N$ into $\R^d$ given by $d_\Gamma g(x):=dg(x)\circ \pi_x$,
where $dg(x)$ is the usual differential of $g$ at $x$
and $\pi_x$ is the orthogonal projection on $T_x\Gamma$. 
We denote the matrix (the vector if $d=1$) associated
with $d_\Gamma g(x)$ by $D_\Gamma g(x)$ ($\nabla_\Gamma g(x)$ if $d=1$).
As remarked above we will often identify matrices with linear operators.   
Note that
$$
(D_\Gamma g(x))^T[h]{\,\cdot\,}\nu(x)=h{\,\cdot\,}D_\Gamma g(x)[\nu(x)]=0\quad \text{for every }
h\in\R^d,
$$
that is, $(D_{\Gamma}g(x))^T$ maps $\R^d$ into $T_x\Gamma$. 
We remark also that by our choice of the extension of $\nu$ around $\Gamma$
we have
\begin{equation}\label{NU}
D\nu=D_\Gamma \nu \qquad\text{on } \G.
\end{equation}
If $d=N$ we can define the {\em tangential divergence} $\div_{\Gamma} g$ of $g$ as 
$$
\div_{\Gamma} g:=\sum_{j=1}^Ne_j{\,\cdot\,}\nabla_{\Gamma}g_j,
$$
where $e_1,\dots, e_N$ are the vectors of the canonical basis of $\R^N$ and $g_1,\dots, g_N$ are 
the corresponding components of $g$. It turns out that 
$$
\div_{\Gamma} g(x)=\sum_{j=1}^{N-1}\tau_j{\,\cdot\,}\partial_{\tau_j}g,
$$
where $\tau_1,\dots, \tau_{N-1}$ is any orthonormal basis of $T_x\Gamma$ and
for every $v\in S^{N-1}$ the symbol $\partial_v$ denotes the derivative in the direction $v$.
Sometimes it is also useful to bear in mind the identity 
$$
\div\, g=\div_{\Gamma}g+\nu{\,\cdot\,}\partial_{\nu}g.
$$ 
In particular, as $\partial_{\nu}\nu=0$ by \eqref{NU},
we deduce that
$\div\,\nu=\div_{\Gamma}\nu$ on $\Gamma$. We will make repeated use of the following identities:
$$
\begin{array}{c}
 \div_{\Gamma} (\varphi g_1) 
=  \nabla_{\Gamma} \varphi {\,\cdot\,} g_1 
+ \varphi \, \div_{\Gamma} g_1, 
\medskip\\
 \nabla_{\Gamma} (g_1{\,\cdot\,} g_2)
= ( D_{\Gamma} g_1 )^T [g_2] 
+ ( D_{\Gamma} g_2 )^T [g_1].
\end{array}
$$
for $\varphi \in C^1(\mathcal{U})$ and $g_1,g_2 \in C^1(\mathcal{U};\R^N)$. 
Finally, we recall that the {\em Laplace-Beltrami operator $\Delta_\G$ on
$\Gamma$} is defined as
$$
\Delta_{\Gamma}g:=\div_{\Gamma}(\nabla_{\Gamma}g)
$$
for every smooth real valued function $g$. We remark that all the tangential differential operators
introduced so far have an intrisic meaning, since they only depend on the restriction
of $g$ to $\Gamma$. 

For every $x\in\Gamma$ we set 
\begin{equation}\label{sff}
\B(x):=D_{\Gamma}\nu(x)=D\nu(x).
\end{equation}
The bilinear form associated with $\B(x)$ is symmetric and, when restricted to 
$T_x \Gamma{\times}T_x \Gamma$, it coincides with the {\em second fundamental form
of $\Gamma$ at $x$}. 
It is also possible to prove that $T_x\Gamma$ is an invariant space for $\B(x)$. 
 
We consider also the function $H:\mathcal{U}\to \R$ defined by
\begin{equation}\label{defH}
H:=\div\, \nu.
\end{equation}
On $\Gamma$ we have $H=\div\, \nu=\div_{\Gamma} \nu={\rm trace\,}\B$,
that is, for every $x\in\Gamma$ the value
$H(x)$ coincides with the {\em mean curvature of $\Gamma$ at $x$}. 

It is important to recall the following {\em divergence formula}:
\begin{equation}\label{fdiv}
\int_{\G} \div_{\G} g\,d \hn = \int_{\G} H  (g{\,\cdot\,}\nu)\, d\hn,
\end{equation}
which holds for every smooth function $g:\mathcal{U}\to \R^N$ with
$\supp g\cap\Gamma\subset\subset\Gamma$.
Note that \eqref{fdiv} allows to extend to tangential operators the usual integration by parts formula. 
Indeed, we have
\begin{equation}\label{fparts}
\int_{\G}\varphi\, \div_{\G} g\,d \hn =
- \int_{\G} \nabla_{\Gamma} \varphi {\,\cdot\,} g\, d\hn
\end{equation}
for every smooth $g:\mathcal{U}\to \R^N$ such that $g(x)\in T_x\Gamma$ for $x\in\Gamma$, and 
every smooth $\varphi:\mathcal{U}\to \R$ with  
$\supp \varphi\cap\Gamma\subset\subset\Gamma$.

Let $U$ be a bounded open set in $\R^N$ with $U\cap\Gamma\neq\varnothing$ and
let $\Phi:\overline{U}\to\overline{U}$ be a smooth orientation-preserving diffeomorphism.
Then $\Gamma_{\Phi}:=\Phi(\Gamma\cap U)$ is still an orientable smooth $(N-1)$-manifold.
A possible choice for the orientation is given by the vector field
\begin{equation}\label{nutfit0}
\nu_\Phi
= \frac{(D\Phi)^{-T}  [\nu]}{|(D\Phi )^{-T} [\nu]|}\circ\Phi^{-1}.
\end{equation}
Accordingly we can define the functions $\B_{\Phi}$ and $H_{\Phi}$ as in \eqref{sff} and \eqref{defH}, 
with $\Gamma$ and $\nu$ replaced by $\Gamma_{\Phi}$ and $\nu_{\Phi}$, respectively.
We shall use  the following identity, which is  a particular case of the so-called generalized area formula 
(see, e.g., \cite[Theorem~2.91]{AFP}): 
for every $\psi\in L^1(\Gamma_\Phi)$
\begin{equation}\label{farea}
\int_{\Gamma_\Phi} \psi \,d\hn =\int_\Gamma (\psi\circ\Phi) J_\Phi \, d\hn,
\end{equation}
where $J_\Phi:=|(D\Phi)^{-T}[\nu]|\det D\Phi$ is the $(N-1)$-dimensional Jacobian of $\Phi$.

We conclude this subsection by introducing the Sobolev space $H^1_0(\Gamma)$, which is defined as the closure 
of $C^{\infty}_c(\Gamma)$ with respect to the norm
$$
\|u\|^2_{H^1(\Gamma)}:=\int_\Gamma \big(|u|^2+|\nabla_{\Gamma}u|^2\big)\, d\hn.
$$ 
Many of the properties of classical Sobolev spaces, such as Poincar\'e inequalities and integration
by parts formulas continue to hold. We refer to \cite{Hebey} for a complete treatment of
these spaces.  We shall denote the dual space of $H^1_0(\Gamma)$ by $H^{-1}(\Gamma)$.
\medskip

\noindent 
{\bf Deny-Lions spaces.} 
Given a bounded open subset $\Om \subset \R^N$, we say that $\Om$
has a Lipschitz boundary at a point $x \in \partial \Om$ if 
there exist an orthogonal coordinate system $(y_1, \dots, y_N)$, a
coordinate rectangle $R = (a_1, b_1){\times}\dots{\times}(a_N, b_N)$ containing $x$, 
and a Lipschitz function $\Psi:(a_1, b_1){\times}\dots{\times}(a_{N-1}, b_{N-1})\to (a_N, b_N)$ such that $\Om \cap R = 
\{y \in R :\, y_N < \Psi(y_1, \dots, y_{N-1}) \}$.
The set of all such points, which is by definition relatively open, is denoted by 
$\partial_L \Om$. If $\partial_L \Omega = \partial \Om$ we say that $\Om$ is a Lipschitz domain.

To deal with possibly unbounded functions in problem \eqref{wHarm}, besides the classical Sobolev
space $H^1(\Om)$ we shall also use the {\em Deny-Lions space}
$$
L^{1,2}(\Om):= \{ u \in L^2_{loc}(\Om):
\, \nabla u \in L^2 (\Om; \R^N)\},
$$
which coincides with the space of all distributions on $\Om$ whose gradient belongs to
$L^2(\Om; \R^N)$.
In the brief account below we essentially follow \cite[Section~2]{DT02} (see also \cite{DL54}). 
The relation between Sobolev and Deny-Lions spaces is unveiled by the following proposition.

\begin{proposition} \label{pr2}
Let $u \in L^{1,2}(\Om)$ and let $x \in \partial_L \Om$. Then there exists a neighbourhood
$U$ of $x$ such that $u|_{\Om \cap U}\in H^1 (\Om \cap U)$. 
In particular, if $\Om$ is Lipschitz, then 
$L^{1,2}(\Om)=H^1(\Om)$.
\end{proposition}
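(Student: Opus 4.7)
The plan is to prove the local claim by a slicing argument, and then deduce the global statement by a finite covering. The key point is that although $u$ is only \emph{locally} $L^2$ inside $\Om$, the $L^2$ norm up to the Lipschitz boundary near $x$ can be controlled by integrating $\partial_N u\in L^2(\Om)$ along the vertical fibres of a graph-cylinder, anchored on a horizontal slice that sits strictly inside $\Om$.

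First I would fix the representation of $\partial_L\Om$ near $x$ provided by the definition: an orthogonal system $(y_1,\dots,y_N)$, a box $R=(a_1,b_1){\times}\cdots{\times}(a_N,b_N)$, and the Lipschitz function $\Psi$ giving $\Om\cap R=\{y_N<\Psi(y')\}$. Shrinking in the horizontal variables, pick a closed box $Q'\subset\subset(a_1,b_1){\times}\cdots{\times}(a_{N-1},b_{N-1})$ containing the projection $x'$ of $x$ in its interior. By continuity of $\Psi$ on the compact $Q'$, there is $c$ with $a_N<c<\min_{Q'}\Psi$; the required neighbourhood is then $U:=(\mathrm{int}\,Q'){\times}(c,b_N)$, and one checks immediately that $x\in U$ and $\Om\cap U=\{(y',y_N): y'\in\mathrm{int}\,Q',\ c<y_N<\Psi(y')\}$.

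The core step is a pointwise representation of $u$. Since a slightly thicker slab $Q'{\times}[c',c]\subset\subset\Om$ (for a suitable $c'\in(a_N,c)$) and $u\in L^2_{loc}(\Om)$ give $u\in L^2(Q'{\times}(c',c))$, Fubini yields a level $s\in(c',c)$ with $u(\cdot,s)\in L^2(Q')$. Combining $\partial_N u\in L^2(\Om)$ with the ACL characterization of Sobolev functions on the compactly contained subboxes $\{y_N<\Psi(y')-1/n\}$, I would conclude that for a.e.\ $y'\in Q'$ the function $u(y',\cdot)$ admits an absolutely continuous representative on $(s,\Psi(y'))$, so that
$$
u(y',y_N)=u(y',s)+\int_s^{y_N}\partial_N u(y',t)\,dt\qquad\text{for a.e.\ }y'\in Q',\ s<y_N<\Psi(y').
$$
Squaring, applying Cauchy--Schwarz, and integrating successively in $y_N\in(s,\Psi(y'))$ and in $y'\in Q'$ produces an estimate of the form
$$
\|u\|_{L^2(\Om\cap U)}^2\le C\bigl(\|u(\cdot,s)\|_{L^2(Q')}^2+\|\partial_N u\|_{L^2(\Om)}^2\bigr)<\infty,
$$
with $C$ depending only on $b_N-a_N$. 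Together with $\nabla u\in L^2(\Om)$ this gives $u|_{\Om\cap U}\in H^1(\Om\cap U)$.

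For the global assertion, if $\Om$ is Lipschitz then $\partial\Om=\partial_L\Om$ is compact, hence can be covered by finitely many neighbourhoods $U_1,\dots,U_k$ of the type above; adding an open set $U_0\subset\subset\Om$ covering $\Om\setmeno\bigcup_{i\ge1}U_i$ and using $u\in L^2_{loc}(\Om)$ on $U_0$ gives $u\in L^2(\Om)$, whence $L^{1,2}(\Om)\subset H^1(\Om)$; the reverse inclusion is immediate. The technical obstacle I expect is the rigorous justification of the fundamental theorem of calculus along vertical fibres up to the graph of $\Psi$: this has to be bootstrapped from the fact that $u$ already lies in $H^1$ on every subdomain compactly contained in $\Om$, via an exhaustion of the subgraph by such subdomains and the standard ACL property.
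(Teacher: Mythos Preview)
The paper does not actually supply a proof of this proposition: it is stated as a known preliminary fact, with the reader referred to \cite[Section~2]{DT02} and \cite{DL54} for the theory of Deny--Lions spaces. So there is no argument in the paper to compare yours against.

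That said, your proof is correct and is in fact the standard one found in those references. The slicing argument---anchoring on an interior horizontal slice where $u(\cdot,s)\in L^2(Q')$ by Fubini, then integrating $\partial_N u\in L^2$ along vertical fibres via the ACL property, and finally using Cauchy--Schwarz to control $\|u\|_{L^2(\Om\cap U)}$---is exactly how this is done. Your handling of the one genuine subtlety (extending the fundamental theorem of calculus up to the graph of $\Psi$ by exhausting through the subgraphs $\{y_N<\Psi(y')-1/n\}$, which are compactly contained in $\Om$ thanks to the Lipschitz bound on $\Psi$) is also correct, and you were right to flag it. The global statement via a finite cover of the compact boundary $\partial\Om$ plus one interior set is routine.
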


Let $A$ and $B$ be $\hn$-measurable sets in $\R^N$.
We say that $A$ is {\em quasi-contained} in $B$, and we write 
$A\capset B$, if $\hn(B\setmeno A)=0$. 
It is known that every function in $L^{1,2}(\Om)$ can be specified at 
$\hn$-a.e.\ point of $\Om\cup \partial_L\Om$.  
Hence, if $\Lambda\subset\partial\Om$ is relatively open and 
$\Lambda\capset \partial_L\Om$, we can define
the space
\begin{equation}\label{L120}
L^{1,2}_0(\Om; \Lambda):=\{u \in L^{1,2}(\Om) :\, 
u=0 \quad \hn\text{-a.e.\ on } \Lambda \},
\end{equation}
where we identify functions which differ by a constant on the connected components 
of $\Om$ whose boundary does not meet $\Lambda$.
With this identification, arguing as in \cite[Corollary 2.3]{DT02}, one can prove
the following.

\begin{proposition} \label{cor1}
The space $L^{1,2}_0(\Om;\Lambda)$ introduced in \eqref{L120}  
is a Hilbert space endowed with the norm
$\| \nabla u \|_{L^2(\Om;\R^N)}$.
\end{proposition}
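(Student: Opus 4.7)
The plan is to verify two things: that $\|\nabla u\|_{L^2(\Om;\R^N)}$ is indeed a norm on $L^{1,2}_0(\Om;\Lambda)$ after the stated identification, and that the resulting normed space is complete. Homogeneity and subadditivity are inherited from $L^2$, so the only nontrivial point for the norm property is definiteness. If $\nabla u=0$ a.e.\ in $\Om$, then $u$ is constant on each connected component of $\Om$. Take a component $\omega$ whose boundary meets $\Lambda$ and pick $x\in\partial\omega\cap\Lambda$; since $\Lambda\capset\partial_L\Om$, we may assume $x\in\partial_L\Om$, and Proposition~\ref{pr2} provides a neighbourhood $U$ of $x$ in which $u|_{\Om\cap U}\in H^1(\Om\cap U)$. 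The vanishing of the trace of $u$ on $\Lambda\cap U$ then forces the constant value of $u$ on $\omega$ to be zero. On the remaining components, $u$ is identified with zero by the quotient.

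For completeness, I would take $\{u_n\}\subset L^{1,2}_0(\Om;\Lambda)$ with $\{\nabla u_n\}$ Cauchy in $L^2(\Om;\R^N)$, and let $G$ denote the $L^2$-limit of the gradients. The goal is to produce $u\in L^{1,2}_0(\Om;\Lambda)$ with $\nabla u=G$, to which the $u_n$'s converge in the quotient norm. The key local tool is a Poincaré-type estimate near $\Lambda$: for $x\in\Lambda\subset\partial_L\Om$, Proposition~\ref{pr2} provides a neighbourhood $U$ of $x$ on which $\Om\cap U$ is Lipschitz, $u_n|_{\Om\cap U}\in H^1(\Om\cap U)$, and the trace of $u_n$ vanishes on the relatively open piece $\Lambda\cap U$ of $\partial(\Om\cap U)$. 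A standard Poincaré inequality then yields
$$
\|u_n-u_m\|_{L^2(\Om\cap U)}\le C\,\|\nabla u_n-\nabla u_m\|_{L^2(\Om\cap U;\R^N)},
$$
so $\{u_n\}$ is Cauchy in $H^1(\Om\cap U)$ and converges to some $u\in H^1(\Om\cap U)$ with $\nabla u=G$ on $\Om\cap U$ and $u=0$ $\hn$-a.e.\ on $\Lambda\cap U$.

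To globalize, fix a connected component $\omega$ of $\Om$. If $\partial\omega\cap\Lambda\neq\varnothing$, the previous step anchors a distinguished representative of $u_n$ near $\Lambda$, and one can propagate the $H^1_{loc}$ convergence through $\omega$ by chaining the interior Poincaré inequality along a countable cover by balls (applied to the differences $u_n-u_m$), obtaining a limit $u\in L^{1,2}(\omega)$ with $\nabla u=G$ on $\omega$ and $u=0$ $\hn$-a.e.\ on $\Lambda\cap\partial\omega$. If instead $\partial\omega\cap\Lambda=\varnothing$, fix a ball $B\subset\subset\omega$ and normalize each $u_n$ by subtracting its mean over $B$; the same chaining argument yields a limit $u$ on $\omega$ with $\nabla u=G$, the remaining additive-constant ambiguity being exactly the one killed by the quotient. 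Patching together the limits on all components defines $u\in L^{1,2}_0(\Om;\Lambda)$, and by construction $\|\nabla u_n-\nabla u\|_{L^2(\Om;\R^N)}\to 0$.

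The main obstacle is conceptual rather than computational: one must choose the representative of each $u_n$ on each component coherently with the anchor provided by the local $H^1$ convergence near $\Lambda$, and verify that the leftover ambiguity of additive constants on the components disjoint from $\Lambda$ coincides with the identification made in the definition of $L^{1,2}_0(\Om;\Lambda)$. Once this bookkeeping is in place, the argument reduces to a routine Poincaré-based completeness proof in the spirit of \cite[Corollary~2.3]{DT02}.
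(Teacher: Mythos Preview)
The paper does not supply its own proof of this proposition; it merely states that the result follows by arguing as in \cite[Corollary~2.3]{DT02}. Your sketch is correct and is precisely the standard argument behind that reference: definiteness comes from the trace condition on components touching $\Lambda$ combined with the quotient identification on the remaining ones, and completeness is obtained by anchoring a Poincar\'e inequality near $\Lambda$ (via Proposition~\ref{pr2}) and propagating $H^1_{loc}$ convergence along chains of balls inside each connected component.
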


\end{section}

\begin{section}{The second variation}\label{sec:var2}

In this section we define and compute a suitable notion of second variation 
for the Mumford-Shah functional \eqref{MS}.
We recall that $\A(\Om)$ is the class of all pairs $(u,K)$ such that $K$ 
is a closed subset of $\R^N$ and $u\in L^{1,2}(\Om\setmeno K)$.
It is useful to ``localize'' the definition of $F$ to any open
subset $U\subset\Om$ by setting
$$
F((u,K); U):=\int_{U\setminus K}|\nabla u|^2\, dx + \hn(U\cap K)
$$
for every admissible pair $(u,K)\in \A(\Om)$.  

In the sequel we shall consider only admissible pairs $(u,K)$ 
which are partially regular in the sense of the following definition. 

\begin{definition}\label{def:partreg}
Let $\Om\subset\R^N$ be a Lipschitz domain and let $(u,K)\in \A(\Om)$. 
We say that $(u,K)$ is {\em partially regular in $\Om$} if
$\partial\Om\capset \partial_L(\Om\setmeno K)$ 
(see the end of Section~\ref{prel}),
$u$ solves the problem
\begin{equation}\label{usolves}
\min\Big\{\int_{\Om\setminus K}|\nabla v|^2\, dx:\
v-u\in L^{1,2}_0(\Om\setmeno K; \partial \Om)\Big\},
\end{equation}
and $K$ can be decomposed as 
$K=\Gamma_r\cup \Gamma_s$, with $\Gamma_r\cap \Gamma_s=\varnothing$,
$\Gamma_s$ relatively closed, $\hn(\Gamma_s)=0$, and $\Gamma_r$
orientable $(N-1)$-manifold of class $C^{\infty}$. 
We denote the class of all such pairs by $\Ar(\Om)$. Finally we say that $U\subset \Om$ is 
an {\em admissible subdomain for $(u,K)$} if it is Lipschitz and $(u, K\cap U)\in \Ar(U)$.
\end{definition}

\begin{remark}
The previous definition is motivated by the regularity results 
for local minimizers of free discontinuity problems (see \cite{AFP} and 
\cite{Bon}).
Note also that $u$ solves \eqref{usolves} if and only if 
\begin{equation}\label{uequiv}
\int_{\Om}\nabla u{\,\cdot\,}\nabla z\, dx=0\qquad
\text{for every }z\in L^{1,2}_0(\Om\setmeno K; \partial \Om),
\end{equation}
where we also used the fact that $K$ has Lebesgue measure equal to 0.
\end{remark}
 
In the next definition we introduce the class of admissible variations 
of the discontinuity set $K$. 
 
\begin{definition}\label{flux}
Let $\Om\subset\R^N$ be a Lipschitz domain, let $(u,K)\in \Ar(\Om)$, 
let $\Gamma\subset\subset\Gamma_r$ be relatively open,
and let $U\subset \Om$ be an admissible subdomain for $(u,K)$ according to Definition~\ref{def:partreg}.
We say that $(\Phi_t)_{t\in(-1,1)}$ is an
{\em admissible flow for $\Gamma$ in $U$}
if the following properties are satisfied:
\begin{itemize}
\item[(i)] the map $(t,x)\mapsto \Phi_t(x)$ belongs to $C^{\infty}((-1,1){\times}\overline U; \overline U)$;
\item[(ii)] for every $t\in(-1,1)$ the map $\Phi_t$ is a diffeomorphism from $\overline U$ onto itself;
\item[(iii)] $\Phi_0$ coincides with the identity map $I$ in $\overline U$;
\item[(iv)] there exists a compact set $G\subset U\setminus(K\setminus \Gamma)$ such 
that $\supp(\Phi_t-I)\subset G$  for every $t\in(-1,1)$.
\end{itemize}
\end{definition}

\begin{remark}
Condition (iv) in the previous definition implies that 
$\Phi_t$ can affect $\Gamma$ only, while   
$(K\cap U)\setmeno \Gamma$ remains unchanged. We also remark that
from the assumptions 
$\Gamma$ has positive distance from $\Gamma_s\cup \partial\Om$, where 
singular behaviour of the function $u$ can occur. 
\end{remark} 
 
Finally, we describe the variation of $u$ associated with an 
admissible variation of its discontinuity set $K$.
Let $\Om$, $(u,K)$, and $U$ be as in Definition~\ref{flux}.
Given a diffeomorphism $\Phi\in C^{\infty}(\overline U; \overline U)$,
satisfying condition (iv) (with $\Phi_t$ replaced by $\Phi$), we define
$u_{\Phi}$ as the (unique) solution of 
\begin{equation}\label{uphi}
\begin{cases}
u_{\Phi}-\tilde u\in  L^{1,2}_0(U\setmeno K_{\Phi}; \partial U),
\smallskip\\
\displaystyle 
\int_{U}\nabla u_{\Phi}{\,\cdot\,}\nabla z\, dx=0\quad 
\text{for every }z\in L^{1,2}_0(U\setmeno K_{\Phi}; \partial U),
\end{cases}
\end{equation}
where $\tilde u:=\tilde\varphi\, u$ and $\tilde \varphi$ is
a cut-off function such that $\tilde\varphi=0$ on $G$ and $\tilde\varphi=1$ 
in a neighbourhood of $\partial U$. In particular, $u_\Phi=u$ $\hn$-a.e.\ on $\partial U$.

We are now ready to define our notion of second variation.
 
\begin{definition}\label{def:varsec}
Let $\Om$, $(u,K)$, $U$, $\Gamma$, and $(\Phi_t)$ be as in Definition~\ref{flux}. 
We define the {\em second variation of $F$ at $(u, K)$ in $U$ along the flow $(\Phi_t)$} to be the value of 
\begin{equation}\label{eq:varsec}
\frac{d^2}{dt^2}F((u_{\Phi_t}, K_{\Phi_t});U)|_{t=0},
\end{equation}
where $K_{\Phi_t}:=\Phi_t(K\cap U)$ and $u_{\Phi_t}$ is defined by \eqref{uphi} with $\Phi$ replaced by $\Phi_t$. 
\end{definition}

We point out that the existence of the derivative \eqref{eq:varsec}
is guaranteed by the regularity results of Section~\ref{appendix}.

We fix now some notation which will be repeatedly used in the following
discussion. 
For any one-parameter family of function $(g_s)_{s\in(-1,1)}$ the symbol
$\dot g_t(x)$ will denote the partial derivative with respect to $s$
of the map $(s,x)\mapsto g_s(x)$ evaluated at $(t,x)$.
To be more specific, let $\Om$, $(u,K)$, $U$, $\Gamma$, and $(\Phi_t)$ be as in 
the previous definition. For every $t\in (-1,1)$ we set
$$
X_{\Phi_t}:=\dot{\Phi}_t\circ\Phi_t^{-1}, \qquad Z_{\Phi_t}:=\ddot{\Phi}_t\circ\Phi_t^{-1},
$$
where, according to the previous notation,
$$
\dot{\Phi}_t:=\frac{\partial}{\partial s}{\Phi}_s|_{s=t},
\qquad
\ddot{\Phi}_t:=\frac{\partial^2}{\partial s^2}{\Phi}_s|_{s=t}.
$$
Similarly, for every $t\in (-1,1)$ we define $\dot u_{\Phi_t}$ as
the partial derivative with respect to $s$ of the map $(s,x)\mapsto
u_{\Phi_s}(x)$ evaluated at $(t,x)$. 
Proposition~\ref{prop:reg} in the appendix guarantees that the derivative exists and
that $\dot u_{\Phi_t}\in L^{1,2}_0(U\setmeno K_{\Phi_t}; \partial U)$.
We shall often omit the subscript when $t=0$; in particular, we set
\begin{equation}\label{noT}
\dot u:=\dot u_{\Phi_0},\qquad X:=X_{\Phi_0}, \qquad Z:=Z_{\Phi_0}.
\end{equation}
We define $X^{\parallel}$ as the orthogonal projection of $X$ onto the tangent space
to $\Gamma$, that is, $X^{\parallel}:=(I-\nu\otimes\nu)X$.
Finally, for any function $z\in L^{1,2}(\Om\setmeno K)$ we denote 
the traces of $z$ on the two sides of $\Gamma$ by $z^+$ and $z^-$. More precisely, for $\hn$-a.e.\ 
$x\in\Gamma$ we set 
$$
z^{\pm}(x):=\lim_{r\to 0}\tfrac{1}{\LL^N(B_r(x)\cap V^{\pm}_x)}
\int_{B_r(x)\cap V^{\pm}_x} z(y)\, dy,
$$
where $\LL^N$ is the $N$-dimensional Lebesgue measure, $B_r(x)$ is the open ball of radius $r$ centered at $x$, and $V^{\pm}_x:=\{y\in\R^N:\ \pm (y-x){\,\cdot\,}\nu(x)\geq 0\}$.

In the next theorem, which is the main result of the section,
we compute the second variation of $F$, according to Definition~\ref{def:varsec}.
We refer to Section~\ref{prel} for the definition of all geometrical quantities appearing in the statement. 
 
\begin{theorem}\label{th:var2}
Let $\Om$, $(u,K)$, $U$, $\Gamma$, and $(\Phi_t)$ be as in Definition~\ref{flux}. Then
the function $\dot{u}$ belongs to $L^{1,2}_0(U\setmeno K; \partial U)$ and satisfies
the equation
\begin{equation} \label{upto-solves}
\int_U \nabla \dot{u}{\,\cdot\,} \nabla z\, dx+\int_{\Gam}\big[\div_{\Gamma} 
( (X {\,\cdot\,} \nu) \nabla_{\Gamma} u^+  )z^+-\div_{\Gamma} 
( (X {\,\cdot\,} \nu) \nabla_{\Gamma} u^-  )z^-\big]\, d\hn=0
\end{equation}
for all $z\in L^{1,2}_0(U\setmeno K; \partial U)$.
Moreover, 
the second variation of $F$ at $(u, K)$ in $U$ along the flow $(\Phi_t)$ is given by
\begin{eqnarray}
& \displaystyle
\frac{d^2}{dt^2}F((u_{\Phi_t}, K_{\Phi_t});U)|_{t=0}
= 2 \int_{\Gam} ( \dot{u}^+
\partial_{\nu} \dot{u}^+ 
- \dot{u}^- \partial_{\nu} \dot{u}^- ) \, d\hn
+ \int_{\Gam} |\nabla_{\G} 
( X{\,\cdot\,} \nu ) |^2\, d\hn
\nonumber \\
& \displaystyle \hphantom{\int_\G(X {\,\cdot\,} \nu)^2}
{}+\int_{\Gam} ( X {\,\cdot\,} \nu)^2 
(  2 \B[\nabla_{\Gamma} u^+, \nabla_{\Gamma} u^+] 
- 2 \B[\nabla_{\Gamma} u^-, \nabla_{\Gamma} u^-]
- |\B |^2)\,  d\hn \label{fvar2}
\\
& \displaystyle \hphantom{\int_\G(X {\,\cdot\,} \nu)^2}
{}+\int_{\Gam}  f ( Z {\,\cdot\,} \nu 
-2 X^{\parallel} {\,\cdot\,} 
\nabla_{\Gamma} ( X {\,\cdot\,} \nu )
+\B[X^{\parallel}, X^{\parallel}]
+H( X {\,\cdot\,} \nu)^2)\, d\hn, \nonumber
\end{eqnarray}
where $f:=| \nabla_{\Gamma} u^- |^2 
- | \nabla_{\Gamma} u^+ |^2 + H$.
\end{theorem}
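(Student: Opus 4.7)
The plan is to pull everything back to the fixed reference configuration via $\Phi_t$. Set $v_t := u_{\Phi_t}\circ\Phi_t \in L^{1,2}(U\setmeno K)$; since $\Phi_t = I$ on $\partial U$, one has $v_t = u$ on $\partial U$ for every $t$, and $v_t$ is smooth on each side of $\Gamma$. A standard change of variable in \eqref{uphi} yields
\[
\int_U M_t\,\nabla v_t\cdot\nabla z\,dx = 0 \qquad \text{for every } z\in L^{1,2}_0(U\setmeno K;\partial U),
\]
where $M_t := (\det D\Phi_t)(D\Phi_t)^{-1}(D\Phi_t)^{-T}$, so $M_0 = I$ and $\dot M = (\div X)I - DX - (DX)^T$. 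By the area formula \eqref{farea}, $\hn(K_{\Phi_t}\cap U) = \int_\Gamma J_{\Phi_t}\,d\hn + {\rm const}$, since $\Phi_t = I$ on $(K\cap U)\setmeno \Gamma$. The regularity results of Section~\ref{appendix} (Proposition~\ref{prop:reg}) ensure that $t\mapsto v_t$ is of class $C^2$ with values in a suitable subspace of $L^{1,2}(U\setmeno K)$, and the piecewise chain rule gives $\dot v = \dot u + X\cdot\nabla u$ on each side of $\Gamma$, with $\dot v\in L^{1,2}_0(U\setmeno K;\partial U)$.

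\emph{Step 1: derivation of \eqref{upto-solves}.} Differentiating the pulled-back equation at $t=0$ and inserting $\dot v = \dot u + X\cdot\nabla u$ produces
\[
\int_U\nabla\dot u\cdot\nabla z\,dx = -\int_U\dot M\,\nabla u\cdot\nabla z\,dx - \int_U\nabla(X\cdot\nabla u)\cdot\nabla z\,dx,
\]
both right-hand integrals being understood piecewise on $U^+$ and $U^-$. I would then integrate these two integrals by parts on each side of $\Gamma$, invoking $\Delta u = 0$ and $\partial_\nu u^\pm = 0$ (both consequences of \eqref{uequiv}): all bulk contributions cancel and only surface integrals on $\Gamma$ survive. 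Decomposing $X = X^\parallel + (X\cdot\nu)\nu$ and applying the identities $\div g = \divt g + \nu\cdot\partial_\nu g$, the tangential Leibniz rule, and the surface integration-by-parts formula \eqref{fparts}, the tangential part $X^\parallel$ yields no net contribution (it is an intrinsic reparameterization of $\Gamma$), and the surviving terms collapse exactly to $\mp\divt((X\cdot\nu)\nablat u^\pm)z^\pm$, giving \eqref{upto-solves}. In particular, $\dot u$ is harmonic in $U\setmeno K$ with Neumann data $\partial_\nu \dot u^\pm = \divt((X\cdot\nu)\nablat u^\pm)$ on $\Gamma$.

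\emph{Step 2: second variation.} Write $F((u_{\Phi_t},K_{\Phi_t});U) = E(t) + P(t) + {\rm const}$ with $E(t) := \int_U M_t\,\nabla v_t\cdot\nabla v_t\,dx$ and $P(t) := \int_\Gamma J_{\Phi_t}\,d\hn$. The Leibniz rule gives
\[
\ddot E(0) = \int_U\ddot M\,\nabla u\cdot\nabla u\,dx + 4\int_U\dot M\,\nabla u\cdot\nabla\dot v\,dx + 2\int_U|\nabla\dot v|^2\,dx + 2\int_U\nabla u\cdot\nabla\ddot v\,dx,
\]
and the last integral vanishes because $\ddot v\in L^{1,2}_0(U\setmeno K;\partial U)$ and \eqref{uequiv} applies. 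Expanding $\dot v = \dot u + X\cdot\nabla u$ and testing \eqref{upto-solves} against both $z = \dot u$ and $z = X\cdot\nabla u$ (both admissible, as $X$ vanishes near $\partial U$), the surviving bulk integrals are transferred onto $\Gamma$; together with the Neumann identity for $\dot u$ this produces the leading term $2\int_\Gamma(\dot u^+\partial_\nu\dot u^+ - \dot u^-\partial_\nu\dot u^-)\,d\hn$ in \eqref{fvar2}. The interaction of $D^2 u$ with $X\otimes X$ in the pure $(X,u)$-terms, combined with the Weingarten relation $D\nu = \B$, yields the curvature contribution $(X\cdot\nu)^2(2\B[\nablat u^+,\nablat u^+] - 2\B[\nablat u^-,\nablat u^-])$. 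For the perimeter, a standard shape-calculus expansion of $J_{\Phi_t} = |(D\Phi_t)^{-T}\nu|\det D\Phi_t$ to second order (or, equivalently, iterating \eqref{fdiv}) gives
\[
\ddot P(0) = \int_\Gamma\bigl[|\nablat(X\cdot\nu)|^2 - (X\cdot\nu)^2|\B|^2 + H\bigl(Z\cdot\nu - 2X^\parallel\cdot\nablat(X\cdot\nu) + \B[X^\parallel,X^\parallel] + H(X\cdot\nu)^2\bigr)\bigr]\,d\hn.
\]
Summing $\ddot E(0) + \ddot P(0)$, the residual $(Z, X^\parallel)$-terms inherited from $\ddot E(0)$ are bilinear in $\nablat u^\pm$ and combine with the $H$-multiple in $\ddot P(0)$ to yield the single factor $f = |\nablat u^-|^2 - |\nablat u^+|^2 + H$ multiplying the canonical shape-calculus expression; all remaining terms assemble into \eqref{fvar2}.

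\emph{Main obstacle.} The heart of the difficulty lies in the bookkeeping of Steps~1 and 2: keeping track of jumps across $\Gamma$, verifying that every bulk integral collapses after the combined use of $\Delta u = 0$ and $\partial_\nu u^\pm = 0$, and correctly decoupling the tangential component $X^\parallel$ (which should only contribute through the Euler-Lagrange factor $f$) from the normal component $X\cdot\nu$ (which drives the genuine second variation). The subtlest point is recognizing that the whole $(Z, X^\parallel)$-contribution reassembles into the expression $Z\cdot\nu - 2X^\parallel\cdot\nablat(X\cdot\nu) + \B[X^\parallel,X^\parallel] + H(X\cdot\nu)^2$ multiplied exactly by $f$, so that at a critical point — where the transmission condition \eqref{trans} forces $f = 0$ — this nuisance term automatically vanishes, as one expects from general shape-calculus principles.
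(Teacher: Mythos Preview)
Your outline is a valid approach, but it follows a different route from the paper. You work entirely in the pulled-back picture on the fixed domain, writing $E(t)=\int_U M_t\nabla v_t\cdot\nabla v_t$ and differentiating twice via Leibniz, so that $\ddot M$, the cross term $\dot M\nabla u\cdot\nabla\dot v$, and the material derivative $\dot v=\dot u+X\cdot\nabla u$ all appear and must eventually be pushed onto $\Gamma$ by integration by parts. The paper instead first computes the \emph{first} variation at general $t$ on the moving surface, obtaining the single formula
\[
\frac{d}{dt}F((u_{\Phi_t},K_{\Phi_t});U)=\int_{\Gamma_{\Phi_t}} f_t\,(X_{\Phi_t}\cdot\nu_{\Phi_t})\,d\hn,
\qquad f_t=|\nabla u^-_{\Phi_t}|^2-|\nabla u^+_{\Phi_t}|^2+H_{\Phi_t},
\]
and then differentiates this surface integral once more at $t=0$ after pulling it back to $\Gamma$ by the area formula. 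Similarly, for \eqref{upto-solves} the paper does not integrate the bulk weak form by parts: it differentiates the pointwise Neumann condition $(\nabla u^\pm_{\Phi_t}\circ\Phi_t)\cdot(D\Phi_t)^{-T}\nu=0$ directly to get $\partial_\nu\dot u^\pm$, and combines this with harmonicity away from $\Gamma$ (obtained by testing against $z$ supported off $\overline\Gamma$).

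The trade-off is this. In the paper's approach the factor $f$ is already the integrand of the first variation, so it appears for free in the second variation and the $\ddot M$ term never arises; the price is a package of geometric identities (Lemma~\ref{lem:identita}~(a)--(g)) controlling $\dot\nu$, $\dot H$, $\partial_\nu H$, $\nabla^2 u[\nu,\cdot]$, and $\frac{\partial}{\partial t}(\dot\Phi_t\cdot(\nu_{\Phi_t}\circ\Phi_t)J_{\Phi_t})$. In your approach you avoid those lemmas but must instead handle $\ddot M$ and reassemble the bulk contributions from $\ddot E(0)$ with the area contributions from $\ddot P(0)$ so that the $(Z,X^\parallel)$-terms acquire exactly the common factor $f$; you correctly flag this reassembly as the heart of the computation, and it is where most of the effort would go in a full write-up.
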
 

\begin{remark}\label{rm:upto}
The first part of the previous theorem implies that
$\dot{u}$ is harmonic in $U\setmeno K$, $\dot{u}=0$ on $\partial U$,
$\partial_{\nu}\dot{u}^\pm=\div_{\Gamma}((X {\,\cdot\,} \nu) \nabla_{\Gamma} u^\pm)$
on $\Gam$, and $\dot{u}$ satisfies a weak homogeneous Neumann condition on $K\cap U\setmeno\Gamma$.
In particular, using $\dot u$ as a test function in \eqref{upto-solves}, we have 
$$
\int_{\Gam} ( \dot{u}^+\partial_{\nu} \dot{u}^+ 
- \dot{u}^- \partial_{\nu} \dot{u}^- ) \, d\hn
=-\int_U|\nabla \dot u|^2\, dx.
$$
\end{remark}

The following lemma contains some useful identities, which will be repeatedly used in the proof
of Theorem~\ref{th:var2}. The proof of the lemma is postponed until Section~\ref{appendix}.

\begin{lemma}\label{lem:identita}
The following identities are satisfied on $\Gamma$:
\smallskip
\begin{itemize}
\item[(a)] $\nabla^2 u^\pm [ \nu , \nu ] = 
- \Delta_{\Gamma} u^\pm$;
\medskip
\item[(b)] $\nabla^2 u^\pm [X, \nu] = 
-( X {\,\cdot\,} \nu ) \Delta_{\Gamma} u^\pm
-\B[ \nabla_{\Gamma} u^\pm , X ]$;
\medskip
\item[(c)] $\div_{\Gamma}[ ( X {\,\cdot\,} \nu ) \nabla_{\Gamma} u^\pm] =
( D_{\Gamma} X )^T [ \nu , \nabla_{\Gamma} u^\pm ]
- \nabla^2 u^\pm [ X , \nu ]$;
\medskip
\item[(d)] $\partial_{\nu}H = -|\B|^2$;
\medskip
\item[(e)] $\nabla^2 u^{\pm} [ \nu, \nabla_{\Gamma} u^{\pm} ] 
= -\B[ \nabla_{\Gamma} u^{\pm} , \nabla_{\Gamma} u^{\pm}]$; 
\medskip
\item[(f)] $\dot{\nu} =  -(D_{\Gamma}X)^T [ \nu] -D_\Gamma\nu[ X ]$;
\smallskip
\item [(g)] $\displaystyle \frac{\partial}{\partial t}\big( 
\dot{\Phi}_{t} {\,\cdot\,} ( \nu_{\Phi_{t}} \circ \Phi_{t} ) \, J_{\Phi_{t}}  
\big)|_{t=0}
= Z {\,\cdot\,} \nu 
-2 X^{\parallel} {\,\cdot\,} 
\nabla_{\Gamma} ( X{\,\cdot\,} \nu ) 
+ \B[X^{\parallel}, X^{\parallel}] 
+ \div_{\Gamma} \big( (X {\,\cdot\,} \nu) X \big)$.
\end{itemize}
\smallskip
\end{lemma}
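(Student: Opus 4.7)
The plan is to transport the moving problem back to the fixed domain $U\setmeno K$ via the change of variables $x\mapsto\Phi_t(x)$. Setting $v_t:=u_{\Phi_t}\circ\Phi_t$, one has $v_t-\tilde u\in L^{1,2}_0(U\setmeno K;\partial U)$ for every $t$ (since $\Phi_t$ is the identity near $\partial U$), and the substitution $y=\Phi_t(x)$ in \eqref{uphi} yields the transported weak formulation
\[
\int_U M_t\nabla v_t{\,\cdot\,}\nabla w\,dx=0\quad\forall\,w\in L^{1,2}_0(U\setmeno K;\partial U),\qquad M_t:=(D\Phi_t)^{-1}(D\Phi_t)^{-T}\det D\Phi_t.
\]
This is the uniform device for both halves of the theorem: it replaces a Dirichlet energy on a $t$-dependent domain by one on a fixed domain with $t$-dependent coefficients, and the regularity results of Section~\ref{appendix} allow one to differentiate freely.

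For the equation satisfied by $\dot u$, I differentiate the transported formulation at $t=0$, using $\dot M_0=-DX-(DX)^T+(\div X)I$, and obtain an equation for $\dot v_0\in L^{1,2}_0(U\setmeno K;\partial U)$. Writing $\dot u=\dot v_0-\nabla u{\,\cdot\,}X$, the terms recombine into $\int_U\nabla\dot u{\,\cdot\,}\nabla w\,dx=\int_U T{\,\cdot\,}\nabla w\,dx$ with $T:=-(\nabla^2 u)X+(DX)\nabla u-(\div X)\nabla u$. A direct calculation exploiting $\Delta u=0$ shows $\div T=0$ in $U\setmeno K$, so integrating $T{\,\cdot\,}\nabla w$ by parts leaves only boundary contributions. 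The one on $\partial U$ vanishes because $w=0$; the one on $(K\cap U)\setmeno\Gamma$ vanishes because $X$ (and hence $DX$) is zero there by condition (iv) of Definition~\ref{flux}. On $\Gamma$, the Neumann condition $\partial_\nu u=0$ reduces $T{\,\cdot\,}\nu^\pm$ to $-\nabla^2 u^\pm[X,\nu]+(D_\G X)^T[\nu,\nablat u^\pm]$, which by identity (c) of Lemma~\ref{lem:identita} equals $\divt((X{\,\cdot\,}\nu)\nablat u^\pm)$. Accounting for the signs of the outward normals on the two sides of $\Gamma$ yields \eqref{upto-solves}.

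For the second variation, I split $F=I_V+I_S$. In the transported form $I_V(t)=\int_U M_t\nabla v_t{\,\cdot\,}\nabla v_t\,dx$, direct differentiation gives
\[
I_V''(0)=2\int_U\nabla\ddot v_0{\,\cdot\,}\nabla u\,dx+4\int_U\dot M_0\nabla u{\,\cdot\,}\nabla\dot v_0\,dx+2\int_U|\nabla\dot v_0|^2\,dx+\int_U\nabla u{\,\cdot\,}\ddot M_0\nabla u\,dx.
\]
Since both $\dot v_0$ and $\ddot v_0$ lie in $L^{1,2}_0(U\setmeno K;\partial U)$, the first term vanishes by the minimality of $u$ (test \eqref{uequiv} with $z=\ddot v_0$), and the first-order equation for $\dot v_0$ with test $w=\dot v_0$ converts $2\int|\nabla\dot v_0|^2$ into $-2\int\dot M_0\nabla u{\,\cdot\,}\nabla\dot v_0$. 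Substituting $\dot v_0=\dot u+\nabla u{\,\cdot\,}X$ and integrating the residual bulk terms by parts (using $\Delta u=0$ and $\partial_\nu u=0$ on $\Gamma$), $I_V''(0)$ collapses to $-2\int_U|\nabla\dot u|^2\,dx$—equivalently $2\int_{\Gam}(\dot u^+\partial_\nu\dot u^+-\dot u^-\partial_\nu\dot u^-)\,d\hn$ by testing \eqref{upto-solves} with $\dot u$ itself—plus surface integrals quadratic in the components of $X$. For the surface part, the area formula \eqref{farea} gives $I_S(t)=\int_\Gamma J_{\Phi_t}\,d\hn$, so $I_S''(0)=\int_\Gamma\ddot J_{\Phi_0}\,d\hn$, and identity (g) of Lemma~\ref{lem:identita} (supplemented by (d)--(f)) identifies $\ddot J_{\Phi_0}$ up to a tangential divergence; applying \eqref{fdiv} to the latter produces the mean-curvature factor $H(X{\,\cdot\,}\nu)^2$ and pairs with the volume contributions to form the coefficient $f=|\nablat u^-|^2-|\nablat u^+|^2+H$ appearing in the last integral of \eqref{fvar2}.

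The main obstacle is this final reconciliation. Separately $I_V''(0)$ and $I_S''(0)$ contain numerous non-geometric quantities (normal second derivatives of $u$, derivatives of $DX$ and $\ddot M_0$, time derivatives of $\nu_{\Phi_t}$), which collapse into the three compact surface integrals of \eqref{fvar2} only after systematic use of Lemma~\ref{lem:identita}: (a), (b), (e) are required to trade normal second derivatives of $u$ against $\Deltat u^\pm$ and contractions of the second fundamental form, while (d), (f) control the variation of $\nu$ and of $H$. A careful decomposition $X=X^{\|}+(X{\,\cdot\,}\nu)\nu$ is essential, and one must verify along the way that the final expression depends on the flow only through $X{\,\cdot\,}\nu$, $X^{\|}$, and $Z{\,\cdot\,}\nu$, a structural feature of the formula that provides a useful consistency check on the bookkeeping.
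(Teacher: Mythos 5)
Your proposal does not prove the statement in question. The statement is Lemma~\ref{lem:identita}, i.e.\ the list of pointwise identities (a)--(g) on $\Gamma$, whereas what you have written is a strategy for Theorem~\ref{th:var2} (the derivation of the equation for $\dot u$ and of the second variation formula \eqref{fvar2}). Indeed, your argument explicitly \emph{invokes} the lemma at the key steps: you use identity (c) to identify the boundary term on $\Gamma$ as $\divt((X{\,\cdot\,}\nu)\nablat u^\pm)$, and you appeal to (a), (b), (d), (e), (f), (g) to collapse the surface terms into the final formula. As a proof of the lemma itself this is circular: none of the identities (a)--(g) is ever established.

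What is actually needed is a set of direct differential-geometric computations, of a quite different nature from the transport-and-differentiate scheme you describe. For (a) one writes $\Delta_\Gamma u^\pm=\sum_i\partial_{\tau_i}(\partial_{\tau_i}u^\pm)$ in a moving orthonormal tangent frame, uses $\partial_\nu u^\pm=0$ on $\Gamma$ to discard the normal components of $\partial_{\tau_i}\tau_i$, and concludes from harmonicity that $\nabla^2u^\pm[\nu,\nu]=\Delta u^\pm-\sum_i\nabla^2u^\pm[\tau_i,\tau_i]=-\Delta_\Gamma u^\pm$. Identity (b) follows by differentiating the Neumann condition $\nabla u^\pm{\,\cdot\,}\nu=0$ tangentially, using $\partial_{\tau_i}\nu=\B\tau_i$ and the symmetry of $\B$, and then decomposing $X=X^{\parallel}+(X{\,\cdot\,}\nu)\nu$ together with (a); (e) is (b) with $X$ replaced by $\nablat u^\pm$, and (c) is (b) combined with the product rule for $\divt$. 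Identity (d) requires the Riccati equation $\partial_\nu(D\nu)=-(D\nu)^2$ satisfied by the Hessian of the signed distance function, which is an input from outside (the paper cites it); it does not follow from anything in your outline. Finally (f) and (g) require computing $\frac{\partial}{\partial t}(\nu_{\Phi_t}\circ\Phi_t)|_{t=0}$ from the explicit formula $\nu_{\Phi_t}\circ\Phi_t=w_t/|w_t|$ with $w_t=(D\Phi_t)^{-T}[\nu]$, giving $\dot w=-(DX)^T[\nu]$, together with $\frac{\partial}{\partial t}J_{\Phi_t}|_{t=0}=\divt X$ and the auxiliary identity $DX[X^{\parallel},\nu]=-\B[X^{\parallel},X^{\parallel}]+X^{\parallel}{\,\cdot\,}\nablat(X{\,\cdot\,}\nu)$. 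None of these computations is present, or replaceable by the weak-formulation machinery you set up, so the proposal leaves the entire content of the lemma unproved.
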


We will also need the following well-known result on the first variation of the area
functional (for the definition of $\G_{\Phi_{t}}$ and $H_{\Phi_{t}}$ we refer to Section~\ref{prel}). 

\begin{proposition}[see \cite{Sim}]
The first variation for the area functional is given by
\begin{equation} \label{1varareat}
\frac{d}{dt} \mathcal{H}^{N-1}(\G_{\Phi_{t}})  =
\int_{\G_{\Phi_{t}}} H_{\Phi_{t}} ( X_{\Phi_{t}} {\,\cdot\,} \nu_{\Phi_{t}} ) \, d \mathcal{H}^{N-1}.
\end{equation}
\end{proposition}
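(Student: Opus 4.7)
The plan is to reduce everything to the case $t=0$ and then combine the generalized area formula \eqref{farea} with the divergence formula \eqref{fdiv}. By considering the flow $\Psi_s:=\Phi_{t+s}\circ\Phi_t^{-1}$, which starts at the identity on $\overline{U}$ at $s=0$ and whose initial velocity is $\dot\Psi_0=\dot\Phi_t\circ\Phi_t^{-1}=X_{\Phi_t}$, it suffices to prove that
\[
\frac{d}{ds}\mathcal{H}^{N-1}(\Psi_s(\G_{\Phi_t}))\Big|_{s=0}=\int_{\G_{\Phi_t}}H_{\Phi_t}\,(X_{\Phi_t}\!\cdot\!\nu_{\Phi_t})\,d\mathcal{H}^{N-1};
\]
that is, the whole formula follows from the special case in which we differentiate at the starting time. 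I would therefore restrict to proving
$\frac{d}{dt}\mathcal{H}^{N-1}(\G_{\Phi_t})|_{t=0}=\int_{\G}H\,(X\!\cdot\!\nu)\,d\mathcal{H}^{N-1}$.

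First I would pull back the area integral to the fixed manifold $\G$ via \eqref{farea}, obtaining
\[
\mathcal{H}^{N-1}(\G_{\Phi_t})=\int_{\G}J_{\Phi_t}\,d\mathcal{H}^{N-1},\qquad J_{\Phi_t}=|(D\Phi_t)^{-T}[\nu]|\,\det D\Phi_t.
\]
Since the map $(t,x)\mapsto \Phi_t(x)$ is smooth on $(-1,1)\times\overline{U}$ and $J_{\Phi_t}$ is bounded away from $0$ and $\infty$ uniformly in $(t,x)$ on the support of $\Phi_t-I$, I can differentiate under the integral sign and reduce to computing $\dot J_{\Phi_0}$ pointwise on $\G$. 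At $t=0$, using Jacobi's formula together with $D\dot\Phi_0=DX$, I get $\frac{d}{dt}\det D\Phi_t|_{t=0}=\operatorname{tr}(DX)=\div X$. For the other factor, writing $A(t):=(D\Phi_t)^{-T}$, one has $A(0)=I$ and $\dot A(0)=-(DX)^T$, hence
\[
\frac{d}{dt}|(D\Phi_t)^{-T}[\nu]|\,\Big|_{t=0}
=\frac{\nu\!\cdot\!\dot A(0)[\nu]}{|\nu|}
=-(DX)[\nu]\!\cdot\!\nu
=-\partial_\nu X\!\cdot\!\nu.
\]
Combining these two contributions yields $\dot J_{\Phi_0}=\div X-\nu\!\cdot\!\partial_\nu X$, and the identity $\div g=\div_{\G}g+\nu\!\cdot\!\partial_\nu g$ from the preliminaries simplifies this to $\dot J_{\Phi_0}=\div_{\G}X$.

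It then remains to invoke the divergence formula \eqref{fdiv} applied to the smooth field $X$ (which by hypothesis (iv) in Definition~\ref{flux} is compactly supported away from $\partial\G$ inside $U$), yielding
\[
\frac{d}{dt}\mathcal{H}^{N-1}(\G_{\Phi_t})\Big|_{t=0}=\int_{\G}\div_{\G}X\,d\mathcal{H}^{N-1}=\int_{\G}H\,(X\!\cdot\!\nu)\,d\mathcal{H}^{N-1},
\]
which together with the reduction above gives \eqref{1varareat}. I do not anticipate a substantial obstacle: the only slightly delicate point is the computation of the derivative of $|(D\Phi_t)^{-T}[\nu]|$ at $t=0$, and once that is combined with Jacobi's formula and the decomposition of the full divergence into its tangential and normal parts, the conclusion is immediate from \eqref{fdiv}.
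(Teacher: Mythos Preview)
Your argument is correct. The paper does not supply its own proof of this proposition: it simply records the formula as a known result and cites Simon's lecture notes. Your derivation is in fact the standard one, and the central identity you obtain, $\dot J_{\Phi_0}=\div_{\Gamma}X$, is precisely what the paper invokes later in the proof of Lemma~\ref{lem:identita}(g), there with a reference to \cite[Lemma~2.49]{Zo} rather than a direct computation. So your proof fills in what the paper takes for granted, and does so along exactly the expected lines: area formula, Jacobi's formula plus the derivative of $|(D\Phi_t)^{-T}[\nu]|$, the decomposition $\div=\div_\Gamma+\nu\cdot\partial_\nu$, and finally the tangential divergence theorem~\eqref{fdiv}.
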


We are now in a position to prove Theorem~\ref{th:var2}.

\begin{proof}[Proof of Theorem~\ref{th:var2}]
We split the proof into three steps.
\medskip

\noindent
{\it Step 1. Derivation of the equation solved by $\dot u$.\/}
By Proposition~\ref{prop:reg} we have that 
$\dot u\in L^{1,2}_0(U\setmeno K; \partial U)$.
Let $z\in L^{1,2}_0(U\setmeno K; \partial U)$ with $\supp z\cap \overline \Gamma=\varnothing$. 
Then, $\supp z\subset U\setmeno K_{\Phi_{t}}$ for $t$ small enough, so that, in particular, 
$z\in L^{1,2}_0(U\setmeno K_{\Phi_{t}}; \partial U)$. Hence, by \eqref{uphi} we have 
$\int_U\nabla u_{\Phi_{t}}{\,\cdot\,}\nabla z\, dx=0$. Differentiating with respect to $t$, we deduce  
\begin{equation}\label{z2}
\int_U\nabla \dot{u}{\,\cdot\,}\nabla z\, dx=0\qquad\text{for every }
z\in L^{1,2}_0(U\setmeno K; \partial U) \text{ with }\supp z\cap \overline \Gamma=\varnothing.
\end{equation}

Note that by \eqref{nutfit0} one has
\begin{equation}\label{nutfit}
\nu_{\Phi_{t}} \circ\Phi_{t} 
= \frac{(D\Phi_{t} )^{-T}  [\nu]}{|(D\Phi_{t} )^{-T} [\nu]|} \quad\text{on }\Gamma.
\end{equation}
It is convenient to set $w_{t}:=(D\Phi_{t} )^{-T}  [\nu]$ (as usual, we shall omit the subscript $t$
when $t=0$). 
As $\partial_{\nu_{\Phi_{t}}}u^\pm_{\Phi_{t}}=0$ on $\Gamma_{\Phi_{t}}$ by \eqref{uphi}, 
we have $(\nabla u^\pm_{\Phi_{t}} \circ\Phi_{t} ){\,\cdot\,}(\nu_{\Phi_{t}} \circ\Phi_{t} )=0$
and in turn, using \eqref{nutfit},   
\begin{equation}\label{inturn}
(\nabla u^\pm_{\Phi_{t}} \circ\Phi_{t} ){\,\cdot\,} w_{t}=0 \qquad\text{on }\Gamma.
\end{equation}
Differentiating \eqref{inturn} with respect to $t$ at $t=0$ and using the fact that
$\dot w=-(DX)^T[\nu]$ on $\Gamma$, we obtain  
$$
\partial_{\nu}\dot{u}^\pm=-\nabla^2u^\pm[X,\nu]+(DX)^T[\nu,\nabla u^\pm]=
-\nabla^2u^\pm[X,\nu]+(D_{\Gamma}X)^T[\nu,\nabla_{\Gamma} u^\pm],
$$
where in the last equality we used that $\nabla u^\pm=\nabla_{\Gamma}u^\pm$ on $\Gamma$. 
By (c) of Lemma~\ref{lem:identita} we conclude that
\begin{equation}\label{z1}
\partial_{\nu} \dot{u}^\pm =\div_{\Gamma} [ (X {\,\cdot\,} \nu) \nabla_{\Gamma}u^\pm ]\qquad
\text{on $\Gamma$.}
\end{equation}

Now let $z\in L^{1,2}_0(U\setmeno K; \partial U)$. We can write $z=z_1+z_2$, where 
$\supp z_1\subset\subset U$ and $\supp z_1\cap \Gamma_s=\varnothing$, while $\supp z_2\cap 
\overline \Gamma=\varnothing$. Then, by \eqref{z2} and \eqref{z1} we finally obtain
$$
\int_U \nabla \dot{u}{\,\cdot\,} \nabla z\, dx=\int_U \nabla \dot{u}{\,\cdot\,} \nabla z_1\, dx=\int_{\Gam}\big[\div_{\Gamma} 
( (X{\,\cdot\,} \nu) \nabla_{\Gamma} u^-  )z^- -\div_{\Gamma} 
( (X {\,\cdot\,} \nu) \nabla_{\Gamma} u^+ )z^+\big]\, d\hn,
$$
where the last equality follows by integration by parts. This establishes the first part of the statement.
\medskip

\noindent
{\it Step 2. Computation of the first variation.} 
We shall show that 
\begin{equation} \label{1var}
\frac{d}{dt} F((u_{\Phi_{t}},K_{\Phi_{t}});U) 
= \int_{\Gamma_{\Phi_{t}}}(| \nabla_{\Gamma_{\Phi_{t}}} u^-_{\Phi_{t}} |^2 
- | \nabla_{\Gamma_{\Phi_{t}}} u^+_{\Phi_{t}} |^2 + H_{\Phi_{t}})( X_{\Phi_{t}} {\,\cdot\,} \nu_{\Phi_{t}} )
\, d\hn
\end{equation}
for every $t\in (-1, 1)$.

We start by performing a change of variables in the integral, which leads to
$$
\int_{U} |\nabla u_{\Phi_{t}}|^2\, dy = 
\int_{U}  |\nabla u_{\Phi_{t}} \circ \Phi_{t} |^2 
\det D\Phi_{t}\, dx  = \| \sqrt{ \det D\Phi_{t}  }\, 
( \nabla u_{\Phi_{t}} \circ \Phi_{t}) \|^2_{L^2 (U ;\R^N)}.
$$
By the regularity results of Proposition~\ref{prop:reg} and by the identity 
$$
\frac{\partial}{\partial t}(\det D\Phi_t)=(\div\, X_{\Phi_t}\circ \Phi_t)\det D\Phi_t
$$ 
(see \cite[Chapter III, Section~10]{gur} for a proof), we obtain 
\begin{eqnarray*}
\frac{d}{dt} \int_{U} |\nabla u_{\Phi_{t}}|^2\, dy 
& = &
\frac{d}{dt} \|\sqrt{ \det D\Phi_{t}  } \,
( \nabla u_{\Phi_{t}} \circ \Phi_{t}) \|^2_{L^2 (U;\R^N)}
\\
& = &
2\int_U \sqrt{ \det D\Phi_{t}  }
( \nabla u_{\Phi_{t}} \circ \Phi_{t}){\,\cdot\,}\frac{\partial}{\partial t}(
\sqrt{ \det D\Phi_{t}}
( \nabla u_{\Phi_{t}} \circ \Phi_{t}))\, dx
\\
& = &
\int_{U} |\nabla u_{\Phi_{t}} \circ \Phi_{t}|^2 
( \div\, X_{\Phi_{t}} \circ \Phi_{t} )\det D\Phi_{t} \, dx
\\
& & {}+ 2 \int_{U} (\nabla \dot{u}_{\Phi_{t}} \circ \Phi_{t}) 
{\,\cdot\,} (\nabla u_{\Phi_{t}} \circ \Phi_{t}) \det D \Phi_{t} \, dx 
\\
& & {}+ 2 \int_{U}
(\nabla^2 u_{\Phi_{t}} \circ \Phi_{t})[\nabla u_{\Phi_{t}} \circ \Phi_{t},
 \dot{\Phi}_{t} ] \det D\Phi_{t} \, dx 
\\
& = & \int_{U} |\nabla u_{\Phi_{t}} |^2 
 \div\, X_{\Phi_{t}} \, dy
+2 \int_{U} \nabla \dot{u}_{\Phi_{t}}  
{\,\cdot\,} \nabla u_{\Phi_{t}}  \, dy 
+ 2 \int_{U}
\nabla^2 u_{\Phi_{t}}[\nabla u_{\Phi_{t}} , X_{\Phi_{t}} ] \, dy 
\\
& = & \int_{U} 
\div ( |\nabla u_{\Phi_{t}}|^2 X_{\Phi_{t}} ) \, dy 
+ 2 \int_{U} \nabla \dot{u}_{\Phi_{t}}  
{\,\cdot\,} \nabla u_{\Phi_{t}}  \, dy 
= \int_{U} 
\div ( |\nabla u_{\Phi_{t}}|^2 X_{\Phi_{t}} ) \, dy, 
\end{eqnarray*}
where the last equality follows from (\ref{uphi}), since $\dot{u}_{\Phi_{t}}
\in L^{1,2}_0(U\setmeno K_{\Phi_t}; \partial U)$. 
Integrating by parts we deduce
$$
\frac{d}{dt}  \int_{U} |\nabla u_{\Phi_{t}}|^2\, dy = \int_{\G_{\Phi_{t}}} 
( | \nabla_{\G_{\Phi_{t}}} u^-_{\Phi_{t}} |^2 - | \nabla_{\G_{\Phi_{t}}} u^+_{\Phi_{t}} |^2 )
( X_{\Phi_{t}} {\,\cdot\,} \nu_{\Phi_{t}} ) \, d\hn, 
$$
which, together with \eqref{1varareat}, gives \eqref{1var}. 
\medskip

\noindent
{\it Step 3. Computation of the second variation.} 
We are now ready to compute \eqref{fvar2}.
To simplify the notation in the calculations below we set 
$f_{t}:= | \nabla u^-_{\Phi_{t}} |^2 - | \nabla u^+_{\Phi_{t}} |^2
+ H_{\Phi_{t}}$.
Using the fact that $|\nabla_{\Gamma_{\Phi_t}} u^\pm_{\Phi_{t}}|^2
=|\nabla u^\pm_{\Phi_{t}} |^2$ on $\Gamma_{\Phi_t}$, 
the area formula \eqref{farea}, and the identity 
$X_{\Phi_{t}} \circ \Phi_{t} = \dot{\Phi}_{t}$, we find
\begin{eqnarray}
\lefteqn{\frac{d^2}{d t^2} F ((u_{\Phi_{t}} , K_{\Phi_{t}});U)|_{t=0}
=  
\frac{d}{d t} \Big(
\int_{\Gamma_{\Phi_{t}}} f_{t} (X_{\Phi_{t}} {\,\cdot\,} \nu_{\Phi_{t}})
\,d\hn\Big)\Big|_{t=0}} 
\nonumber
\\
& = &
\frac{d}{d t} \Big(
\int_{\Gam}( f_{t} \circ \Phi_{t} ) (\dot{\Phi}_{t} 
{\,\cdot\,} ( \nu_{\Phi_{t}} \circ \Phi_{t} ) )J_{\Phi_{t}} \, d\hn
\Big)\Big|_{t=0}  
\nonumber
\\
& = &
\int_{\Gam} \frac{\partial}{\partial t}(f_{t} \circ \Phi_{t} )|_{t=0}
(X {\,\cdot\,} \nu)
\, d\hn
+ \int_{\Gam} f \frac{\partial}{\partial t}( 
\dot{\Phi}_{t} {\,\cdot\,} ( \nu_{\Phi_{t}} \circ \Phi_{t} ) \, 
J_{\Phi_{t}})|_{t=0}
\,d\hn
\nonumber
\\
& =: & \vphantom{\int} I_1+I_2. \label{int0}
\end{eqnarray}
The first integral $I_1$ can be written as
\begin{eqnarray}
I_1 & = &
\int_{\Gam} \dot{f} ( X {\,\cdot\,} \nu ) \, d\hn
+ \int_{\Gam} ( \nabla f {\,\cdot\,} X ) ( X {\,\cdot\,} \nu ) 
\, d\hn 
\nonumber
\\
& = &
\int_{\Gam} \dot{f} ( X {\,\cdot\,} \nu ) \, d\hn
+ \int_{\Gam} ( \nabla f {\,\cdot\,} \nu ) ( X {\,\cdot\,} \nu )^2 \, d\hn
\nonumber
\\
& &
{}
+ \int_{\Gam} ( \nabla_{\Gamma} f {\,\cdot\,} X^{\parallel} ) ( X {\,\cdot\,} 
\nu )
\, d\hn. \label{integr1}
\end{eqnarray}
By property (g) of Lemma~\ref{lem:identita}  
the second integral $I_2$ turns out to be
\begin{equation}\label{integr2} 
 I_2= \int_{\Gam} f 
( Z {\,\cdot\,} \nu 
-2 X^{\parallel} {\,\cdot\,} 
\nabla_{\Gamma} ( X {\,\cdot\,} \nu )
+ \B[X^{\parallel}, X^{\parallel}]) \,d\hn
+ \int_{\Gam} f \div_{\Gamma}( (X {\,\cdot\,} \nu) X )\,
d\hn. 
\end{equation}
We note that by \eqref{fdiv} we have
\begin{eqnarray*}
\lefteqn{\int_{\Gam} f \div_{\Gamma} ((X {\,\cdot\,} \nu) X)
\, d\hn
+ \int_{\Gam} 
( \nabla_{\Gamma} f {\,\cdot\,} X^{\parallel} ) ( X {\,\cdot\,} \nu ) 
\, d\hn} 
\\
& = & \int_{\Gam} 
\div_{\Gamma} ( f (X {\,\cdot\,} \nu) X) \, d\hn
=  \int_{\Gam} f H (X {\,\cdot\,} \nu)^2 \, d\hn.
\end{eqnarray*}
Combining the previous identity with \eqref{int0}--\eqref{integr2} we obtain
\begin{eqnarray}
\frac{d^2}{d t^2} F ((u_{\Phi_{t}} , K_{\Phi_{t}});U)|_{t=0} 
& = & \int_{\Gam} f 
( Z {\,\cdot\,} \nu 
-2X^{\parallel} {\,\cdot\,} 
\nabla_{\Gamma} ( X {\,\cdot\,} \nu )
+\B[ X^{\parallel}, X^{\parallel}] +H (X {\,\cdot\,} \nu)^2
)\, d\hn \nonumber
\\
& & {}+ \int_{\Gam} \dot{f} ( X {\,\cdot\,} \nu )\, d\hn
+ \int_{\Gam} ( \nabla f {\,\cdot\,} \nu )( X {\,\cdot\,} \nu )^2 \,
d\hn.\label{int1}
\end{eqnarray}
Using the definition of $f$ and properties (d) and (e) of
Lemma~\ref{lem:identita}, the last term in the previous expression
can be written as
\begin{eqnarray}
\lefteqn{\int_{\Gam} ( \nabla f {\,\cdot\,} \nu )( X {\,\cdot\,} \nu )^2 
\,d\hn} \nonumber
\\
& = & \int_{\Gam} ( 2\nabla^2 u^{-} [ \nu , \nabla_{\Gamma} u^{-} ] 
- 2\nabla^2 u^{+} [ \nu , \nabla_{\Gamma} u^{+} ]
+\partial_{\nu} H ) (X {\,\cdot\,} \nu)^2\, d\hn \nonumber\\
& = & \int_{\Gam} (
2 \B [\nabla_{\Gamma} u^{+} , \nabla_{\Gamma} u^{+}]
- 2 \B [\nabla_{\Gamma} u^{-} , \nabla_{\Gamma} u^{-}] - |\B|^2) 
(X {\,\cdot\,} \nu)^2\, d\hn.\label{last}
\end{eqnarray}
Differentiating $f$ with respect to $t$, we obtain
\begin{equation}\label{letus}
\int_{\Gam} \dot{f} ( X {\,\cdot\,} \nu )\, d\hn
= \int_{\Gam} ( 2 \nabla_{\Gamma} u^- {\,\cdot\,} \nabla_{\Gamma} \dot{u}^-
- 2 \nabla_{\Gamma} u^+ {\,\cdot\,} \nabla_{\Gamma} \dot{u}^+ 
+\dot{H} ) ( X {\,\cdot\,} \nu ) \, d\hn.
\end{equation}
Integrating by parts, according to \eqref{fparts}, and using
\eqref{upto-solves} (see also Remark~\ref{rm:upto}), we deduce
\begin{eqnarray}
2\int_{\Gam} (\nabla_{\Gamma} u^\pm {\,\cdot\,} \nabla_{\Gamma} \dot{u}^\pm)
( X {\,\cdot\,} \nu ) \,d\hn 
& = &
- 2 \int_{\Gam}  \dot{u}^\pm 
\div_{\Gamma} ( ( X {\,\cdot\,} \nu ) \nabla_{\Gamma} u^\pm )\,
d\hn
\nonumber
\\ 
& = & - 2 \int_{\Gam} \dot{u}^\pm
\partial_{\nu} \dot{u}^\pm\,  d\hn. \label{letus2}
\end{eqnarray}
Since $\partial_{\nu}\dot\nu{\,\cdot\,}\nu=
-\dot\nu{\,\cdot\,}\partial_{\nu}\nu=0$ by \eqref{NU}, we have 
$\div\,\dot\nu=\div_{\Gamma}\dot\nu$ and in turn, by
\eqref{defH}, $\dot H=\div_{\Gamma}\dot\nu$. 
Hence, integrating by parts and using (f) of Lemma~\ref{lem:identita},
we deduce
\begin{eqnarray}
\int_{\Gam} \dot{H} ( X {\,\cdot\,} \nu ) \, d\hn
& = & \int_{\Gam} \div_{\Gamma}\dot\nu\, ( X {\,\cdot\,} \nu ) \, d\hn
= - \int_{\Gam} \dot{\nu} {\,\cdot\,} \nabla_{\Gamma} 
(X {\,\cdot\,} \nu) 
\, d\hn 
\nonumber\\
& = &
\int_{\Gam} ( ( D_{\Gamma} X )^{T} [ \nu ]
+ D_\Gamma \nu [ X ] ) {\,\cdot\,} \nabla_{\Gamma} ( X {\,\cdot\,} \nu ) 
\, d\hn \nonumber \\
& = &
\int_{\Gam} | \nabla_{\Gamma} ( X {\,\cdot\,} \nu ) |^2 
\, d\hn. \label{letus3}
\end{eqnarray} 
Combining \eqref{int1}--\eqref{letus3}, we 
obtain \eqref{fvar2} and we  
conclude the proof of the theorem.
\end{proof}

\begin{remark} \label{remt}
Let us fix $s\in (-1,1)$. We observe that the family of diffeomorphisms
$$
\tilde{\Phi}_{h} := \Phi_{s+h} \circ \Phi^{-1}_s
$$
is an admissible flow for $\Gamma_{\Phi_s}=\Phi_s(\Gam)$ in $U$
(one can always reparameterize  the ``time'' variable $h$ 
away from $0$ so that $\tilde{\Phi}_{h}$ is defined for all $h\in (-1,1)$)
and that $(\dot{\tilde{\Phi}}_h)|_{h=0} = X_{\Phi_s} $
and $(\ddot{\tilde{\Phi}}_h)|_{h=0} = Z_{\Phi_s}$. 
Applying Theorem~\ref{th:var2}, we deduce that 
\begin{eqnarray*}
\lefteqn{\frac{d^2}{dt^2}F((u_{\Phi_t}, K_{\Phi_t});U)|_{t=s}
=\frac{d^2}{dh^2}F((u_{\Phi_{s+h}}, \tilde\Phi_h(K_{\Phi_s}));U)|_{h=0}}
\\
& = & 2 \int_{\Gamma_{\Phi_s}} ( \dot{u}^+_{\Phi_s}
\partial_{\nu_{\Phi_s}} \dot{u}^+_{\Phi_s} 
- \dot{u}^-_{\Phi_s} \partial_{\nu_{\Phi_s}} \dot{u}^-_{\Phi_s} ) \, d\hn
+ \int_{\G_{\Phi_s}} |\nabla_{\G_{\Phi_s}} 
( X_{\Phi_s}{\,\cdot\,} \nu_{\Phi_s} ) |^2\, d\hn
\\
& & 
\hskip-2mm
{}+\int_{\Gamma_{\Phi_s}} ( X_{\Phi_s} {\,\cdot\,} \nu_{\Phi_s})^2 
( 2 \B_{\Phi_s}[\nabla_{\Gamma_{\Phi_s}} u^+_{\Phi_s}, 
\nabla_{\Gamma_{\Phi_s}} u^+_{\Phi_s}] 
- 2 \B_{\Phi_s}[\nabla_{\Gamma_{\Phi_s}} u^-_{\Phi_s}, 
\nabla_{\Gamma_{\Phi_s}} u^-_{\Phi_s}]
- |\B_{\Phi_s}|^2 )\,  d\hn
\\
& & 
\hskip-2mm
{}+\int_{\Gamma_{\Phi_s}}  f_s
 ( Z_{\Phi_s}{\,\cdot\,} \nu_{\Phi_s} 
-2 X^{\parallel}_{\Phi_s}{\,\cdot\,} 
\nabla_{\Gamma_{\Phi_s}} ( X_{\Phi_s}{\,\cdot\,} \nu_{\Phi_s} )
+ \B_{\Phi_s}[X^{\parallel}_{\Phi_s}, X^{\parallel}_{\Phi_s}]
+H_{\Phi_s}( X_{\Phi_s}{\,\cdot\,} \nu_{\Phi_s})^2)\, d\hn,
\end{eqnarray*}
where $f_s:=|\nabla u^-_{\Phi_s}|^2-|\nabla u^+_{\Phi_s}|^2+
H_{\Phi_s}$.
Moreover, $\dot u_{\Phi_s}$ belongs to 
$L^{1,2}_0(U\setmeno K_{\Phi_s};\partial U)$ and satisfies
\begin{multline}\nonumber
\int_U \nabla \dot{u}_{\Phi_s}{\,\cdot\,}\nabla z\, dx
\\
{}+\int_{\Gamma_{\Phi_s}}
(\div_{\Gamma_{\Phi_s}} 
( (X_{\Phi_s} {\,\cdot\,} \nu_{\Phi_s}) 
\nabla_{\Gamma_{\Phi_s}} u^+_{\Phi_s})z^+
-\div_{\Gamma_{\Phi_s}} 
( (X_{\Phi_s} {\,\cdot\,} \nu_{\Phi_s}) \nabla_{\Gamma_{\Phi_s}} u^-_{\Phi_s}  
)z^-)\, d\hn=0
\end{multline}
for every $z\in L^{1,2}_0(U\setmeno K_{\Phi_s}; \partial U)$.
\end{remark}

As already explained in the introduction, 
in the context of this paper critical points are partially regular admissible pairs which satisfy an additional transmission condition along the discontinuity set. 

\begin{definition}\label{critical}
Let $\Om$, $(u,K)$, $U$, and $\Gamma$ be as in Definition~\ref{flux}. We say that $(u,K)$ is  a
{\em critical point in $U$ with respect to $\Gamma$ } if 
\begin{equation}\label{ecritical}
H=|\nabla_{\Gamma}u^+|^2-|\nabla_{\Gamma}u^-|^2\qquad\text{on $\Gamma\cap U$.}
\end{equation}
\end{definition}
 
If $(u,K)$ is a critical point, then the expression of the second 
variation of $F$ at $(u,K)$ simplifies, as the function $f$ vanishes. 
We have therefore the following corollary.

\begin{corollary}\label{cor:crit}
In addition to the hypotheses of Theorem~\ref{th:var2} assume that $(u,K)$
is a critical point in $U$ with respect to $\Gamma$. Then 
\begin{multline}\nonumber
\frac{d^2}{dt^2}F((u_{\Phi_t}, K_{\Phi_t});U)|_{t=0}
= 2 \int_{\Gam} ( \dot{u}^+
\partial_{\nu} \dot{u}^+ 
- \dot{u}^- \partial_{\nu} \dot{u}^- ) \, d\hn 
+ \int_{\Gam} | \nabla_{\G} 
( X{\,\cdot\,} \nu ) |^2 d\hn
\\
{}+\int_{\Gam} ( X {\,\cdot\,} \nu)^2 
(  2 \B[\nabla_{\Gamma} u^+, \nabla_{\Gamma} u^+] 
- 2 \B[\nabla_{\Gamma} u^-, \nabla_{\Gamma} u^-]
- | \B |^2 ) \, d\hn. 
\end{multline}
\end{corollary}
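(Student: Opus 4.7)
The proof plan is essentially immediate: Corollary~\ref{cor:crit} is a direct specialization of Theorem~\ref{th:var2} under the additional critical point hypothesis \eqref{ecritical}. I would simply apply Theorem~\ref{th:var2} to the pair $(u,K)$ along the admissible flow $(\Phi_t)$ and then observe that the function
$$
f = |\nabla_\Gamma u^-|^2 - |\nabla_\Gamma u^+|^2 + H
$$
appearing in the last integrand of \eqref{fvar2} vanishes identically on $\Gamma \cap U$ by Definition~\ref{critical}. Indeed, \eqref{ecritical} reads $H = |\nabla_\Gamma u^+|^2 - |\nabla_\Gamma u^-|^2$ on $\Gamma\cap U$, which is exactly the statement $f=0$ there.

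Therefore the whole last line of \eqref{fvar2}, namely
$$
\int_{\Gam} f \bigl( Z\cdot\nu - 2 X^{\parallel}\cdot\nabla_\Gamma(X\cdot\nu) + \B[X^{\parallel},X^{\parallel}] + H(X\cdot\nu)^2 \bigr)\, d\hn,
$$
drops out, and the three surviving terms coincide verbatim with the expression stated in the corollary. No further computation is required, and there is no genuine obstacle: the only mild subtlety is to note that the cancellation of $f$ on $\Gamma\cap U$ is enough, since the integral in question is taken over $\Gamma\cap U$ and the factor multiplying $f$ (in particular $Z\cdot\nu$) is a bounded measurable function on $\Gamma\cap U$, so pointwise vanishing of $f$ there already kills the integrand.

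Thus the proof reduces to a single line: apply Theorem~\ref{th:var2} and invoke \eqref{ecritical} to set $f\equiv 0$ on $\Gamma\cap U$.
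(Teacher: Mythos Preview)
Your proposal is correct and matches the paper's own reasoning exactly: the corollary is stated as an immediate consequence of Theorem~\ref{th:var2}, the paper simply noting that under the critical point condition \eqref{ecritical} the function $f$ vanishes, so the last line of \eqref{fvar2} disappears.
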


\begin{remark}\label{rm:crit}
We note that, if $(u,K)$ is a critical point in $U$ with respect to $\Gamma$, 
then the second variation of $(u, K)$ in $U$ with respect to the flow 
$(\Phi_t)$ depends only on the normal component of the variation
$X{\,\cdot\,}\nu$.  Moreover,
as $\dot u$ depends linearly on $X{\,\cdot\,\nu}$, the second variation
becomes in this case a quadratic form in the variable $X{\,\cdot\,}\nu$.
\end{remark}

The previous corollary suggests the following definition. 
Given $\Om$, $(u,K)$, $U$, and $\Gamma$ as in 
Definition~\ref{flux}, we can consider the function 
$\delta^2\!F ((u,\Gamma);U):H^1_0(\Gamma\cap U)\to\R$ given by
\begin{multline}\label{qvar2}
\delta^2\!F ((u,\Gamma);U)[\varphi]
:=  2\int_{\Gam} (\vf^+\partial_\nu\vf^+ - \vf^-\partial_\nu\vf^- )\, d\hn
+\int_{\Gam} |\nablat \varphi |^2\, d\hn 
\\
{}+\int_{\Gam} (2\B[\nablat u^+,\nablat u^+]
-2\B[\nablat u^-,\nablat u^-] -|\B|^2) \varphi^2\, d\hn, 
\end{multline}
where $\vf\in L^{1,2}_0(U\setmeno K; \partial U)$ is the solution of 
\begin{equation} \label{vphi}
\int_U \nabla \vf{\,\cdot\,} \nabla z\, dx+\int_{\Gam} (\div_{\Gamma} 
( \varphi \nabla_{\Gamma} u^+  )z^+-\div_{\Gamma} 
( \varphi\nabla_{\Gamma} u^-  )z^- )\, d\hn=0
\end{equation}
for all $z\in L^{1,2}_0(U\setmeno K; \partial U)$.
As $\vf$ depends linearly on $\varphi$, the function 
$\delta^2\!F ((u,\Gamma);U)$ defines a quadratic form on $H^1_0(\Gamma\cap U)$.
Arguing as in Remark~\ref{rm:upto}, it is easy to see that
\begin{equation}\label{mj1}
\partial_\nu \vf^\pm = \div_{\Gamma} 
(\varphi \nabla_{\Gamma} u^\pm) \qquad \text{on } \Gam
\end{equation}
and 
\begin{equation}\label{mj2}
\int_{\Gam} (\vf^+\partial_\nu \vf^+
-\vf^-\partial_\nu \vf^-)\, d\hn = - \int_U
|\nabla \vf|^2\, dx.
\end{equation}

We conclude this section by proving a second order necessary condition
for minimality, expressed in terms of the quadratic form defined in \eqref{qvar2}.
Minimality is intended in the sense of the following definition.

\begin{definition}\label{def:wmin}
Let $\Om$, $(u,K)$, $U$, and $\Gamma$ be as in 
Definition~\ref{flux} and let $k\in \N\cup\{\infty\}$.
We say that $(u,K)$ is a {\em $C^k$-local minimizer in $U$ with respect to $\Gamma$} if 
there exists $\delta>0$ such that 
\begin{equation}\label{locmin}
\int_U |\nabla u|^2\, dx +\hn(K\cap U) \leq
\int_U |\nabla v|^2\, dx +\hn(\Phi(K\cap U))
\end{equation}
for every $C^k$-diffeomorphism $\Phi$ on $\overline U$ with
$\Phi=I$ on $(K\cap U)\setmeno \Gamma$ and  $\|\Phi - I\|_{C^k}\le\delta$, and 
every $v\in L^{1,2}(U\setmeno \Phi(K\cap U))$ with $v=u$ $\hn$-a.e.\ on $\partial U$.
We say that $(u,K)$ is an {\em isolated $C^k$-local minimizer in $U$ with respect to $\Gamma$} if 
\eqref{locmin} holds with the strict inequality for
every $\Phi$ as before, with $\Gamma_\Phi\neq \Gam$.
\end{definition}

Every $C^{\infty}$-local minimizer has nonnegative second variation, as made
precise by the following proposition.

\begin{theorem}\label{th:necessary}
Assume that  $(u,K)$ is a $C^{\infty}$-local minimizer in $U$ with respect
to $\Gamma$. Then the quadratic form \eqref{qvar2} is positive semidefinite; 
i.e., 
\begin{equation}\label{name}
\delta^2\!F((u,\Gamma);U)[\varphi]\geq0 \qquad\text{for every }
\varphi\in H^1_0(\Gamma\cap U).
\end{equation}
\end{theorem}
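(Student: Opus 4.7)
The plan is, for each smooth test function $\varphi\in C^\infty_c(\Gam)$, to realize $\varphi$ as the normal velocity $X\cdot\nu$ of an admissible flow, extract from the vanishing of the first variation the Euler--Lagrange condition \eqref{ecritical} (so that Corollary~\ref{cor:crit} applies), identify the resulting second derivative with $\delta^2\!F((u,\Gamma);U)[\varphi]$ via uniqueness, and finally extend the inequality from $C^\infty_c(\Gam)$ to $H^1_0(\Gam)$ by density.

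For the first part, given $\varphi\in C^\infty_c(\Gam)$ I would fix a tubular neighbourhood $\mathcal V$ of $\overline{\supp\varphi}$ with $\overline{\mathcal V}\subset U\setmeno(K\setmeno\Gamma)$, which exists because $\Gamma$ has positive distance from $\Gamma_s\cup\partial U$ by Definition~\ref{flux}. Extending $\varphi$ to $\tilde\varphi\in C^\infty_c(\mathcal V)$ and setting $V:=\tilde\varphi\,\nu$, with $\nu$ the normal extension on $\mathcal V$, the maps $\Phi_t:=I+tV$ form an admissible flow for $\Gamma$ in $U$ for $|t|$ small, with $X=V$ and $X\cdot\nu=\varphi$ on $\Gam$. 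Since $\Phi_t\to I$ in $C^k$ for every $k$ as $t\to 0$ and $\Phi_t=I$ on $(K\cap U)\setmeno\Gamma$, the $C^\infty$-local minimality forces $t\mapsto F((u_{\Phi_t},K_{\Phi_t});U)$ to attain a local minimum at $t=0$. Imposing that its first derivative vanish and using \eqref{1var} yields $\int_{\Gam}f\varphi\,d\hn=0$ for every $\varphi\in C^\infty_c(\Gam)$, whence $f\equiv 0$ on $\Gam$ and $(u,K)$ is critical in $U$ with respect to $\Gamma$ in the sense of Definition~\ref{critical}.

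Once criticality is established, Corollary~\ref{cor:crit} applies. The nonnegativity of the second derivative at $t=0$, combined with uniqueness in $L^{1,2}_0(U\setmeno K;\partial U)$ for the problems \eqref{upto-solves} and \eqref{vphi} (which coincide under $\varphi=X\cdot\nu$, forcing $\dot u=\vf$), gives $\delta^2\!F((u,\Gamma);U)[\varphi]\geq 0$ for every $\varphi\in C^\infty_c(\Gam)$. Via \eqref{mj2} the quadratic form can be rewritten as
\begin{equation*}
\delta^2\!F((u,\Gamma);U)[\varphi]=-2\int_U|\nabla \vf|^2\,dx+\int_{\Gam}|\nablat\varphi|^2\,d\hn+\int_{\Gam} a\,\varphi^2\,d\hn,
\end{equation*}
where $a:=2\B[\nablat u^+,\nablat u^+]-2\B[\nablat u^-,\nablat u^-]-|\B|^2$ lies in $L^\infty(\Gam)$ thanks to the smoothness of $u$ and $\B$ on $\overline\Gam$. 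Continuity of $\varphi\mapsto\vf$ from $H^1_0(\Gam)$ to $L^{1,2}_0(U\setmeno K;\partial U)$, which follows from the variational formulation of \eqref{vphi}, makes each of the three terms continuous on $H^1_0(\Gam)$, and \eqref{name} extends to the whole space by density. The main technical delicacy is precisely this continuity: the data $\div_\Gamma(\varphi\nablat u^\pm)$ appearing in \eqref{vphi} only belong to $H^{-1}(\Gam)$ for $\varphi\in H^1_0(\Gam)$, so one must interpret \eqref{vphi} by shifting the tangential derivative off $\varphi$ through integration by parts on $\Gam$, and then invoke the corresponding elliptic estimate. Everything else is a routine application of the first and second variation formulas derived earlier in this section.
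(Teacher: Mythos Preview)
Your proposal is correct and follows essentially the same route as the paper: build an admissible flow with normal velocity $\varphi$, apply Corollary~\ref{cor:crit}, and extend by density to $H^1_0(\Gam)$. The only differences are that you explicitly derive the Euler--Lagrange condition \eqref{ecritical} from the vanishing first variation (the paper invokes Corollary~\ref{cor:crit} directly, leaving this verification implicit) and that you spell out the continuity of $\varphi\mapsto v_\varphi$ needed for the density step.
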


\begin{proof}
Let us fix $\varphi\in C^{\infty}_c(\Gam)$ and
consider an admissible flow $(\Phi_t)$ for $\Gamma$ in $U$ 
such that for $t$ small enough $\Phi_t=(I+t\varphi\nu)\circ\Pi_{\Gamma}$ 
in a neighbourhood of $\Gam$, where $\Pi_{\Gamma}$ denotes the orthogonal 
projection on $\Gamma$.
It turns out that the vector field $X$, introduced in \eqref{noT}, 
coincides with $\varphi\nu$ on $\Gam$. Using Corollary~\ref{cor:crit} and 
the minimality of $(u,K)$ we  then deduce
$$
\delta^2\!F((u,\Gamma);U)[\varphi]= \frac{d^2}{dt^2}F((u_{\Phi_t}, K_{\Phi_t});U)|_{t=0}\geq 0.
$$
The thesis follows by approximating any $\varphi\in H^1_0(\Gam)$ with
functions in $C^{\infty}_c(\Gam)$.
\end{proof}

\end{section}

\begin{section}{Equivalent formulations of the second order condition}\label{sec:eq}

Throughout the whole section $(u,K)$ will be a pair in $\Ar(\Om)$ and $U\subset\Om$ an admissible
subdomain for $(u,K)$ in the sense of Definition~\ref{def:partreg}, while  
$\Gamma$ will denote a relatively open set compactly contained in $\Gamma_r$.

The purpose of this section is to perform a more detailed study of the second 
variation. In particular we shall derive some necessary and sufficient
conditions for the second variation to be  {\em positive definite} in admissible  subdomains $U$ of $\Om$; i.e.,
\begin{equation}\label{var2+}
\delta^2\!F((u,\Gamma); U)[\varphi]>0\quad\text{for every $\varphi\in H^1_0(\Gam)\setmeno\{0\}$.}
\end{equation}

In the first subsection we show that \eqref{var2+} is equivalent to a condition on the first eigenvalue of a suitable compact
operator $\tu$ on $H^1_0(\Gam)$.
In the second subsection we formulate  \eqref{var2+}
in terms of a dual minimum problem. 

\subsection{An equivalent eigenvalue problem}
We introduce a bilinear form on $H^1_0(\Gam)$ 
defined by
\begin{equation}\label{bilf}
(\varphi,\psi)_\sim:=\int_{\Gam} a\, \varphi\psi\, d\hn
+\int_{\Gam} \nablat\varphi {\,\cdot\,} \nablat\psi\, d\hn
\end{equation}
for every $\varphi,\psi\in H^1_0(\Gam)$, where
$$
a(x):=2 \B(x)[\nablat u^+(x),\nablat u^+(x)]
-2 \B(x)[\nablat u^-(x),\nablat u^-(x)] -|\B(x)|^2
$$
for every $x\in\Gam$. 

\begin{remark}\label{agen}
All the results contained in this section do not depend on the special form 
of $a$ and continue to hold whenever $a$ is replaced by any smooth bounded function on~$\Gamma$.
\end{remark}

We start by showing that the bilinear form \eqref{bilf}, when it is a scalar product,
is indeed equivalent to the standard scalar product of $H^1_0(\Gam)$.

\begin{proposition}\label{prop:ps}
Assume that
\begin{equation}\label{hyp1}
(\varphi,\varphi)_\sim> 0\qquad\text{for every $\varphi\in H^1_0(\Gam)\setmeno\{0\}$.}
\end{equation}
 Then the bilinear form \eqref{bilf}
defines an equivalent scalar product on $H^1_0(\Gam)$.
\end{proposition}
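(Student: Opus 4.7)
The plan is to show that the bilinear form $(\cdot,\cdot)_\sim$ satisfies the two-sided estimate
\[
c\,\|\varphi\|_{H^1(\Gamma\cap U)}^2 \le (\varphi,\varphi)_\sim \le C\,\|\varphi\|_{H^1(\Gamma\cap U)}^2
\]
for some constants $0<c\le C$. The upper bound is immediate: the function $a$ is smooth and $\overline{\Gamma\cap U}$ is compact, so $a\in L^\infty(\Gamma\cap U)$, and therefore
\[
(\varphi,\varphi)_\sim \le \|a\|_\infty\,\|\varphi\|_{L^2}^2 + \|\nabla_\Gamma\varphi\|_{L^2}^2 \le \max\{\|a\|_\infty,1\}\,\|\varphi\|_{H^1}^2.
\]
Symmetry and bilinearity are obvious, and positive definiteness is exactly the assumption \eqref{hyp1}, so once the lower bound is established we may conclude that $(\cdot,\cdot)_\sim$ is an equivalent scalar product.

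The real content is the lower bound, which I would prove by a standard contradiction-and-compactness argument. Suppose it fails; then there exists a sequence $\varphi_n\in H^1_0(\Gamma\cap U)$ with $\|\varphi_n\|_{H^1}=1$ and $(\varphi_n,\varphi_n)_\sim\to 0$. By weak compactness we may extract a subsequence (not relabeled) with $\varphi_n\wto\varphi$ in $H^1_0(\Gamma\cap U)$; since $\Gamma\cap U$ is a bounded open subset of the smooth manifold $\Gamma$ (recall $\Gamma\subset\subset\Gamma_r$), the Rellich--Kondrachov embedding $H^1_0(\Gamma\cap U)\hookrightarrow L^2(\Gamma\cap U)$ is compact, so $\varphi_n\to \varphi$ strongly in $L^2(\Gamma\cap U)$. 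Since $a$ is bounded, it follows that $\int_{\Gamma\cap U}a\,\varphi_n^2\,d\hn \to \int_{\Gamma\cap U}a\,\varphi^2\,d\hn$, and therefore
\[
\|\nabla_\Gamma\varphi_n\|_{L^2}^2 = (\varphi_n,\varphi_n)_\sim - \int_{\Gamma\cap U} a\,\varphi_n^2\,d\hn \,\longrightarrow\, -\int_{\Gamma\cap U} a\,\varphi^2\,d\hn.
\]

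By weak lower semicontinuity of the $L^2$-norm of the tangential gradient under weak $H^1$-convergence, we obtain $\|\nabla_\Gamma\varphi\|_{L^2}^2 \le -\int_{\Gamma\cap U} a\,\varphi^2\,d\hn$, i.e.\ $(\varphi,\varphi)_\sim \le 0$. Hypothesis \eqref{hyp1} then forces $\varphi=0$. But this yields $\|\nabla_\Gamma\varphi_n\|_{L^2}^2\to 0$ and $\|\varphi_n\|_{L^2}^2\to 0$ simultaneously, contradicting $\|\varphi_n\|_{H^1}=1$. Thus the desired coercivity constant $c>0$ exists and the proof is complete.

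The only step requiring any delicacy is the compact embedding, which relies on the fact that $\Gamma\cap U$ is a relatively nice open subset of the smooth compact piece $\overline{\Gamma}\subset\Gamma_r$; since we work with $H^1_0$ (closure of $C^\infty_c(\Gamma\cap U)$), functions may be extended by zero to the ambient compact manifold and Rellich applies without boundary regularity concerns. Every other step is routine.
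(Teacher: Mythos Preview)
Your proof is correct and follows essentially the same contradiction-and-compactness argument as the paper: bound one direction by $a\in L^\infty$, then for the other direction take a normalized sequence with $(\varphi_n,\varphi_n)_\sim\to 0$, pass to a weak $H^1$-limit, use Rellich to get strong $L^2$-convergence, and conclude via lower semicontinuity that the limit must vanish, yielding the contradiction. Your additional remark justifying the compact embedding by zero-extension to the ambient compact manifold is a welcome clarification not spelled out in the paper.
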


\begin{proof}
Assumption \eqref{hyp1} immediately implies that the bilinear form \eqref{bilf} is a scalar product.
In particular, 
\begin{equation}\label{tildenorm}
\|\varphi\|_\sim:=(\varphi,\varphi)_\sim^{1/2}
\end{equation}
defines a norm on $H^1_0(\Gam)$.

To show the equivalence with the scalar product of $H^1_0(\Gam)$,
we first observe that, as $a$ is bounded, we have
$\|\varphi\|_\sim\leq C\|\varphi\|_{H^1}$
for every $\varphi\in H^1_0(\Gam)$.
For the opposite inequality we argue by contradiction assuming that there exists a sequence
$(\varphi_n)$ such that $\|\varphi_n\|_{H^1}=1$ and
\begin{equation}\label{unitH1}
\|\varphi_n\|_\sim\leq \tfrac1n.
\end{equation} 
Then, up to subsequences, $\varphi_n\wto \varphi$ weakly in $H^1_0(\Gam)$. 
In particular, $\varphi_n\to \varphi$ in $L^2(\Gam)$, hence
\begin{equation}\label{conv}
\begin{array}{c}
\displaystyle
\int_\Gam a\varphi^2\,d\hn=\lim_n\int_\Gam a\varphi_n^2\,d\hn, 
\smallskip\\
\displaystyle
\int_\Gam |\nablat\varphi|^2\,d\hn\leq\liminf_n\int_\Gam |\nablat \varphi_n|^2\,d\hn. 
\end{array}
\end{equation} 
Recalling \eqref{unitH1} it follows that $\|\varphi\|_\sim=0$, that is, $\varphi=0$.
Using again \eqref{unitH1} and \eqref{conv}, we deduce that
$\int_\Gam |\nablat \varphi_n|^2\,d\hn\to 0$, which contradicts
the fact that $\|\varphi_n\|_{H^1}=1$.
\end{proof}

Given $\varphi\in H^1_0(\Gam)$ let $\vf$ be the function defined in \eqref{vphi}. The linear map 
$$
\psi\in H^1_0(\Gam)\mapsto -2 \int_\Gam (\vf^+\divt (\psi\nablat u^+)
-\vf^-\divt (\psi\nablat u^-))\,d\hn,
$$
is continuous on $H^1_0(\Gam)$. If condition \eqref{hyp1} is satisfied,
then by Proposition~\ref{prop:ps} and by the Riesz Theorem there exists a unique element
$\tu\varphi\in H^1_0(\Gam)$
such that
\begin{equation}\label{tildeT}
(\tu\varphi,\psi)_\sim = -2 \int_\Gam (\vf^+\divt (\psi\nablat u^+)
-\vf^-\divt (\psi\nablat u^-))\,d\hn
\end{equation}
for every $\psi\in H^1_0(\Gam)$. By this definition and \eqref{mj1}
it turns out that
\begin{equation}\label{mj3}
\delta^2\!F((u,\Gamma); U)[\varphi]
= \|\varphi\|^2_\sim -(\tu\varphi,\varphi)_\sim
\end{equation}
for every $\varphi\in H^1_0(\Gam)$, provided \eqref{hyp1} is satisfied.

We now study some properties of the operator $T$.

\begin{proposition}\label{Tcpt}
Assume condition \eqref{hyp1}.
Then the linear operator $\tu: (H^1_0(\Gam),\sim)\to (H^1_0(\Gam),\sim)$,
defined by \eqref{tildeT}, is monotone, compact, and self-adjoint.
\end{proposition}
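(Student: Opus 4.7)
The plan is to derive the symmetric representation
$$
(T\varphi,\psi)_\sim = 2\int_U \nabla \vf \cdot \nabla v_\psi\, dx,
$$
from which monotonicity and self-adjointness will follow by inspection, and then to establish compactness by reducing it to the $L^2(\Gam)$-convergence provided by the Rellich embedding on $\Gam$.

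First I would substitute $z=\vf\in L^{1,2}_0(U\setmeno K;\partial U)$ as test function in equation \eqref{vphi} with $\psi$ in place of $\varphi$; the surface term produced on $\Gam$ coincides (up to sign) with the right-hand side of \eqref{tildeT}, yielding the desired representation. Self-adjointness is then immediate from the symmetry in $\varphi$ and $\psi$; monotonicity follows by specializing to $\psi=\varphi$, which gives $(T\varphi,\varphi)_\sim = 2\|\nabla \vf\|^2_{L^2(U)}\ge 0$.

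For compactness, I would fix a sequence $\varphi_n\wto\varphi$ in $H^1_0(\Gam)$ and, by linearity of the map $\varphi\mapsto \vf$, reduce to showing $\|T\varphi_n\|_\sim\to 0$ whenever $\varphi_n\wto 0$. Setting $w_n:=v_{\varphi_n}$, I would take $z=w_n$ as test function in its defining equation and integrate by parts on $\Gam$ via \eqref{fparts} (legitimate since $\varphi_n\in H^1_0(\Gam)$), obtaining
$$
\|\nabla w_n\|^2_{L^2(U)} = \int_{\Gam} \varphi_n \bigl(\nablat u^+\cdot\nablat w_n^+ - \nablat u^-\cdot\nablat w_n^-\bigr)\, d\hn.
$$
By the elliptic regularity results of Section~\ref{appendix}, the traces satisfy $\|w_n^\pm\|_{H^1(\Gam)}\le C\|\varphi_n\|_{H^1(\Gam)}$, hence remain uniformly bounded; combined with the smoothness of $u$ on $\overline\Gamma$, the Cauchy-Schwarz inequality, and the Rellich compact embedding $H^1_0(\Gam)\hookrightarrow L^2(\Gam)$, this gives $\|\nabla w_n\|_{L^2(U)}\to 0$. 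The same integration-by-parts argument applied to an arbitrary $\psi\in H^1_0(\Gam)$ produces the uniform bound $\|\nabla v_\psi\|_{L^2(U)}\le C\|\psi\|_\sim$ (using Proposition~\ref{prop:ps}), so the symmetric representation yields
$$
|(T\varphi_n,\psi)_\sim| \le 2\|\nabla w_n\|_{L^2(U)}\|\nabla v_\psi\|_{L^2(U)} \le C\|\nabla w_n\|_{L^2(U)}\|\psi\|_\sim,
$$
and taking the supremum over $\|\psi\|_\sim\le 1$ gives $\|T\varphi_n\|_\sim\to 0$.

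The main technical hurdle is the $H^1(\Gam)$ bound on the traces $v_\varphi^\pm$: the generic trace theorem for $L^{1,2}(U\setmeno K)$ only delivers $H^{1/2}(\Gam)$, which is insufficient even to make sense of the tangential gradients $\nablat w_n^\pm$ appearing in the key identity for $\|\nabla w_n\|^2$. This sharper trace regularity is exactly the content of the elliptic theory deferred to Section~\ref{appendix}; without it the compactness argument above would not close.
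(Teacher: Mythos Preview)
Your symmetric representation $(T\varphi,\psi)_\sim=2\int_U\nabla\vf{\,\cdot\,}\nabla v_\psi\,dx$ is correct and is essentially the same device the paper uses: monotonicity is obtained there exactly as you do, and self-adjointness via the Green identity in the paper is equivalent to your symmetry observation.

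The compactness argument, however, differs substantially from the paper's and leans on a regularity input that is not actually supplied in Section~\ref{appendix}. The appendix treats the smooth dependence of $u_{\Phi_t}$ on $t$; it does not prove that for a generic $\varphi\in H^1_0(\Gam)$ the traces $\vf^\pm$ lie in $H^1(\Gam)$ with the quantitative bound $\|\vf^\pm\|_{H^1(\Gam)}\le C\|\varphi\|_{H^1(\Gam)}$. That bound is obtainable from standard elliptic regularity for the Neumann problem (the boundary datum $\divt(\varphi\nablat u^\pm)$ lies in $L^2(\Gam)$, yielding $\vf\in H^{3/2}$ near $\Gamma$ and hence $H^1$ traces), but it is an external ingredient you would need to state and justify separately, not something deferred to the paper's appendix.

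The paper's route avoids this entirely and is worth knowing because it is lighter. It never touches $\nablat\vf^\pm$. Instead it argues: if $\varphi_n\wto\varphi$ weakly in $H^1_0(\Gam)$, then $\divt(\varphi_n\nablat u^\pm)\wto\divt(\varphi\nablat u^\pm)$ weakly in $L^2(\Gam)$, so $v_{\varphi_n}\wto\vf$ weakly in $L^{1,2}_0(U\setmeno K;\partial U)$, and the ordinary compact trace embedding gives $v_{\varphi_n}^\pm\to\vf^\pm$ strongly in $L^2(\Gam)$. Plugging this into \eqref{tildeT} shows $T\varphi_n\wto T\varphi$ weakly; taking $\psi=T\varphi_n$ in \eqref{tildeT} and pairing the strong $L^2$ convergence of $v_{\varphi_n}^\pm$ with the weak $L^2$ convergence of $\divt((T\varphi_n)\nablat u^\pm)$ gives $\|T\varphi_n\|_\sim\to\|T\varphi\|_\sim$, hence strong convergence. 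The only compactness used is the standard trace $L^{1,2}_0\to L^2(\Gam)$, and the argument stays at the level of the weak formulation throughout.
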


\begin{proof}
By \eqref{mj1} and \eqref{mj2} we obtain
$$
(T\varphi,\varphi)_\sim=-2 \int_\Gam (\vf^+\divt (\varphi\nablat u^+)
-\vf^-\divt (\varphi\nablat u^-))\,d\hn=2\int_U|\nabla \vf |^2\, dx \geq0,
$$
that is, $T$ is monotone.

Let $\varphi_n\wto \varphi$ weakly in $(H^1_0(\Gam),\sim)$.
Then Proposition~\ref{prop:ps} implies  that
$\divt (\varphi_n\nablat u^\pm)$ converges
to $\divt (\varphi\nablat u^\pm)$ weakly in $L^2(\Gam)$.
From \eqref{vphi} it follows that $v_{\varphi_n}\wto\vf$
weakly in $L^{1,2}_0(U\setmeno K;\partial U)$. By the compactness of the trace operator
we have that $v_{\varphi_n}^\pm$
(up to additive constants on the connected components of $U\setmeno K$
whose boundary does not meet $\partial U$)
converges to $\vf^\pm$ strongly in $L^2(\Gam)$. This is enough to
deduce from \eqref{tildeT} that $\tu$ is weakly continuous, hence continuous.

Taking $\varphi=\varphi_n$ and $\psi=\tu\varphi_n$ in \eqref{tildeT}, we obtain
that $\|\tu\varphi_n\|_\sim\to \|\tu\varphi\|_\sim$, which concludes the proof
of the compactness of $\tu$.

Using the Green identity
\begin{eqnarray*}
\lefteqn{\int_\Gam (\vf^+ \divt(\psi \nablat u^+)
-\vf^- \divt (\psi \nablat u^-))\,d\hn}
\\
& = &
\int_\Gam (v_\psi^+ \divt (\varphi \nablat u^+)
- v_\psi^- \divt (\varphi \nablat u^-))\,d\hn,
\end{eqnarray*}
it is easy to check that $\tu$ is self-adjoint.
\end{proof}

Under the assumptions of Proposition~\ref{Tcpt} we can define
\begin{equation}\label{max}
\lambda_1:=\max_{\|\varphi\|_\sim=1} (\tu\varphi,\varphi)_\sim=\|T\|_\sim.
\end{equation}
It is well known that $\lambda_1$ coincides with the first eigenvalue of $\tu$.
The following proposition gives an equivalent characterization of $\lambda_1$.

\begin{proposition}\label{conjpt}
Assume condition \eqref{hyp1} and consider the 
following auxiliary system in the unknown 
$(v,\varphi)\in L^{1,2}_0(U\setmeno K;\partial U){\times}
H^1_0(\Gam)$:
\begin{equation}\label{auxil0}
\begin{array}{l}
\displaystyle
\lambda\int_U \nabla v{\,\cdot\,} \nabla z\, dx
+\int_{\Gam} [\div_{\Gamma} 
( \varphi \nabla_{\Gamma} u^+  )z^+-\div_{\Gamma} 
( \varphi \nabla_{\Gamma} u^-  )z^-]\, d\hn=0,
\medskip
\\
\displaystyle
\int_{\Gam} \nablat\varphi {\,\cdot\,} \nablat\psi\, d\hn
+\int_{\Gam} a\, \varphi\psi\, d\hn
\\
\displaystyle
\hphantom{\int_{\Gam} a\, \varphi\psi\, d\hn}
{}+2\int_{\Gam}(\divt(\psi\nablat u^+) v^+-\divt(\psi\nablat u^-) 
v^-)\, d\hn=0
\end{array}
\end{equation}
for all $z\in L^{1,2}_0(U\setmeno K;\partial U)$ and for all $\psi\in H^1_0(\Gam)$.
Then $\lambda_1$ coincides with the greatest $\lambda$ such that \eqref{auxil0} 
admits a nontrivial solution $(v,\varphi)\neq(0,0)$.
\end{proposition}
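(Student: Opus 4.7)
The plan is to put the nontrivial solutions of the coupled system \eqref{auxil0} in one-to-one correspondence with the nonzero eigenpairs of the operator $T$ from \eqref{tildeT}, and then to appeal to the spectral properties of $T$ established in Proposition~\ref{Tcpt}.

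First I would show that if $(v,\varphi)\neq(0,0)$ solves \eqref{auxil0} with some $\lambda\neq 0$, then $\lambda$ is an eigenvalue of $T$ with eigenvector $\varphi$. Rewriting the first equation in \eqref{auxil0} as
\[
\int_U \nabla(\lambda v){\,\cdot\,}\nabla z\,dx + \int_{\Gam}\bigl[\divt(\varphi\nablat u^+)\,z^+ - \divt(\varphi\nablat u^-)\,z^-\bigr]\,d\hn = 0
\]
for every $z\in L^{1,2}_0(U\setmeno K;\partial U)$, a comparison with the defining problem \eqref{vphi} together with the uniqueness of its solution forces $\lambda v = \vf$. Plugging $v=\vf/\lambda$ into the second equation of \eqref{auxil0} and recalling the definition \eqref{tildeT} of $T$, one rewrites it as
\[
(\varphi,\psi)_\sim - \tfrac{1}{\lambda}(T\varphi,\psi)_\sim = 0 \qquad\text{for every }\psi\in H^1_0(\Gam),
\]
which is precisely $T\varphi=\lambda\varphi$. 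Note also that $\varphi$ must be nonzero, since $\varphi=0$ and $\lambda\neq 0$ would force $v=0$ by the first equation.

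Conversely, if $T\varphi=\mu\varphi$ with $\varphi\neq 0$ and $\mu>0$, then reversing the computation above with $v:=\vf/\mu$ produces a nontrivial solution $(v,\varphi)$ of \eqref{auxil0} with $\lambda=\mu$.

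To finish, I would invoke Proposition~\ref{Tcpt}, according to which $T$ is monotone, compact, and self-adjoint. By the spectral theorem for compact self-adjoint operators, its eigenvalues are nonnegative and the largest one is exactly $\lambda_1=\|T\|_\sim$ defined in \eqref{max}, attained at some $\varphi_1\neq 0$. By the previous step this $\lambda_1$ corresponds to a nontrivial solution of \eqref{auxil0}, while the first step shows that every nontrivial solution with $\lambda\neq 0$ gives rise to an eigenvalue of $T$, hence $\lambda\leq\lambda_1$; the remaining case $\lambda\leq 0$ is trivially dominated by $\lambda_1\geq 0$. I do not anticipate any serious obstacle: the single nontrivial observation is the identity $\lambda v = \vf$ coming from the uniqueness of \eqref{vphi}, after which the rest is a direct translation between the two formulations via the bilinear form $(\cdot,\cdot)_\sim$.
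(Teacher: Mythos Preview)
Your argument is correct and follows exactly the approach of the paper, which records the proof in a single sentence: under \eqref{hyp1}, $\lambda$ is an eigenvalue of $T$ with eigenfunction $\varphi$ if and only if $(\vf/\lambda,\varphi)$ is a nontrivial solution of \eqref{auxil0}. You have simply spelled out both directions of this equivalence and added the bookkeeping for $\lambda\le 0$, which the paper leaves implicit.
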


\begin{proof}
It is enough to observe that under condition \eqref{hyp1}, $\lambda$ is 
an eigenvalue of $\tu$ with eigenfunction $\varphi$ if and only if the pair $(\vf/\lambda,\varphi)$
(see \eqref{vphi} for the definition of $\vf$) is a nontrivial solution of \eqref{auxil0}.
\end{proof}

\begin{remark}
We note that the strong formulation of \eqref{auxil0} corresponds to
\begin{equation}\label{auxil}
\begin{cases}
\Delta v =0 & \text{in } U\setmeno K, \\
v=0 & \text{on }\partial U, \\
\partial_\nu v^\pm=0 &\text{on } K\setmeno (\Gam),\\
\lambda\,\partial_\nu v^\pm=\divt (\varphi\nablat u^\pm)
& \text{on }\Gamma\cap U, \\
-\Deltat \varphi+a\varphi=2\nablat u^+{\,\cdot\,}\nablat v^+
-2\nablat u^-{\,\cdot\,}\nablat v^- 
& \text{on }\Gamma\cap U.
\end{cases}
\end{equation}
\end{remark}

Condition \eqref{var2+} can be characterized in terms of $\lambda_1$, as explained in the following theorem.

\begin{theorem}\label{thm:eq1}
Condition \eqref{var2+} is satisfied if and only if
the following two properties hold:
\begin{itemize}
	\item[(i)] $(\varphi,\varphi)_\sim> 0$ for every $\varphi\in H^1_0(\Gam)\setmeno\{0\}$;
	\smallskip
	
	\item[(ii)] $\lambda_1<1$.
\end{itemize}
\end{theorem}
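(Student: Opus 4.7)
The plan is to exploit the identity
$$
\delta^2\!F((u,\Gamma);U)[\varphi]=(\varphi,\varphi)_\sim - 2\int_U|\nabla \vf|^2\,dx,
$$
which follows by combining the definition \eqref{qvar2} with the Green identity \eqref{mj2}. This neatly splits the second variation into a ``local'' bilinear part $(\varphi,\varphi)_\sim$ and a manifestly nonpositive ``nonlocal'' contribution. The key advantage is that, as soon as $(\cdot,\cdot)_\sim$ is a scalar product (condition (i)), Proposition~\ref{prop:ps} makes it equivalent to the $H^1_0$-scalar product, and then the alternative representation \eqref{mj3}, namely $\delta^2\!F[\varphi]=\|\varphi\|_\sim^2-(T\varphi,\varphi)_\sim$, reduces the problem to a spectral comparison.

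For the necessity part, assume \eqref{var2+}. I would first prove (i) by contradiction: if some $\varphi_0\neq 0$ satisfied $(\varphi_0,\varphi_0)_\sim\leq 0$, then by the displayed identity $\delta^2\!F[\varphi_0]\leq -2\int_U|\nabla v_{\varphi_0}|^2\,dx\leq 0$, contradicting \eqref{var2+}. Once (i) is established, the operator $T$ and its first eigenvalue $\lambda_1$ in \eqref{max} are well-defined, and by Proposition~\ref{Tcpt} $T$ is compact and self-adjoint on the Hilbert space $(H^1_0(\Gam),(\cdot,\cdot)_\sim)$, so the supremum in \eqref{max} is attained at some $\varphi_\ast$ with $\|\varphi_\ast\|_\sim=1$. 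Substituting in \eqref{mj3} gives $\delta^2\!F[\varphi_\ast]=1-\lambda_1$, and since $\varphi_\ast\neq 0$, hypothesis \eqref{var2+} forces (ii).

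For the sufficiency part, assume (i) and (ii). Using Proposition~\ref{prop:ps}, formula \eqref{mj3} is valid, and for every $\varphi\in H^1_0(\Gam)\setminus\{0\}$ the variational definition of $\lambda_1$ yields
$$
\delta^2\!F[\varphi]=\|\varphi\|_\sim^2-(T\varphi,\varphi)_\sim\geq (1-\lambda_1)\|\varphi\|_\sim^2>0,
$$
where the strict inequality uses $\lambda_1<1$ together with the equivalence of $\|\cdot\|_\sim$ to the $H^1$-norm (so $\|\varphi\|_\sim>0$).

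The only subtle point I anticipate is the attainment of $\lambda_1$ at a nonzero function, which is what lets us upgrade ``$\delta^2\!F\geq 0$ for all $\varphi$'' to the strict bound $\lambda_1<1$ in the necessity direction; this relies on the compactness and self-adjointness of $T$ guaranteed by Proposition~\ref{Tcpt}, and on the equivalence of norms from Proposition~\ref{prop:ps}. Beyond this, the argument is a straightforward bookkeeping of the two identities \eqref{mj2} and \eqref{mj3}.
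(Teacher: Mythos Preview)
Your proposal is correct and follows essentially the same route as the paper: both arguments hinge on the identity $\delta^2\!F[\varphi]=(\varphi,\varphi)_\sim-2\int_U|\nabla v_\varphi|^2\,dx$ (equivalently \eqref{mj3}) to get (i), and then, once (i) holds, use \eqref{mj3} together with the variational characterization \eqref{max} of $\lambda_1$ to see that \eqref{var2+} is equivalent to $\lambda_1<1$. Your explicit appeal to the attainment of the maximum via the compactness of $T$ (Proposition~\ref{Tcpt}) makes transparent a point the paper leaves implicit in writing ``$\max$'' rather than ``$\sup$'' in \eqref{max}.
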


\begin{proof}
Assume that condition \eqref{var2+} is satisfied. Then by \eqref{mj2} we have
$$
(\varphi,\varphi)_\sim > -2
\int_\Gam (\vf^+ \partial_\nu \vf^+ 
-\vf^- \partial_\nu \vf^-)\,d\hn
=2\int_U |\nabla \vf|^2\, dx\geq0
$$
for every $\varphi\in H^1_0(\Gam)\setmeno\{0\}$, which implies condition (i).
Once (i) is satisfied, condition (ii) is equivalent to \eqref{var2+} by
\eqref{mj3} and \eqref{max}.
\end{proof}

Upon assuming \eqref{hyp1}, we can also characterize the positive
semidefiniteness of the second variation $\delta^2\!F((u,\Gamma);U)$
in terms of $\lambda_1$. More precisely,
we have the following.

\begin{theorem}\label{newthm}
Assume \eqref{hyp1}.
Then condition \eqref{name} holds if and only if $\lambda_1\leq 1$.
\end{theorem}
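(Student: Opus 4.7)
The plan is to leverage the identity \eqref{mj3}, which under hypothesis \eqref{hyp1} rewrites the second variation as
\[
\delta^2\!F((u,\Gamma);U)[\varphi]=\|\varphi\|_\sim^2-(T\varphi,\varphi)_\sim
\]
for every $\varphi\in H^1_0(\Gam)$. Condition \eqref{name} is then equivalent to the inequality $(T\varphi,\varphi)_\sim\leq \|\varphi\|_\sim^2$ for every such $\varphi$.

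To finish, I would homogenize: the inequality above is trivial when $\varphi=0$, and for $\varphi\neq 0$ (which under \eqref{hyp1} satisfies $\|\varphi\|_\sim>0$ by Proposition~\ref{prop:ps}) it is equivalent, by scaling $\varphi/\|\varphi\|_\sim$, to
\[
(T\varphi,\varphi)_\sim\leq 1\qquad\text{for every }\varphi\in H^1_0(\Gam)\text{ with }\|\varphi\|_\sim=1.
\]
By the definition \eqref{max} of $\lambda_1$ as the maximum (which is attained because $T$ is compact and self-adjoint by Proposition~\ref{Tcpt}), this is in turn equivalent to $\lambda_1\leq 1$.

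There is no real obstacle here: the identification \eqref{mj3} between $\delta^2\!F$ and the quadratic form $\|\varphi\|_\sim^2-(T\varphi,\varphi)_\sim$ has already been established in the derivation of $T$, and the variational characterization of $\lambda_1$ is exactly \eqref{max}. The only small care required is to use Proposition~\ref{prop:ps} to ensure that $\|\cdot\|_\sim$ is a genuine norm, so that the rescaling to $\|\varphi\|_\sim=1$ is legitimate and exhausts all nonzero test functions.
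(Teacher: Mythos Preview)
Your argument is correct and essentially identical to the paper's: both directions rest on \eqref{mj3} and the variational characterization \eqref{max}. The only cosmetic difference is that for the implication $\lambda_1>1\Rightarrow\neg\eqref{name}$ the paper evaluates $\delta^2\!F$ at an eigenfunction $\varphi_1$ of $T$ to get $(1-\lambda_1)\|\varphi_1\|_\sim^2<0$, whereas you use directly that the supremum in \eqref{max} is attained; these are the same observation.
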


\begin{proof}
The fact that $\lambda_1\leq 1$ implies \eqref{name} follows from \eqref{mj3} and \eqref{max},
as in Theorem~\ref{thm:eq1}. Conversely, assume $\lambda_1>1$ and let $\varphi_1$ be an eigenfunction of $T$ associated with $\lambda_1$. Then by \eqref{mj3} we have
$\delta^2\!F((u,\Gamma);U)[\varphi_1]=(1-\lambda_1)\|\varphi_1\|_\sim^2<0$.
\end{proof}

We conclude this subsection with a corollary, where we show that 
pointwise coercivity of the second variation $\delta^2\!F((u,\Gamma);U)$
implies uniform coercivity.

\begin{corollary}\label{unif}
Assume \eqref{var2+}. Then there exists a constant $C>0$ such that
$$
\delta^2\!F((u,\Gamma); U)[\varphi]\geq C\|\varphi\|_{H^1}^2
$$
for every $\varphi\in H^1_0(\Gam)$.
\end{corollary}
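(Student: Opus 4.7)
The plan is to combine the equivalence of norms from Proposition~\ref{prop:ps} with the eigenvalue characterization from Theorem~\ref{thm:eq1}, using the identity \eqref{mj3} as the bridge. This is essentially a one-shot argument: pointwise positivity plus compactness of $T$ forces a quantitative gap.

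First, I would invoke Theorem~\ref{thm:eq1} to translate the hypothesis \eqref{var2+} into the two conditions (i) $(\varphi,\varphi)_\sim > 0$ for every $\varphi \in H^1_0(\Gam)\setmeno\{0\}$ and (ii) $\lambda_1 < 1$. From (i) and Proposition~\ref{prop:ps}, the norm $\|\cdot\|_\sim$ defined in \eqref{tildenorm} is equivalent to the standard norm on $H^1_0(\Gam)$, so there exists a constant $c > 0$ such that $\|\varphi\|_\sim^2 \geq c\, \|\varphi\|_{H^1}^2$ for every $\varphi \in H^1_0(\Gam)$.

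Next, I would use the identity \eqref{mj3}, which gives
\[
\delta^2\!F((u,\Gamma); U)[\varphi] = \|\varphi\|_\sim^2 - (T\varphi, \varphi)_\sim
\]
for every $\varphi \in H^1_0(\Gam)$. By the variational characterization \eqref{max} of the first eigenvalue, $(T\varphi,\varphi)_\sim \leq \lambda_1 \|\varphi\|_\sim^2$, hence
\[
\delta^2\!F((u,\Gamma); U)[\varphi] \geq (1-\lambda_1)\|\varphi\|_\sim^2.
\]
Combining this with the norm equivalence and using (ii), we obtain
\[
\delta^2\!F((u,\Gamma); U)[\varphi] \geq c(1-\lambda_1)\,\|\varphi\|_{H^1}^2,
\]
so we can take $C := c(1-\lambda_1) > 0$.

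There is no serious obstacle here: the main content has already been stored in Theorem~\ref{thm:eq1}, Proposition~\ref{prop:ps}, and the fact (Proposition~\ref{Tcpt}) that $T$ is compact and self-adjoint so that the supremum in \eqref{max} is attained and equals the operator norm. The only thing to be careful about is that both estimates—the norm equivalence and the spectral gap $1 - \lambda_1$—yield strictly positive constants, which is guaranteed by (i) and (ii) respectively.
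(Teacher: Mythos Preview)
Your proof is correct and follows essentially the same route as the paper: use \eqref{mj3} to write $\delta^2\!F((u,\Gamma);U)[\varphi]=\|\varphi\|_\sim^2-(T\varphi,\varphi)_\sim\geq(1-\lambda_1)\|\varphi\|_\sim^2$ via \eqref{max}, then invoke Theorem~\ref{thm:eq1} for $\lambda_1<1$ and Proposition~\ref{prop:ps} for the norm equivalence. The paper's argument is identical, just slightly more compressed.
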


\begin{proof}
We first note that by \eqref{mj3}
$$
\delta^2\!F((u,\Gamma); U)[\varphi]= \|\varphi\|^2_\sim -(\tu\varphi,\varphi)_\sim
\geq \|\varphi\|^2_\sim -\|\tu\|_\sim\|\varphi\|^2_\sim
=(1-\lambda_1)\|\varphi\|^2_\sim.
$$
The conclusion follows from Theorem~\ref{thm:eq1} and Proposition~\ref{prop:ps}.
\end{proof}

\begin{remark}\label{rmk:2d}
If $N=2$ condition \eqref{hyp1} is always true. Indeed, by \eqref{ecritical} 
the expression of $a(x)$ reduces to
$H^2(x)$. Therefore, by Theorem~\ref{thm:eq1} condition \eqref{var2+} is satisfied 
in this case if and only if $\lambda_1<1$.
In higher dimensions the situation is different. A counterexample can be constructed by 
considering as $\Gamma_r$ an unstable minimal hypersurface (i.e., a critical point of the 
area functional with nonpositive second variation) and then by choosing any function $u$ 
defined in a tubular neighbourhood of $\Gamma_r$, satisfying the first order conditions 
\eqref{uequiv} and \eqref{ecritical}, and 
$\nabla u^-=\nabla u^+$ on $\Gamma_r$.
This can be easily done using Cauchy-Kowalevskaya theorem. The conclusion follows by observing
that in this situation the bilinear form \eqref{bilf} reduces to the second variation of the area functional
at $\Gamma_r$.
\end{remark}

\subsection{A dual minimum problem}\label{subs:eig}
We introduce the linear operators
$$
A_\pm: H^1_0(\Gam)\to L^2(\Gam),
\qquad A_\pm\varphi:=-2\divt (\varphi\nablat u^\pm)
$$
and we denote by $A_\pm^*:L^2(\Gam)\to H^{-1}(\Gam)$ the adjoint
operators of $A_\pm$ with respect to the scalar product of $L^2(\Gam)$; i.e.,
for every $\psi\in L^2(\Gam)$ and every $\varphi\in H^1_0(\Gam)$
$$
\langle A_\pm^*\psi,\varphi\rangle=
\int_\Gam A_\pm\varphi\,\psi\, d\hn=
-2\int_\Gam \divt (\varphi\nablat u^\pm)\psi\, d\hn,
$$
where $\langle\cdot,\cdot\rangle$ denotes the duality product in 
$H^{-1}(\Gam){\times}H^1_0(\Gam)$.
We consider also the resolvent operator $R: H^{-1}(\Gam)\to H^1_0(\Gam)$,
which maps any $f\in H^{-1}(\Gam)$ into the solution $\varphi\in H^1_0(\Gam)$
of the problem
$$
\begin{cases}
-\Deltat \varphi+ a\varphi=f & \text{in }\Gam,
\\
\varphi\in H^1_0(\Gam).
\end{cases}
$$
The operator $R$ is well defined under the assumptions of Proposition~\ref{prop:ps}.
We note also that the operator $\tu$, introduced in \eqref{tildeT}, can be written as
\begin{equation}\label{tildeT2}
\tu\varphi= R(A_+^* \vf^+ - A_-^* \vf^-)
\end{equation}
for every $\varphi\in H^1_0(\Gam)$, where $\vf$ is defined in \eqref{vphi}.

We introduce now the following dual minimum problem:
\begin{equation}\label{dual}
\min\Big\{2 \int_U |\nabla v|^2\, dx : \ v\in L^{1,2}_0(U\setmeno K;\partial U),  
\ \|R(A_+^* v^+ - A_-^* v^-)\|_\sim =1\Big\}.
\end{equation}
An argument similar to the one used in the proof of Proposition~\ref{Tcpt} shows that 
\begin{equation}\label{opera}
v\mapsto R(A_+^* v^+ - A_-^* v^-)\quad\text{is compact from $L^{1,2}_0(U\setmeno K;\partial U)$ to $H^1_0(\Gam)$.}
\end{equation}
Exploiting this remark, it is not difficult to prove 
that the problem \eqref{dual} admits a solution by the direct method of the Calculus of Variations. 

The following theorem, which is the main result of this subsection, provides
a characterization of condition \eqref{var2+} in terms of the dual problem \eqref{dual}.

\begin{theorem}\label{thm:eq2}
Assume condition \eqref{hyp1}. Then $\lambda_1=1/\mu$,  where $\mu$ is the value of \eqref{dual}.
Moreover, condition \eqref{var2+} is satisfied if and only if \eqref{hyp1}
holds and $\mu>1$.
\end{theorem}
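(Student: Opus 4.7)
The plan is to exploit a duality identity between the eigenvalue characterization of $\lambda_1$ and the constrained minimization \eqref{dual}, and then to deduce the equivalence with \eqref{var2+} directly from Theorem~\ref{thm:eq1}.

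The key ingredient is the identity
\[
\bigl(R(A_+^* v^+-A_-^* v^-),\varphi\bigr)_\sim = 2\int_U \nabla \vf{\,\cdot\,}\nabla v\,dx,
\]
valid for every $\varphi\in H^1_0(\Gam)$ and every $v\in L^{1,2}_0(U\setmeno K;\partial U)$. I would establish it by first using the defining property of the resolvent $R$ to recognize the left-hand side as the duality pairing $\langle A_+^* v^+-A_-^* v^-,\varphi\rangle$, then expanding via the definitions of $A_\pm^*$ to rewrite it as the boundary integral appearing in \eqref{vphi}, and finally testing \eqref{vphi} with $z=v$.

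Using this identity, the inequality $1/\mu\le\lambda_1$ follows from a Cauchy--Schwarz argument. By homogeneity, $1/\mu = \sup_{v\neq 0} \|R(A_+^* v^+-A_-^* v^-)\|_\sim^2 /(2\int_U|\nabla v|^2\,dx)$. For any admissible $v$, setting $\varphi := R(A_+^* v^+-A_-^* v^-)$ yields $\|\varphi\|_\sim^2 = 2\int_U \nabla \vf{\,\cdot\,}\nabla v\,dx$; Cauchy--Schwarz combined with the relation $2\int_U|\nabla \vf|^2\,dx=(\tu\varphi,\varphi)_\sim$ (a consequence of \eqref{mj2} and \eqref{tildeT}) gives
\[
\|\varphi\|_\sim^2 \;\le\; \bigl((\tu\varphi,\varphi)_\sim\bigr)^{1/2}\bigl(2\!\int_U|\nabla v|^2\,dx\bigr)^{1/2}\;\le\; \lambda_1^{1/2}\|\varphi\|_\sim\bigl(2\!\int_U|\nabla v|^2\,dx\bigr)^{1/2},
\]
whence $1/\mu\le\lambda_1$ after squaring and rearranging. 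For the reverse inequality $\mu\le 1/\lambda_1$, I would pick a normalized eigenfunction $\varphi_1$ with $\tu\varphi_1=\lambda_1\varphi_1$ and $\|\varphi_1\|_\sim=1$ (the degenerate case $\lambda_1=0$ makes the constraint in \eqref{dual} vacuous, and by convention $\mu=+\infty$) and take $v_1 := v_{\varphi_1}/\lambda_1$ as competitor. Formula \eqref{tildeT2} yields $R(A_+^*v_1^+-A_-^*v_1^-)=\tu\varphi_1/\lambda_1=\varphi_1$, so $v_1$ is admissible for \eqref{dual}, and the identity $(\tu\varphi_1,\varphi_1)_\sim=2\int_U|\nabla v_{\varphi_1}|^2\,dx=\lambda_1$ (obtained by testing \eqref{vphi} with $z=v_{\varphi_1}$) forces $2\int_U|\nabla v_1|^2\,dx=1/\lambda_1$, proving $\mu\le 1/\lambda_1$.

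Combining the two inequalities yields $\lambda_1 = 1/\mu$. The equivalence with \eqref{var2+} then follows at once from Theorem~\ref{thm:eq1}: condition \eqref{var2+} is equivalent to \eqref{hyp1} together with $\lambda_1<1$, and the latter, given the first part of the statement, is equivalent to $\mu>1$. The main obstacle lies in the duality identity displayed above, which is the hinge connecting the primal spectral problem with the dual capacity-like minimization; it requires carefully unwinding the adjoints $A_\pm^*$ and the defining property of $R$ to see that the Dirichlet energy of $\vf$ is exactly the quantity paired with $\varphi$ by the scalar product $(\cdot,\cdot)_\sim$. Once this is in place, the rest is a standard Rayleigh--Ritz argument.
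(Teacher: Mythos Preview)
Your proposal is correct. The reverse inequality $\mu\le 1/\lambda_1$ is handled exactly as in the paper, by plugging the rescaled eigenfunction $v_{\varphi_1}/\lambda_1$ into the dual problem. The difference lies in the proof of $1/\mu\le\lambda_1$: the paper argues via the Euler--Lagrange equation of \eqref{dual}, showing that a minimizer $v$ (whose existence is established beforehand) has a Lagrange multiplier equal to $\mu/2$ and that $\varphi:=R(A_+^*v^+-A_-^*v^-)$ is then an eigenfunction of $\tu$ with eigenvalue $1/\mu$, whence $1/\mu\le\lambda_1$. Your route instead extracts the same inequality directly from the duality identity $(R(A_+^*v^+-A_-^*v^-),\varphi)_\sim=2\int_U\nabla\vf{\,\cdot\,}\nabla v\,dx$ and Cauchy--Schwarz, bypassing both the existence of a minimizer and the Lagrange multiplier computation. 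This is a cleaner and more elementary argument; on the other hand, the paper's approach yields the extra information that every minimizer of the dual problem is (up to rescaling) of the form $v_{\varphi_1}$ for some first eigenfunction $\varphi_1$, which your Cauchy--Schwarz bound does not immediately give.
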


\begin{proof}
It is enough to prove that under \eqref{hyp1} we have $\mu=1/\lambda_1$, as 
the second part of the statement will then follow by Theorem~\ref{thm:eq1}.

Let $\varphi\in H^1_0(\Gam)$ be such that $\|\varphi\|_\sim=1$ and
$\tu\varphi=\lambda_1\varphi$. Then by \eqref{tildeT2} we have
\begin{equation}\label{11}
R(A_+^* \vf^+ - A_-^* \vf^-)=\lambda_1\varphi,
\end{equation}
that is
$$
-\lambda_1\Deltat\varphi+\lambda_1 a\varphi=A_+^* \vf^+ - A_-^* \vf^-.
$$
Multiplying both sides by $\varphi$ and integrating by parts, we obtain
$$
\lambda_1\int_\Gam a\varphi^2\, d\hn 
+\lambda_1\int_\Gam |\nablat\varphi|^2\, d\hn
= \int_\Gam(\vf^+A_+\varphi - \vf^-A_-\varphi)\,d\hn.
$$
Using the fact that $\|\varphi\|_\sim=1$ and $2\partial_\nu \vf^\pm=-A_\pm\varphi$,
we deduce that
\begin{equation}\label{13}
\lambda_1=-2\int_\Gam (\vf^+\partial_\nu \vf^+ - \vf^-\partial_\nu \vf^-)\, d\hn
=2\int_U |\nabla \vf|^2\, dx,
\end{equation}
where the last equality follows from \eqref{mj2}.
By \eqref{11} the function $\vf/\lambda_1$ is admissible for problem \eqref{dual}. Therefore,
from \eqref{13} we infer that $\mu\leq 1/\lambda_1$.

To show the converse inequality, let $v$ be a solution of \eqref{dual}. Then it
is easy to see that there exists a Lagrange multiplier $\mu_0$ such that
\begin{equation}\label{lag}
\int_U \nabla v{\,\cdot\,} \nabla z= \mu_0\, (R(A_+^*v^+-A_-^*v^-), R(A_+^*z^+-A_-^*z^-))_\sim
\end{equation}
for every $z\in L^{1,2}_0(U\setmeno K;\partial U)$.
Choosing $v$ as test function in \eqref{lag}, we deduce that $2\mu_0=\mu$.

We set $\varphi:=R(A_+^*v^+-A_-^*v^-)$ and $\psi:=R(A_+^*z^+-A_-^*z^-)$.
Then using the definition of $\psi$ and integrating by parts it turns out that
$$
\int_\Gam a\,\varphi\psi\,d\hn +\int_\Gam\nablat\varphi
{\,\cdot\,}\nablat\psi\,d\hn =\int_\Gam (z^+A_+\varphi -z^-A_-\varphi)\,d\hn,
$$
in other words
\begin{equation}\label{14}
(R(A_+^*v^+-A_-^*v^-), R(A_+^*z^+-A_-^*z^-))_\sim=
(\varphi,\psi)_\sim=
\int_\Gam (z^+A_+\varphi -z^-A_-\varphi)\,d\hn.
\end{equation}
From \eqref{lag} and \eqref{14} it follows that 
$\frac{1}{\mu}v$ satisfies \eqref{vphi},
which implies that $v=\mu\vf$.
Therefore, by \eqref{tildeT2} we have that 
$$
\tu\varphi=\frac{1}{\mu}R(A_+^*v^+-A_-^*v^-)=
\frac{1}{\mu}\varphi;
$$
i.e., $1/\mu$ is an eigenvalue of $\tu$. This implies that $1/\mu\leq\lambda_1$
and concludes the proof of the theorem.
\end{proof}

In the next corollary the dependence of $\lambda_1$ and $\mu$ on the domain will be made explicit.
In particular we will show that they depend monotonically on $U$.

\begin{corollary}\label{c:mono}
Assume that condition \eqref{hyp1} is satisfied. Let $U_1,U_2\subset\Omega$ be admissible subdomains
for $(u,K)$ such that $U_1\subset U_2$ and $\Gamma\cap U_1=\Gamma\cap U_2$.
Then $\lambda_1(U_1)\leq\lambda_1(U_2)$.
In particular, if condition \eqref{var2+} is satisfied in $U_2$, then it also holds in $U_1$.
\end{corollary}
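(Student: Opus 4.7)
The plan is to exploit the dual characterization of $\lambda_1$ furnished by Theorem~\ref{thm:eq2}: since $\lambda_1(U)=1/\mu(U)$, where $\mu(U)$ is the value of the minimum problem \eqref{dual} on $U$, proving the monotonicity $\lambda_1(U_1)\leq \lambda_1(U_2)$ is equivalent to showing the reverse monotonicity $\mu(U_1)\geq \mu(U_2)$. The latter is natural, because enlarging the domain from $U_1$ to $U_2$ enlarges the class of admissible $v$ in \eqref{dual}, so the infimum can only decrease, provided the constraint is unchanged.

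The key technical step is the zero-extension of competitors. Given any admissible $v\in L^{1,2}_0(U_1\setmeno K;\partial U_1)$ for \eqref{dual} on $U_1$, define $\tilde v$ on $U_2$ by setting $\tilde v=v$ on $U_1$ and $\tilde v=0$ on $U_2\setmeno U_1$. Since $v$ has zero trace on $\partial U_1$, and $U_1\subset U_2$, a routine verification shows that $\tilde v\in L^{1,2}_0(U_2\setmeno K;\partial U_2)$ and
$$
\int_{U_2}|\nabla \tilde v|^2\,dx=\int_{U_1}|\nabla v|^2\,dx.
$$
Moreover, because $\Gamma\cap U_1=\Gamma\cap U_2$ and $\tilde v=v$ on a neighbourhood of this set, the traces $\tilde v^\pm$ on $\Gamma\cap U_2$ coincide with $v^\pm$ on $\Gamma\cap U_1$. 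Under condition \eqref{hyp1}, the operators $A_\pm^*$ and $R$ are the same in both configurations (they depend only on $\Gamma\cap U_j$ and on $u$), and therefore $R(A_+^*\tilde v^+-A_-^*\tilde v^-)=R(A_+^* v^+-A_-^* v^-)$, with the same $\|\cdot\|_\sim$-norm. Hence $\tilde v$ is admissible for \eqref{dual} on $U_2$ and attains the same energy. This yields $\mu(U_2)\leq \mu(U_1)$, and Theorem~\ref{thm:eq2} gives $\lambda_1(U_1)\leq \lambda_1(U_2)$.

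For the last assertion, if \eqref{var2+} holds on $U_2$, then by Theorem~\ref{thm:eq1} we have $\lambda_1(U_2)<1$. Combined with the monotonicity just proved, this gives $\lambda_1(U_1)\leq\lambda_1(U_2)<1$, and \eqref{hyp1} continues to hold on $U_1$ since it depends only on $\Gamma\cap U_1=\Gamma\cap U_2$. Invoking Theorem~\ref{thm:eq1} in the reverse direction, condition \eqref{var2+} holds on $U_1$.

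The only point requiring care is the verification that the zero-extension truly belongs to the Deny--Lions space with vanishing trace on $\partial U_2$, which is a standard consequence of the definition \eqref{L120} once one observes that $\partial U_1\cap U_2$ lies inside $U_2$ with $\tilde v=0$ there and that $\partial U_2\setmeno\overline{U_1}$ also carries the zero trace by construction. No delicate estimate is needed beyond this bookkeeping.
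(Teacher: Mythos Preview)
Your proposal is correct and follows essentially the same approach as the paper: you use the dual characterization $\lambda_1(U)=1/\mu(U)$ from Theorem~\ref{thm:eq2}, extend competitors by zero from $U_1$ to $U_2$ to obtain $\mu(U_1)\geq\mu(U_2)$, and conclude via Theorem~\ref{thm:eq1}. Your write-up is in fact more explicit than the paper's in spelling out why the constraint in \eqref{dual} is preserved (the traces on $\Gamma\cap U_1=\Gamma\cap U_2$ coincide and the operators $R$, $A_\pm^*$ depend only on $\Gamma$ and $u$), which is a welcome clarification.
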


\begin{proof}
As $\partial U_1\capset \partial_L(U_1\setmeno K)$,
 we have that if $v\in L^{1,2}_0(U_1\setmeno K;\partial U_1)$, then the function
$\tilde v$ given by $\tilde v=v$ on $U_1\setmeno K$ and $\tilde v=0$ on $U_2\setmeno U_1$
belongs to $L^{1,2}_0(U_2\setmeno K;\partial U_2)$. Therefore, 
if $v$ is an admissible function for the problem \eqref{dual} in $U_1$,
then $\tilde v$ is an admissible function for the problem \eqref{dual} in $U_2$.
Hence $\mu(U_1)\geq\mu(U_2)$.
The conclusions follows from Theorems~\ref{thm:eq2} and~\ref{thm:eq1}.
\end{proof}
The following corollary will be used in the next section (see Remark~\ref{r:mono2}). It shows that 
$\lambda_1$ and $\mu$ are continuous along decreasing sequences of open sets.
 
\begin{corollary}\label{c:mono2}
Assume that condition \eqref{hyp1} is satisfied. 
Let $U_n\subset\Om$ be a decreasing sequence of admissible subdomains for $(u,K)$.
Assume also that the open set $U$ defined as the interior part of $\cap_{n=1}^\infty U_n$
is an admissible subdomain for $(u, K)$ and that 
$\Gamma\cap U_n=\Gamma\cap U$ for every $n$. Then $\lambda_1(U_n)\to \lambda_1(U)$.
\end{corollary}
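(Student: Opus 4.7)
The idea is to pass to the dual variational characterization of Theorem~\ref{thm:eq2} and apply the direct method. Under hypothesis \eqref{hyp1}, one has $\lambda_1(V)=1/\mu(V)$ on every admissible subdomain $V$, and by Corollary~\ref{c:mono} the sequence $\mu(U_n)$ is monotone nondecreasing in $n$ with $\mu(U_n)\le\mu(U)$ for every $n$. Writing $m:=\lim_n \mu(U_n)$, it is enough to prove the nontrivial inequality $\mu(U)\le m$, which then yields $\lambda_1(U_n)\to\lambda_1(U)$ immediately.

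For each $n$, let $v_n\in L^{1,2}_0(U_n\setmeno K;\partial U_n)$ be a minimizer of the dual problem \eqref{dual} on $U_n$, so that $2\int_{U_n}|\nabla v_n|^2\,dx=\mu(U_n)\le\mu(U)$ and $\|R(A_+^*v_n^+-A_-^*v_n^-)\|_\sim=1$. I pick the representative of $v_n$ that vanishes on $\partial U_n\cap\partial_L(U_n\setmeno K)$ and extend it by zero to the whole of $U_1\setmeno K$; call the result $\tilde v_n$. Since the vanishing trace on $\partial U_n$ rules out any singular contribution to the gradient at the interface, $\tilde v_n\in L^{1,2}_0(U_1\setmeno K;\partial U_1)$ with $\|\nabla \tilde v_n\|_{L^2(U_1)}=\|\nabla v_n\|_{L^2(U_n)}$, and up to a subsequence $\tilde v_n\wto\tilde v$ weakly in $L^{1,2}_0(U_1\setmeno K;\partial U_1)$.

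Since $U$ is the interior of $\bigcap_n U_n$, every compact subset of $U_1\setmeno\overline{U}$ is contained in $U_1\setmeno U_n$ for all $n$ sufficiently large; hence $\tilde v_n$ vanishes identically on such a set for large $n$, and the weak limit satisfies $\tilde v=0$ a.e.\ on $U_1\setmeno U$. Therefore $\tilde v|_U\in L^{1,2}_0(U\setmeno K;\partial U)$. Moreover, since $\Gamma\subset\subset\Gamma_r$, by Proposition~\ref{pr2} the functions $\tilde v_n$ are uniformly bounded in $H^1$ on a tubular neighbourhood of $\Gam$; compactness of the trace operator then gives strong convergence $\tilde v_n^\pm\to\tilde v^\pm$ in $L^2(\Gam)$. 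The compactness property \eqref{opera} (applied on $U_1$) yields
$$
R(A_+^*\tilde v_n^+-A_-^*\tilde v_n^-)\to R(A_+^*\tilde v^+-A_-^*\tilde v^-)\qquad\text{strongly in }H^1_0(\Gam),
$$
so the normalization passes to the limit: $\|R(A_+^*\tilde v^+-A_-^*\tilde v^-)\|_\sim=1$. Hence $\tilde v|_U$ is admissible in \eqref{dual} on $U$, and by weak lower semicontinuity of the Dirichlet energy
$$
\mu(U)\le 2\int_U|\nabla \tilde v|^2\,dx\le 2\liminf_n\int_{U_n}|\nabla v_n|^2\,dx=m,
$$
as required.

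The main delicate point is showing that the nonlocal trace constraint is preserved in the limit: this would fail for a generic sequence of domains, but is rescued here by the hypothesis $\Gamma\cap U_n=\Gamma\cap U$, which keeps the auxiliary problem on $\Gamma$ intact, combined with the fact that $L^{1,2}$ functions are $H^1$ in a neighbourhood of $\Gamma$, ensuring compactness of the trace operator.
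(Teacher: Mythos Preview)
Your proof is correct and follows essentially the same approach as the paper: reduce to showing $\liminf_n\mu(U_n)\ge\mu(U)$ via Theorem~\ref{thm:eq2}, take minimizers $v_n$ of the dual problem on $U_n$, extend by zero to $U_1$, pass to a weak limit, use the compactness \eqref{opera} to carry the normalization constraint to the limit, and conclude by lower semicontinuity of the Dirichlet energy. Your write-up is in fact slightly more explicit than the paper's about why the trace constraint survives in the limit.
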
 

\begin{proof}
In view of Corollary \ref{c:mono} it is enough to show that $\lim_{n}\lambda_1(U_n)\leq \lambda_1(U)$.
By Theorem~\ref{thm:eq2} this is equivalent to prove that
\begin{equation}\label{byeq2}
\lim_{n}\mu(U_n)\geq\mu(U).
\end{equation}
Let $v_n$ be a solution of \eqref{dual} with $U$ replaced by $U_n$. Then the function $\tilde v_n$ 
given by $\tilde v_n=v_n$ on $U_n$ and $\tilde v_n=0$ on $U_1\setmeno U_n$ belongs
to $L^{1,2}_0(U_1\setmeno K;\partial U_1)$ and
$2\int_{U_1}|\nabla \tilde v_n|^2\, dx=\mu(U_n)\leq \mu(U)$. 
Hence there exists a subsequence (not relabelled) $\tilde v_n$ and a function 
$\tilde v\in L^{1,2}_0(U_1\setmeno K;\partial U_1)$ such that $\tilde v_n\wto\tilde v$ weakly in
$L^{1,2}_0(U_1\setmeno K;\partial U_1)$. Clearly $\tilde v =0$ a.e.\ in $U_1\setmeno U$, which in turn implies 
that the restriction $v$ of $\tilde v$ to the set $U$ belongs to $L^{1,2}_0(U\setmeno K;\partial U)$.
Recalling also \eqref{opera} we infer that $v$ is admissible for problem \eqref{dual} and thus
$$
\lim_{n}\mu(U_n)=\lim_n\, 2\int_{U_1}|\nabla \tilde v_n|^2\, dx\geq 2\int_{U_1}|\nabla \tilde v|^2\, dx
   =2\int_{U}|\nabla v|^2\, dx\geq \mu(U),
$$
which shows \eqref{byeq2} and concludes the proof.
\end{proof}
\end{section}

\begin{section}{A second order sufficient minimality condition}\label{sec:suf}

In this section we show that any critical point satisfying the second order
condition \eqref{var2+} is a local minimizer with respect to variations of class $C^2$
of the regular part $\Gamma_r$ of the discontinuity set.
Critical points which are $C^2$-local minimizers (in the sense
of Definition~\ref{def:wmin}) play in our context the same role of weak
minimizers in the classical Calculus of Variations, as made precise by the following theorem.

\begin{theorem}\label{thm:suff}
Let $N\leq3$ and let $\Om$, $(u,K)$, $U$, and $\Gamma$ be as in Definition~\ref{flux}.
Assume in addition that $(u,K)$ is a critical point in $U$ with respect to $\Gamma$ and that  \eqref{var2+}
is satisfied. Then $(u,K)$ is an isolated $C^2$-local minimizer in $U$ with respect to $\Gamma$.
\end{theorem}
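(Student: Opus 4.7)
I would argue by contradiction using a Taylor expansion along a path of admissible flows, and exploit the uniform coercivity of $\delta^2\!F$ given by Corollary~\ref{unif}. Suppose the conclusion fails: then for every $\delta>0$ there exist a $C^2$-diffeomorphism $\Phi$ on $\overline{U}$, equal to the identity on $(K\cap U)\setmeno\Gamma$ with $\|\Phi-I\|_{C^2}\leq\delta$ and $\Gamma_\Phi\neq\Gam$, and a competitor $v\in L^{1,2}(U\setmeno\Phi(K\cap U))$ with $v=u$ on $\partial U$, such that $F((v,\Phi(K\cap U));U)\leq F((u,K);U)$. Since $u_\Phi$ minimizes the Dirichlet integral at fixed discontinuity set with boundary datum $u$, we may replace $v$ by $u_\Phi$. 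Picking a sequence $\delta=\delta_n\to 0$, we obtain $\Phi_n$ and competitors $u_{\Phi_n}$ violating minimality. Reparametrizing $\Gamma_{\Phi_n}$ as a normal graph over $\Gam$, we may write $\Phi_n(x)=x+\varphi_n(x)\nu(x)$ on $\Gamma$ (up to a tangential reparametrization that does not change $\Gamma_{\Phi_n}$), with $\varphi_n\in C^2_c(\Gam)$, $\|\varphi_n\|_{C^2}\to 0$, and $\varphi_n\not\equiv 0$.

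Next I would interpolate by an admissible flow $(\Phi^n_t)_{t\in(-1,1)}$ with $\Phi^n_0=I$ and $\Phi^n_1=\Phi_n$, chosen so that on $\Gamma_{\Phi^n_t}$ the normal component of the velocity $X^n_t:=\dot\Phi^n_t\circ(\Phi^n_t)^{-1}$ corresponds, after pull-back to $\Gam$, to $\varphi_n$. Setting $F_n(t):=F((u_{\Phi^n_t},K_{\Phi^n_t});U)$, Taylor's formula with integral remainder yields
$$
F_n(1)-F_n(0)=F_n'(0)+\int_0^1(1-s)\,F_n''(s)\,ds.
$$
By \eqref{1var} and the criticality condition \eqref{ecritical}, $F_n'(0)=0$. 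The contradiction will therefore follow once I prove that
\begin{equation}\label{plan:goal}
F_n''(s)\geq c\,\|\varphi_n\|_{H^1(\Gam)}^2\qquad\text{uniformly in }s\in[0,1],\ n\gg 1,
\end{equation}
for a constant $c>0$ independent of $n$, since this would give $F_n(1)-F_n(0)>0$ for $n$ large.

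To prove \eqref{plan:goal} I would use Remark~\ref{remt} to decompose $F_n''(s)$ as the principal part $\delta^2\!F((u_{\Phi^n_s},\Gamma_{\Phi^n_s});U)[X^n_s\cdot\nu^n_s]$ defined exactly as in \eqref{qvar2} on the perturbed pair, plus an error containing the factor $f_s=|\nabla u^-_{\Phi^n_s}|^2-|\nabla u^+_{\Phi^n_s}|^2+H_{\Phi^n_s}$. By \eqref{ecritical} one has $f_0\equiv 0$ on $\Gam$, and the regularity results of Section~\ref{appendix} (in particular Proposition~\ref{prop:reg}) give $\|f_s\|_{L^\infty(\Gamma_{\Phi^n_s})}\to 0$ uniformly in $s$; pulling back to $\Gam$ and using elementary estimates on the geometric coefficients multiplying $f_s$, the error is bounded by $\omega_n\|\varphi_n\|_{H^1(\Gam)}^2$ with $\omega_n\to 0$. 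For the principal part I would pull back the PDE \eqref{vphi} posed on $U\setmeno K_{\Phi^n_s}$ to $U\setmeno K$: the resulting operator has coefficients depending continuously, in the $C^2$-topology of the flow, on $\Phi^n_s$, and the same is true of the boundary coefficient $a$ and of the trace coefficients. By continuous dependence of the solution in $L^{1,2}_0$ and of the quadratic form's zeroth-order coefficients, the principal part equals $(1+o(1))\,\delta^2\!F((u,\Gamma);U)[\varphi_n]$, which by Corollary~\ref{unif} is at least $(C_0/2)\|\varphi_n\|_{H^1(\Gam)}^2$ for $n$ large.

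The main obstacle is this continuous-dependence step, and more precisely the control of the nonlinear error terms produced by the pull-back (cross products of $\varphi_n$, $\nablat\varphi_n$, and geometric quantities of order $\|\varphi_n\|_{C^2}$) by $\|\varphi_n\|_{H^1(\Gam)}^2$ times a vanishing factor. This is exactly where the restriction $N\leq 3$ enters: on the $(N-1)$-dimensional manifold $\Gam$, the Sobolev embedding $H^1_0(\Gam)\hookrightarrow L^p(\Gam)$ for every $p<\infty$ is available only for $N-1\leq 2$, which lets one bound cubic expressions in $\varphi_n$ by a small multiple of $\|\varphi_n\|_{H^1(\Gam)}^2$ using the $C^2$-smallness of $\Phi_n$. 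For $N\geq 4$ only $L^{2(N-1)/(N-3)}(\Gam)$ is available and one cannot close the estimate at this level of regularity of the perturbation; in that case one must either strengthen the topology on the diffeomorphism or content oneself with the weaker minimality property mentioned in Remark~\ref{rmk:add}.
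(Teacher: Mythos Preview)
Your proposal is correct and follows essentially the same route as the paper: a Taylor expansion along an interpolating flow, vanishing of the first derivative by criticality, splitting of the second derivative into a principal quadratic part and an error multiplied by $f_s$, and a continuous-dependence argument (after pull-back to $U\setmeno K$) where the Sobolev embedding $H^1_0(\Gam)\hookrightarrow L^p(\Gam)$ for all $p<\infty$ on the $(N-1)$-dimensional manifold supplies the restriction $N\le 3$.

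The paper organizes the argument slightly differently: it works directly (not by contradiction) and encapsulates your ``$(1+o(1))\,\delta^2\!F((u,\Gamma);U)[\varphi_n]$'' step via the eigenvalue framework of Section~\ref{sec:eq}, proving two lemmas---one (Lemma~\ref{lm:disc}) giving a uniform equivalence $\|\cdot\|_{H^1(\Gamma_\Phi)}\le C_1\|\cdot\|_{\sim,\Phi}$, the other (Lemma~\ref{lm:Pnorm}) showing $\limsup_{\|\Phi-I\|_{C^2}\to 0}\lambda_{1,\Phi}\le\lambda_1<1$. The delicate point where $N\le 3$ enters is, precisely, an $H^{-1/2}(\Gamma)$ estimate for the difference of the pulled-back Neumann data, obtained via a Gagliardo $H^{1/2}$ bound on a product $\tilde\varphi_n\psi_n^\pm$ with $\tilde\varphi_n$ bounded in $H^1_0(\Gamma)$ and $\psi_n^\pm\to 0$ in $C^{0,\alpha}$; this matches your description of the obstacle, though it is a product estimate in a fractional norm rather than a literal ``cubic'' expression.
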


\begin{remark}\label{r:mono2}
We observe that in the statement of the theorem we can assume without loss of generality that 
$\Gamma\subset\subset U$. Indeed, if this is not the case, setting $\Gamma':=\Gam$, we can 
find an admissible subdomain $U'\supset U$ such that $\Gamma'\subset\subset U'$ and 
$\delta^2\!F((u,\Gamma'); U')$ is positive definite on $H^1_0(\Gamma')$. The existence of such a domain $U'$ 
is guaranteed by Corollary~\ref{c:mono2}. It is now sufficient to show that $(u,K)$ is an isolated $C^2$-local minimizer 
in $U'$ with respect to $\Gamma'$, since this implies in particular the thesis of Theorem~\ref{thm:suff}.
\end{remark}

In view of the previous remark we may assume in the remaining part of the section that 
$$
\Gamma\subset\subset U.
$$
In order to prove Theorem~\ref{thm:suff} we need some auxiliary results, which are contained in
the next lemmas. 
For every $\delta>0$ we define the $\delta$-neighbourhood $(A)_{\delta}$ of an arbitrary 
set $A\subset \R^N$ as
\begin{equation}\label{ingra}
(A)_{\delta}:=\{x\in \R^N:\, \dist(x,A)<\delta\}.
\end{equation}
For notational convenience we set
$$
{\mathcal D}_\delta:=\{\Phi \in C^2(\overline U;\overline U):\ \Phi \text{ diffeomorphism}, \
\Phi=I \text{ on } (K\cap U)\setmeno \Gamma,\ 0<\|\Phi - I\|_{C^2}\le\delta\}
$$ 
for every $\delta>0$. 
We fix $\delta_0>0$ such that the orthogonal projection $\Pi_{\Gamma\!_r}$ on $\Gamma_r$
is well defined (and smooth) in $(\Gamma)_{\delta_0}\cap U$ and  for every $\Phi\in {\mathcal D}_{\delta_0}$
there exists a unique $\varphi\in C^2_0(\Gamma)$ such that 
$$
\Gamma_\Phi=\Phi(\Gamma)=\{x+\varphi(x)\nu(x):\ x\in\Gamma \}.
$$ 
We can then define in  $(\Gamma )_{\delta_0}\cap U$ the vector field
\begin{equation}\label{xfi}
\tilde X_\Phi:=(\varphi\nu)\circ\Pi_{\Gamma\!_r}
\end{equation}
for every $\Phi\in {\mathcal D}_{\delta_0}$. 
Moreover, we consider the bilinear form
\begin{equation}\label{tnorm}
(\vartheta,\psi)_{\sim,\Phi}:=\int_{\Gamma_\Phi} a_\Phi \vartheta\psi\, d\hn +\int_{\Gamma_\Phi}
\nabla_{\Gamma_\Phi}\vartheta{\;\cdot\,}\nabla_{\Gamma_\Phi}\psi \, d\hn
\end{equation}
for every $\vartheta,\psi\in H^1_0(\Gamma_\Phi)$, where
$$
a_\Phi:=  2\B_\Phi[\nabla_{\Gamma_\Phi}u^+_\Phi, \nabla_{\Gamma_\Phi}u^+_\Phi]
-2\B_\Phi[\nabla_{\Gamma_\Phi}u^-_\Phi, \nabla_{\Gamma_\Phi}u^-_\Phi]
-|\B_\Phi|^2
$$
(here and in the sequel we use the same notation as in the previous sections).

In the next lemma we prove that the $H^1$-norm on $\Gamma_\Phi$ can be controlled in terms
of the norm $\|\cdot\|_{\sim,\Phi}$, uniformly with respect to $\Phi$.

\begin{lemma}\label{lm:disc}
There exist $C_1>0$ and  $\delta_1\in(0,\delta_0)$ such that 
for every $\Phi\in {\mathcal D}_{\delta_1}$ we have
\begin{equation}\label{eq48}
\|\psi\|_{H^1(\Gamma_\Phi)} \le C_1\|\psi\|_{\sim,\Phi}
\end{equation}
for every $\psi\in H^1_0(\Gamma_\Phi)$.
\end{lemma}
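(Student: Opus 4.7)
The strategy is to argue by contradiction and reduce the inequality on the perturbed manifolds $\Gamma_\Phi$ to the fixed manifold $\Gam$, where the corresponding estimate $\|\varphi\|_{H^1(\Gam)}\le C_0\|\varphi\|_\sim$ is already available from Proposition~\ref{prop:ps}. Indeed, by Theorem~\ref{thm:eq1} the hypothesis \eqref{var2+} of the theorem implies condition \eqref{hyp1}, so Proposition~\ref{prop:ps} applies. The transfer between $\Gamma_\Phi$ and $\Gam$ will be effected via pullback under the diffeomorphism $\Phi$.

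Assume for contradiction that no such $C_1,\delta_1$ exist. Then there are sequences $\Phi_n\in{\mathcal D}_{1/n}$ and $\psi_n\in H^1_0(\Gamma_{\Phi_n})$ with $\|\psi_n\|_{H^1(\Gamma_{\Phi_n})}=1$ and $\|\psi_n\|_{\sim,\Phi_n}\to 0$. I would set $\tilde\psi_n:=\psi_n\circ\Phi_n\in H^1_0(\Gam)$ and use the area formula \eqref{farea} together with the chain rule for tangential gradients to rewrite $\|\psi_n\|_{H^1(\Gamma_{\Phi_n})}^2$ and $\|\psi_n\|_{\sim,\Phi_n}^2$ as integrals over $\Gam$ against $\tilde\psi_n$ and $\nabla_\Gam\tilde\psi_n$. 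The coefficients appearing in these rewritings---$J_{\Phi_n}$, the linear map that conjugates $\nabla_{\Gamma_{\Phi_n}}$ into $\nabla_\Gam$, and $a_{\Phi_n}\circ\Phi_n$---converge uniformly on $\Gam$ to their values at $\Phi=I$ as $\Phi_n\to I$ in $C^2$. Consequently one obtains estimates
$$
c_n\|\tilde\psi_n\|_{H^1(\Gam)}\le\|\psi_n\|_{H^1(\Gamma_{\Phi_n})}\le c_n'\|\tilde\psi_n\|_{H^1(\Gam)},\qquad d_n\|\tilde\psi_n\|_\sim\le\|\psi_n\|_{\sim,\Phi_n}\le d_n'\|\tilde\psi_n\|_\sim
$$
with $c_n,c_n',d_n,d_n'\to 1$.

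This reduces the situation on $\Gam$ to essentially the one encountered inside the proof of Proposition~\ref{prop:ps}: $\|\tilde\psi_n\|_{H^1(\Gam)}$ stays bounded and bounded away from $0$, while $\|\tilde\psi_n\|_\sim\to 0$. Extracting a subsequence, $\tilde\psi_n\wto\tilde\psi$ weakly in $H^1_0(\Gam)$ and strongly in $L^2(\Gam)$. Weak lower semicontinuity of the Dirichlet integral and strong $L^2$ convergence yield $\|\tilde\psi\|_\sim\le\liminf_n\|\tilde\psi_n\|_\sim=0$, whence $\tilde\psi=0$ by \eqref{hyp1}. Then $\tilde\psi_n\to 0$ in $L^2(\Gam)$, and combining this with $\|\tilde\psi_n\|_\sim\to 0$ and the boundedness of $a$ forces $\int_\Gam|\nabla_\Gam\tilde\psi_n|^2\,d\hn\to 0$. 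Hence $\tilde\psi_n\to 0$ strongly in $H^1_0(\Gam)$, contradicting the uniform lower bound on $\|\tilde\psi_n\|_{H^1(\Gam)}$.

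The main obstacle is the uniform convergence $a_{\Phi_n}\circ\Phi_n\to a$, which in turn rests on the continuous dependence of $\nabla_{\Gamma_{\Phi_n}}u^\pm_{\Phi_n}$ and $\B_{\Phi_n}$ on $\Phi_n$ in $C^0$. This is the delicate point, because $u_\Phi$ is defined implicitly through the transmission problem \eqref{uphi} on the perturbed manifold; the required stability and regularity up to $\Gamma_\Phi$ is precisely what the results collected in Section~\ref{appendix} are designed to provide. Once those results are granted, the geometric ingredients reduce to smooth changes of variables and the argument is a clean compactness reasoning.
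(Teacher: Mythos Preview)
Your overall strategy is sound and leads to a correct proof, but it differs from the paper's and contains one imprecise step worth flagging.

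The paper proceeds directly rather than by contradiction: it first chains the area formula with Proposition~\ref{prop:ps} to obtain
\[
\|\psi\|_{H^1(\Gamma_\Phi)}^2 \le MC\,\|\psi\circ\Phi\|_\sim^2,
\]
and then compares the two bilinear forms on $\Gamma$ to get
\[
\|\psi\circ\Phi\|_\sim^2 \le \|\psi\|_{\sim,\Phi}^2 + \e(1+\e)\,\|\psi\|_{H^1(\Gamma_\Phi)}^2
\]
for $\Phi$ close to $I$; the last term is then absorbed into the left-hand side. No compactness is used.

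In your argument the displayed multiplicative bound $d_n\|\tilde\psi_n\|_\sim \le \|\psi_n\|_{\sim,\Phi_n}$ does \emph{not} follow merely from the uniform convergence of the coefficients, because both quadratic forms contain the sign-indefinite zero-order term $a$ (resp.\ $a_\Phi$). What the coefficient convergence yields directly is the additive estimate
\[
\big|\,(\psi_n,\psi_n)_{\sim,\Phi_n} - (\tilde\psi_n,\tilde\psi_n)_\sim\,\big| \le \e_n\,\|\tilde\psi_n\|_{H^1(\Gam)}^2,\qquad \e_n\to 0.
\]
This is enough for your purposes: combined with the boundedness of $\|\tilde\psi_n\|_{H^1(\Gam)}$ (from the first displayed equivalence, which is unproblematic since the $H^1$-integrands are positive) and with $(\psi_n,\psi_n)_{\sim,\Phi_n}\to 0$, it gives $(\tilde\psi_n,\tilde\psi_n)_\sim\to 0$, and then your compactness argument (or a direct appeal to Proposition~\ref{prop:ps}) finishes the job. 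Note also that once you invoke Proposition~\ref{prop:ps} to turn the additive estimate into a multiplicative one, the contradiction is immediate and the weak-convergence step becomes redundant; so your proof can be shortened to essentially the paper's direct argument.
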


\begin{proof}
As $(u,K)$ satisfies the second order condition \eqref{var2+}, by Theorem~\ref{thm:eq1} and 
Proposition~\ref{prop:ps} we have that there exists a constant $C>0$ such that
\begin{equation}\label{eq48bis}
\|\psi\|_{H^1(\Gamma)}^2 \le C\|\psi\|_\sim^2
\end{equation}
for every $\psi\in H^1_0(\Gamma)$.
Setting $M:=\sup_{\Phi\in{\mathcal D}_{\delta_0}} \sup_{x\in\Gam} 
J_\Phi(x)\, (|(D_\Gamma \Phi)^{-T}(x)|^2+1)$,
by the area formula \eqref{farea} we obtain
\begin{eqnarray}
\|\psi\|_{H^1(\Gamma_\Phi)}^2 &=&
\int_\Gamma \big(|\psi\circ\Phi|^2+ |(\nabla_{\Gamma_\Phi}\psi) \circ\Phi|^2\big)J_\Phi\, d\hn
\nonumber
\\
& = & \int_\Gamma\big(|\psi\circ\Phi|^2+ |(D_\Gamma \Phi)^{-T} [\nabla_\Gamma (\psi\circ\Phi)]|^2 
\big)J_\Phi\, d\hn
\nonumber
\\
& \le & M \int_\Gamma \big(|\psi\circ\Phi|^2+ |\nabla_\Gamma (\psi\circ\Phi)|^2\big)\, d\hn
\le MC \|\psi\circ\Phi \|_\sim^2,
\label{farr}
\end{eqnarray}
where in the last inequality we used \eqref{eq48bis}.

Let $\e$ be a positive constant that will be chosen later.
By classical elliptic estimates (see, e.g., \cite[Theorem~3.17]{Tro}) we have that $u_\Phi^\pm$ is
$C^{1,\alpha}$ up to $\Gamma_\Phi$ for some $\alpha\in(0,1)$, with $C^{1,\alpha}$-norm uniformly bounded
with respect to $\Phi\in {\mathcal D}_{\delta_0}$. It follows that the map 
$\Phi\mapsto (a_\Phi\circ\Phi) \,J_\Phi$ 
is continuous from ${\mathcal D}_{\delta_0}$, endowed with the $C^2$ topology,
into $L^\infty(\Gamma)$. In particular, there esists $\delta_1\in(0,\delta_0)$ such that
$\|(a_\Phi\circ\Phi) \,J_\Phi-a \|_{L^\infty(\Gamma)} <\e$
for every $\Phi\in {\mathcal D}_{\delta_1}$, 
and, taking $\delta_1$ smaller, if needed, we can also guarantee that
$\sup_{\Phi\in{\mathcal D}_{\delta_1}} \sup_{x\in\Gam} J_\Phi^{-1}(x)\, |(D_\Gamma \Phi)^T(x)|^2
<1 +\e$ and $\sup_{\Phi\in{\mathcal D}_{\delta_1}} \sup_{x\in\Gam} J_\Phi^{-1}(x)<1+\e$.
Hence, using also the area formula \eqref{farea}, we have
\begin{eqnarray*}
\|\psi\circ\Phi \|_\sim^2
& \le &
\int_\Gamma \!\!(a_\Phi\circ\Phi)|\psi\circ\Phi|^2J_\Phi\, d\hn {+}\int_\Gamma\!\! |\nabla_\Gamma(\psi\circ\Phi)|^2\, d\hn
{+}\e \int_\Gamma\!\! |\psi\circ\Phi|^2\, d\hn 
\\
& \le &
\int_{\Gamma_\Phi} a_\Phi |\psi|^2 \, d\hn +(1+\e)\int_{\Gamma_\Phi} |\nabla_{\Gamma_\Phi}\psi
|^2\, d\hn +\e(1+\e)\int_{\Gamma_\Phi}  |\psi|^2 \, d\hn
\\
& \le &
\|\psi \|_{\sim,\Phi}^2 + \e(1+\e)\int_{\Gamma_\Phi}( |\psi|^2+ |\nabla_{\Gamma_\Phi}\psi
|^2)\, d\hn.
\end{eqnarray*}
Choosing $\e>0$ such that $MC\e(1+\e)=\frac12$, the thesis follows from
\eqref{farr} and the previous inequality with $C_1:=\sqrt{2MC}$.
\end{proof}

From the previous lemma, Proposition~\ref{prop:ps}, and Remark~\ref{agen} it follows that
for every $\Phi\in{\mathcal D}_{\delta_1}$
the bilinear form $(\cdot,\cdot)_{\sim,\Phi}$ is a scalar product on $H^1_0(\Gamma_\Phi)$,
so that, similarly to \eqref{tildeT}, we can introduce
the operator $T_\Phi: H^1_0(\Gamma_\Phi)\to H^1_0(\Gamma_\Phi)$   defined by
\begin{equation}\label{tildeTt}
(\tu_\Phi \vartheta,\psi)_{\sim,\Phi} = -2 \int_{\Gamma_\Phi} (\vP^+\div_{\Gamma_\Phi} (\psi\nabla_{\Gamma_\Phi} u_\Phi^+)
-\vP^-\div_{\Gamma_\Phi}(\psi\nabla_{\Gamma_\Phi} u_\Phi^-))\,d\hn,
\end{equation}
where $\vP \in L^{1,2}_0(U\setmeno K_\Phi;\partial U)$ is the solution of
$$
\int_U \nabla \vP{\,\cdot\,} \nabla z\, dx+\int_{\Gamma_{\Phi}} (\div_{\Gamma_{\Phi}} 
( \vartheta \nabla_{\Gamma_{\Phi}} u^+_{\Phi}  )z^+-\div_{\Gamma_{\Phi}} 
( \vartheta\nabla_{\Gamma_{\Phi}} u^-_{\Phi}  )z^- )\, d\hn=0
$$
for all $z\in L^{1,2}_0(U\setmeno K_{\Phi}; \partial U)$.
By Proposition~\ref{Tcpt} and Remark~\ref{agen} the operator
$T_\Phi$ is monotone, compact, and self-adjoint for every $\Phi\in{\mathcal D}_{\delta_1}$.
Moreover, we have the following property.

\begin{lemma}\label{lm:Pnorm}
Assume $N\leq3$.
For $\Phi\in{\mathcal D}_{\delta_1}$ let $\lambda_{1,\Phi}$ denote the norm of $T_\Phi$ on $H^1_0(\Gamma_\Phi)$ endowed with
the norm $\|\cdot\|_{\sim,\Phi}$ and let $\lambda_1:=\lambda_{1,I}$. Then
\begin{equation}\label{limsup}
\limsup_{\|\Phi- I\|_{C^2}\to 0}\,\lambda_{1,\Phi}\le \lambda_1.
\end{equation}
\end{lemma}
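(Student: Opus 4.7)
The plan is a compactness-and-convergence argument, based on pulling the problem back to the reference configuration $(\Gamma,U\setmeno K)$ via $\Phi$. Fix any sequence $(\Phi_n)\subset\mathcal D_{\delta_1}$ with $\|\Phi_n-I\|_{C^2}\to 0$; it suffices to show that, along a subsequence, $\lim_n\lambda_{1,\Phi_n}\le\lambda_1$. Since $T_{\Phi_n}$ is compact and self-adjoint on $(H^1_0(\Gamma_{\Phi_n}),\|\cdot\|_{\sim,\Phi_n})$, I pick an eigenfunction $\vartheta_n\in H^1_0(\Gamma_{\Phi_n})$ with $\|\vartheta_n\|_{\sim,\Phi_n}=1$ and $T_{\Phi_n}\vartheta_n=\lambda_{1,\Phi_n}\vartheta_n$; setting $v_n:=v_{\vartheta_n,\Phi_n}$, the analogue of \eqref{mj2} yields
$$\lambda_{1,\Phi_n}=2\int_U|\nabla v_n|^2\,dx.$$

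I then transport everything back to the reference configuration by defining $\varphi_n:=\vartheta_n\circ\Phi_n\in H^1_0(\Gamma)$ and $w_n:=v_n\circ\Phi_n\in L^{1,2}_0(U\setmeno K;\partial U)$. Since $\Phi_n\to I$ in $C^2$ and $u_{\Phi_n}\to u$ in $C^{1,\alpha}$ up to $\Gamma_{\Phi_n}$ (the bound recalled in the proof of Lemma~\ref{lm:disc}), one has uniform convergence on $\Gamma$ of $(a_{\Phi_n}\circ\Phi_n)J_{\Phi_n}\to a$, $(D_\Gamma\Phi_n)^{-T}\to I$, and $J_{\Phi_n}\to 1$. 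The area formula then gives $\|\varphi_n\|_\sim^2=\|\vartheta_n\|_{\sim,\Phi_n}^2+o(1)=1+o(1)$; by Proposition~\ref{prop:ps} the sequence $\{\varphi_n\}$ is bounded in $H^1_0(\Gamma)$, and an analogous change of variables in $U$ bounds $\{w_n\}$ in $L^{1,2}_0(U\setmeno K;\partial U)$. Passing to a subsequence, $\varphi_n\rightharpoonup\varphi$ weakly in $H^1_0(\Gamma)$ and $w_n\rightharpoonup w$ weakly in $L^{1,2}_0(U\setmeno K;\partial U)$. At this point the hypothesis $N\le 3$ enters crucially: since $\dim\Gamma\le 2$, Rellich's theorem on $\Gamma$ gives strong convergence $\varphi_n\to\varphi$ in $L^p(\Gamma)$ for every $p<\infty$, and compactness of the trace operator yields $w_n^\pm\to w^\pm$ strongly in $L^2(\Gamma)$.

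The next step is to identify the limit: inserting into the equation defining $v_n$ a test function of the form $z\circ\Phi_n^{-1}$ with $z\in L^{1,2}_0(U\setmeno K;\partial U)$, pulling back to $\Gamma$, and passing to the limit using the convergences above together with the strong $L^p$-convergence of $\varphi_n$, I show that $w$ solves \eqref{vphi} with source $\varphi$; by uniqueness $w=v_\varphi$. Strong convergence of the energies follows by testing the equation for $v_n$ with $v_n$ itself: integrating by parts tangentially gives
$$\int_U|\nabla v_n|^2\,dx=\int_{\Gamma_{\Phi_n}}\vartheta_n\bigl(\nabla_{\Gamma_{\Phi_n}}u_{\Phi_n}^+\cdot\nabla_{\Gamma_{\Phi_n}}v_n^+ -\nabla_{\Gamma_{\Phi_n}}u_{\Phi_n}^-\cdot\nabla_{\Gamma_{\Phi_n}}v_n^-\bigr)\,d\hn,$$
and the right-hand side passes to the limit because, after pullback to $\Gamma$, the factor $\varphi_n(\nabla u_{\Phi_n}^\pm\circ\Phi_n)$ converges strongly in $H^s(\Gamma)$ for every $s<1$ (the $u$-factor converges uniformly, while $\varphi_n$ converges strongly in every $L^p$ and is bounded in $H^1$), and it is paired with the tangential trace gradients of $v_n$, which are bounded and weakly convergent in $H^{-1/2}(\Gamma)$. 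Combined with the analogue of \eqref{mj2} applied to $v_\varphi$, this gives
$$\lim_n\lambda_{1,\Phi_n}=2\int_U|\nabla v_\varphi|^2\,dx=(T\varphi,\varphi)_\sim\le\lambda_1\|\varphi\|_\sim^2\le\lambda_1,$$
using \eqref{max} together with the weak lower semicontinuity inequality $\|\varphi\|_\sim\le\liminf_n\|\varphi_n\|_\sim=1$. The main obstacle is precisely the closure of this $H^1/H^{-1/2}$ duality pairing in the strong-convergence step, which needs the improved Sobolev embeddings on $\Gamma$ available only when $\dim\Gamma\le 2$, and this is why the restriction $N\le 3$ is imposed.
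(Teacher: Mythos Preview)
Your strategy---pull back via $\Phi_n$, extract weak limits, and pass to the limit in the energy identity---is essentially the paper's, which instead compares the pulled-back solution $\tilde w_n:=v_n\circ\Phi_n$ directly with $v_{\tilde\varphi_n}$ (the solution of \eqref{vphi} with data $\tilde\varphi_n:=\varphi_n/\|\varphi_n\|_\sim$) and shows $\|\nabla(v_{\tilde\varphi_n}-\tilde w_n)\|_{L^2}\to 0$; both routes hinge on the same convergence of the pulled-back surface data.

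The gap is in your justification of the key pairing step. You assert that $\varphi_n(\nabla u_{\Phi_n}^\pm\circ\Phi_n)$ converges strongly in $H^s(\Gamma)$ for every $s<1$ because ``the $u$-factor converges uniformly, while $\varphi_n$ converges strongly in every $L^p$ and is bounded in $H^1$''. Uniform convergence of $g_n:=\nabla u_{\Phi_n}^\pm\circ\Phi_n$ is not enough: without controlling $\nabla_\Gamma g_n$, the product $\varphi_n g_n$ is not even bounded in $H^1$, so interpolation is unavailable and the duality with the merely weakly $H^{-1/2}$-convergent $\nabla_\Gamma w_n^\pm$ does not close. What is actually needed is $C^{0,\alpha}$-convergence of $g_n$ for some $\alpha>\tfrac12$; the $C^{1,\alpha}$ bound you quote from Lemma~\ref{lm:disc} yields only \emph{some} $\alpha$, possibly $\le\tfrac12$. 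The paper upgrades this by writing the elliptic equation solved by $y_n:=u_{\Phi_n}\circ\Phi_n-u$ and invoking $W^{2,p}$ estimates to obtain $g_n\to\nabla_\Gamma u^\pm$ in $C^{0,\alpha}$ for \emph{every} $\alpha<1$, and then estimates the Gagliardo $H^{1/2}$-seminorm of $\tilde\varphi_n\psi_n^\pm$ (with $\psi_n^\pm$ the error in $g_n$) explicitly. The restriction $N\le3$ enters precisely there: one needs $\tilde\varphi_n$ bounded in $L^p(\Gamma)$ with $(2\alpha-1)p>2N-2$, which the Sobolev embedding on the $(N{-}1)$-dimensional manifold $\Gamma$ supplies for arbitrarily large $p$ only when $N\le3$. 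Your Rellich/$L^p$ remark points at the right phenomenon, but the argument as written is incomplete at exactly this point.
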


\begin{remark}
It is actually possible to prove that $\lambda_{1,\Phi}$ converges to $\lambda_1$, as 
$\|\Phi- I\|_{C^2}\to 0$, but this is not needed in the sequel.
\end{remark}

\begin{proof}[Proof of Lemma~\ref{lm:Pnorm}]
Assume by contradiction that \eqref{limsup} fails. 
Then there exist $\lambda_{\infty}>\lambda_1$, $\Phi_n\to I$ in $C^2$-norm,
$\varphi_n\in C^{\infty}_c(\Gamma_{\Phi_n})$ with $\|\varphi_n\|_{\sim,\Phi_n}=1$, 
and $w_n\in L^{1,2}_0(U\setmeno K_{\Phi_n};\partial U)$ solution to
$$
\int_U \nabla w_n{\,\cdot\,} \nabla z\, dx+\int_{\Gamma_{\Phi_n}} (\div_{\Gamma_{\Phi_n}} 
( \varphi_n \nabla_{\Gamma_{\Phi_n}} u^+_{\Phi_n}  )z^+-\div_{\Gamma_{\Phi_n}} 
( \varphi_n \nabla_{\Gamma_{\Phi_n}} u^-_{\Phi_n}  )z^- )\, d\hn=0
$$
for all $z\in L^{1,2}_0(U\setmeno K_{\Phi_n}; \partial U)$, 
such that 
\begin{equation}\label{absurd}
(T_{\Phi_n}\varphi_n, \varphi_n)_{\sim,\Phi_n}=2\int_U|\nabla w_n|^2\, dx\to \lambda_{\infty}>\lambda_1.
\end{equation}
Let $\tilde w_n:=w_n\circ\Phi_n$. Then 
$\tilde w_n\in L^{1,2}_0(U\setmeno K;\partial U)$ satisfies
\begin{multline}\nonumber
\int_U A_n[\nabla \tilde w_n,\nabla z]\, dx+\int_{\Gamma} (\div_{\Gamma_{\Phi_n}} 
( \varphi_n \nabla_{\Gamma_{\Phi_n}} u^+_{\Phi_n}  ))\circ\Phi_nJ_{\Phi_n}z^+\, d\hn
\\
{}-\int_{\Gamma}(\div_{\Gamma_{\Phi_n}} 
( \varphi_n \nabla_{\Gamma_{\Phi_n}} u^-_{\Phi_n}  ))\circ\Phi_nJ_{\Phi_n}z^-\, d\hn=0
\end{multline}
for all $z\in L^{1,2}_0(U\setmeno K; \partial U)$,
where $A_n:=\frac{D\Psi_nD\Psi_n^T}{{\det D\Psi_n}}\circ \Phi_n$ with $\Psi_n:=\Phi_n^{-1}$, 
while $J_{\Phi_n}$ is the $(N-1)$-dimensional Jacobian of $\Phi_n$. Moreover, it is easily seen that
\begin{equation}\label{easy}
\lim_n \,2\int_U |\nabla \tilde w_n|^2\, dx=\lim_n\, 2\int_U |\nabla w_n|^2\, dx =\lambda_{\infty}. 
\end{equation}
We finally set $\tilde \varphi_n:=c_n \varphi_n\circ \Phi_n$, where 
\begin{equation}\label{cn}
c_n:=\|\varphi_n\circ\Phi_n\|_{\sim}^{-1}\to 1,
\end{equation}
and we consider the function $v_{\tilde\varphi_n}$ defined by \eqref{vphi}
with $\varphi$ replaced by $\tilde\varphi_n$.
To conclude the proof of the lemma it will be enough to show that
\begin{equation}\label{enough}
\lim_n\int_U|\nabla(v_{\tilde\varphi_n}-\tilde w_n)|^2\, dx=0.
\end{equation}
Indeed, by \eqref{absurd} and \eqref{easy} this would imply
$$
\lambda_1\geq \lim_n\,(T\tilde \varphi_n, \tilde \varphi_n)_{\sim}
=\lim_n\, 2\int_U |\nabla v_{\tilde\varphi_n}|^2\, dx
=\lim_n\, 2\int_U |\nabla \tilde w_n|^2\, dx=\lambda_{\infty}>\lambda_1,
$$
which gives a contradiction.

In order to prove \eqref{enough} we observe that $z_n:=v_{\tilde\varphi_n}-\tilde w_n$
solves the problem
$$
\begin{array}{l}
z_n\in L^{1,2}_0(U\setmeno K; \partial U),
\vphantom{\displaystyle\int}\\
\displaystyle\int_UA_n [\nabla z_n,\nabla z]\, dx-\int_U (A_n-I)[\nabla v_{\tilde\varphi_n},\nabla z]\, dx
+\int_{\Gamma}\big(h_n^+z^+ -h_n^- z^-\big)\, d\hn=0
\end{array}
$$
for all $z\in L^{1,2}_0(U\setmeno K; \partial U)$,
where $h^\pm_n:=\div_{\Gamma}(\tilde \varphi_n\nabla_{\Gamma}u^\pm)
-\big( ( \div_{\Gamma_{\Phi_n}}(\varphi_n \nabla_{\Gamma_{\Phi_n}}u^\pm_{\Phi_n}) )
\circ\Phi_n\big)J_{\Phi_n}$. 
Since $A_n-I\to 0$ in $C^1$-norm and $v_{\tilde\varphi_n}$ is bounded in 
$L^{1,2}_0(U\setmeno K; \partial U)$, we have that $(A_n-I)[\nabla v_{\tilde\varphi_n}]$ 
converges to $0$ strongly in $L^2(U;\R^N)$.
Hence \eqref{enough} follows once we show that $h^\pm_n\to 0$ in $H^{-\frac12}(\Gamma)$.

To this aim let $\zeta\in C^{\infty}_c(\Gamma)$. Then we have
\begin{eqnarray*}
\lefteqn{\int_{\Gamma}\big( (\div_{\Gamma_{\Phi_n}}(\varphi_n \nabla_{\Gamma_{\Phi_n}}u^\pm_{\Phi_n}))
\circ\Phi_n\big)J_{\Phi_n}\zeta\, d\hn
=\int_{\Gamma_{\Phi_n}}\div_{\Gamma_{\Phi_n}}(\varphi_n\nabla_{\Gamma_{\Phi_n}}u^\pm_{\Phi_n})
(\zeta\circ\Psi_n)\, d\hn}
\\
& = &
{}-\int_{\Gamma_{\Phi_n}}\varphi_n\nabla_{\Gamma_{\Phi_n}}u^\pm_{\Phi_n}{\,\cdot\,}
\nabla_{\Gamma_{\Phi_n}}(\zeta\circ\Psi_n)\,d\hn
\\
& = &
{}-\int_{\Gamma_{\Phi_n}}\varphi_n(D_{\Gamma}\Phi_n)^{-T}\!\!\circ\Psi_n\,
[\nabla_{\Gamma}(u^\pm_{\Phi_n}\circ\Phi_n)\circ\Psi_n] {\,\cdot\,} 
(D_{\Gamma_{\Phi_n}}\Psi_n)^T[(\nabla_{\Gamma}\zeta)\circ\Psi_n]\,d\hn
\\
& = &
{}-\int_{\Gamma} c_n^{-1}\tilde\varphi_n
(D_{\Gamma}\Phi_n)^{-1}(D_{\Gamma}\Phi_n)^{-T}[\nabla_{\Gamma}(u^\pm_{\Phi_n}\circ\Phi_n),
\nabla_{\Gamma}\zeta] J_{\Phi_n}\, d\hn
\\
& = &
\int_{\Gamma} c_n^{-1}\div_{\Gamma}(\tilde\varphi_nJ_{\Phi_n}
(D_{\Gamma}\Phi_n)^{-1}(D_{\Gamma}\Phi_n)^{-T}[\nabla_{\Gamma}(u^\pm_{\Phi_n}\circ\Phi_n)]) 
\zeta\, d\hn,
\end{eqnarray*}
where we repeatedly used the area formula \eqref{farea}.
It follows that
\begin{equation}\label{hnbis}
h_n^\pm=\div_{\Gamma}(\tilde\varphi_n\nabla_{\Gamma}u^\pm- 
c_n^{-1}\tilde\varphi_nJ_{\Phi_n}
(D_{\Gamma}\Phi_n)^{-1}(D_{\Gamma}\Phi_n)^{-T}[\nabla_{\Gamma}(u^\pm_{\Phi_n}\circ\Phi_n)]).
\end{equation}

We claim that for every $\alpha\in(0,1)$
\begin{equation}\label{claimbis}
\nabla_{\Gamma}(u^\pm_{\Phi_n}\circ\Phi_n)\to \nabla_{\Gamma}u^\pm\quad
\text{in }C^{0,\alpha}(\overline\Gamma;\R^N).
\end{equation} 
To prove this we observe that $y_n:=u_{\Phi_n}\circ\Phi_n-u$ solves
$$
\begin{array}{l}
y_n\in L^{1,2}_0(U\setmeno K; \partial U),\vphantom{\displaystyle\int}
\\
\displaystyle\int_UA_n [\nabla y_n,\nabla z]\, dx
+\int_U (A_n-I)[\nabla u,\nabla z]\, dx=0
\end{array}
$$
for all $z\in L^{1,2}_0(U\setmeno K; \partial U)$. 
As $A_n\to I$ in $C^1(\overline U;\R^{N{\times}N})$, 
we deduce by
standard elliptic estimates (see, e.g., \cite[Theorem~3.17]{Tro}) 
that $y_n\to 0$ in $W^{2,p}(V\setmeno \Gamma)$ for every $p$ and for a suitable
neighbourhood $V$ of $\Gamma$. This provides \eqref{claimbis}.  

It is now convenient to set $\psi_n^\pm:=c_n^{-1}J_{\Phi_n}
(D_{\Gamma}\Phi_n)^{-1}(D_{\Gamma}\Phi_n)^{-T}[\nabla_{\Gamma}(u^\pm_{\Phi_n}\circ\Phi_n)]-\nabla_{\Gamma}u^\pm$. 
As the matrix $J_{\Phi_n}(D_{\Gamma}\Phi_n)^{-1}(D_{\Gamma}\Phi_n)^{-T}$ converge to $I$ in 
$C^1(\overline{\Gamma};\R^{N{\times}N})$, claim \eqref{claimbis} and 
the convergence in \eqref{cn} imply that for every $\alpha\in(0,1)$
\begin{equation}\label{psin0}
\psi_n^\pm\to 0 \quad
\text{in }C^{0,\alpha}(\overline\Gamma;\R^N).
\end{equation}
Let us fix $\alpha\in(0,1)$ and $p>1$ such that $(2\alpha-1)p>2N-2$. 
As $\tilde \varphi_n$ is bounded in $H^1_0(\Gamma)$ and $N\leq3$, by the
Sobolev imbedding theorem $\tilde \varphi_n$ is bounded in $L^p(\Gamma)$, too.
Adding and subtracting the term $\tilde\varphi_n(x)\psi_n^\pm(y)$ and
using the H\"older continuity of $\psi^\pm_n$, we can estimate the Gagliardo
$H^{\frac12}$-seminorm of $\tilde\varphi_n\psi_n^\pm$ as follows:
\begin{eqnarray*}
\lefteqn{\int_\Gamma\!\int_\Gamma \frac{|\tilde\varphi_n(x)\psi_n^\pm(x)-
\tilde\varphi_n(y)\psi_n^\pm(y)|^2
}{|x-y|^N}\, d\hn\!(x)\,d\hn\!(y)} 
\\
& \leq &
2\|\psi_n^\pm\|_{L^\infty(\Gamma)}^2\|\tilde\varphi_n\|_{H^{1\!/2}(\Gamma)}^2
+ 2 \|\psi_n^\pm\|_{C^{0,\alpha}(\Gamma)}^2 
\int_\Gamma\! \int_\Gamma |\tilde\varphi_n(x)|^2 |x-y|^{2\alpha-N}\, d\hn\!(x)\,d\hn\!(y)
\\
& \leq &
2\|\psi_n^\pm\|_{C^{0,\alpha}(\Gamma)}^2\Big(
\|\tilde\varphi_n\|_{H^{1\!/2}(\Gamma)}^2
\\
& &
\hphantom{2\|\psi_n^\pm\|_{C^{0,\alpha}(\Gamma)}^2\Big(} + \hn(\Gamma)^{\frac{2}{p}}\|\tilde\varphi_n\|_{L^p(\Gamma)}^2
\Big(\int_\Gamma\!\int_\Gamma |x-y|^{(2\alpha-N)\frac{p}{p-2}}\, d\hn\!(x)\,d\hn\!(y)\Big)^{\!\frac{p-2}{p}}\Big).
\end{eqnarray*}
By our choice of $\alpha$ and $p$ the last integral in the previous formula is finite. Thus, using
the boundedness of $\tilde\varphi_n$ in $H^{\frac12}(\Gamma)$ and in $L^p(\Gamma)$,
we deduce from \eqref{psin0} that
$$
\tilde \varphi_n\psi_n^\pm\to 0
\quad\text{in } H^{\frac12}(\Gamma;\R^N),
$$
which in turn gives $h^\pm_n\to 0$ in $H^{-\frac12}(\Gamma)$ by 
the definition of $\psi_n^\pm$. 
\end{proof}

\begin{remark}\label{rmk:N=4}
The assumption $N\leq3$ in Lemma~\ref{lm:Pnorm} can be removed if we require
$\Phi$ to converge to $I$ in the $C^{2,\alpha}$-norm for some $\alpha\in(0,1)$.
Indeed, arguing by contradiction as before, the proof reduces to show that
$h_n^\pm\to 0$ in $H^{-\frac12}(\Gamma)$. Since $\Phi_n$ converge now to $I$
with respect to the $C^{2,\alpha}$-norm, we deduce by standard elliptic estimates that
$y_n\to 0$ in $C^{2,\alpha}$-norm up to $\Gamma$, so that 
$\nabla_{\Gamma}(u^\pm_{\Phi_n}\circ\Phi_n)\to \nabla_{\Gamma}u^\pm$
in $C^{1,\alpha}(\overline\Gamma;\R^N)$. 
As $J_{\Phi_n}(D_{\Gamma}\Phi_n)^{-1}(D_{\Gamma}\Phi_n)^{-T}$ converge to $I$ in $C^{1,\alpha}$-norm, we have that $\psi_n^\pm\to 0$
in $C^{1,\alpha}(\overline\Gamma;\R^N)$, hence $\tilde \varphi_n\psi_n^\pm\to 0$
in $H^1(\Gamma;\R^N)$, which implies $h_n^\pm\to 0$ in $H^{-\frac12}(\Gamma)$.
\end{remark}

\begin{proof}[Proof of Theorem~\ref{thm:suff}]
First of all, we note that it is enough to show that there exist $\delta\in (0, \delta_1)$ and $c>0$ 
such that 
for every $\Phi\in{\mathcal D}_\delta\cap C^{\infty}(\overline U; \overline U)$, with 
$\supp(\Phi-I)\cap \Gamma\subset\subset\Gamma$ and $\Gamma_\Phi\neq\Gamma\cap U$, 
\begin{equation}\label{minP}
F((u,K);U)< F((u_\Phi, K_\Phi); U)-c\|\tilde X_\Phi{\,\cdot\,}\nu_\Phi\|^2_{H^1(\Gamma_\Phi)},
\end{equation}
where, we recall, $K_\Phi=\Phi(K\cap U)$ and $\tilde X_\Phi$
is defined in \eqref{xfi}.  
Indeed, the statement would then follow by approximating in the $C^2$-norm
any $\Phi\in{\mathcal D}_\delta$ with diffeormophisms
having the properties above.

The strategy will be the following.
Given $\Phi\in{\mathcal D}_\delta\cap C^{\infty}(\overline U; \overline U)$ with $\delta\le\delta_1$, 
we consider an admissible flow $\Phi_t$ for $\Gamma$ in $U$ 
which coincides with $I+t\tilde X_\Phi$ in 
the $\delta$-neighbourhood of $\Gamma$. 
Setting $g_\Phi(t):=F((u_{\Phi_t},K_{\Phi_t});U)$, we shall show that
there exist $\delta\le\delta_1$ and $c>0$ such that
\begin{equation}\label{f2}
g_\Phi''(t)>2c\|\tilde X_\Phi{\,\cdot\,}\nu_\Phi\|^2_{H^1(\Gamma_\Phi)} 
\quad \text{for every } t\in[0,1] \text{ and every } \Phi\in{\mathcal D}_\delta\cap C^{\infty}(\overline U; \overline U).
\end{equation}
As $g_\Phi'(0)=0$, condition \eqref{f2} will then imply 
\begin{eqnarray*}
F((u,K);U)& = & g_\Phi(0) \hspace{2mm}= \hspace{2mm} g_\Phi(1)-\int_0^1(1-t)g_\Phi''(t)\, dt
\\
& < & g_\Phi(1)-2c\|\tilde X_\Phi{\,\cdot\,}\nu_\Phi\|^2_{H^1(\Gamma_\Phi)}
\int_0^1(1-t)\, dt
\\
& = & \vphantom{\int}
F((u_\Phi,K_\Phi);U)- c\|\tilde X_\Phi{\,\cdot\,}\nu_\Phi\|^2_{H^1(\Gamma_\Phi)},
\end{eqnarray*}
that is \eqref{minP}.

Let us prove \eqref{f2}. Using \eqref{tnorm}, \eqref{tildeTt}, 
and the fact that $X_{\Phi_t}=\tilde X_\Phi$, so that $Z_{\Phi_t}=0$,   
we have by Remark~\ref{remt} that for every $t\in[0,1]$
\begin{eqnarray}\nonumber
g_\Phi''(t) & = & {}- (T_{\Phi_t} (X_{\Phi_t}{\,\cdot\,}\nu_{\Phi_t}), 
X_{\Phi_t}{\,\cdot\,}\nu_{\Phi_t})_{\sim,\Phi_t}
+ \|X_{\Phi_t}{\,\cdot\,} \nu_{\Phi_t} \|_{\sim,\Phi_t}^2 
\\
& &
\label{eq411}
 \hspace{-1cm}
{}+\int_{\Gamma_{\Phi_t}}
f_t
( -2X^{\parallel}_{\Phi_t} {\,\cdot\,} 
\nabla_{\Gamma_{\Phi_t}} ( X_{\Phi_t} {\,\cdot\,} \nu_{\Phi_t} )
+ \B_{\Phi_t} [ X^{\parallel}_{\Phi_t} , X^{\parallel}_{\Phi_t} ] +H_{\Phi_t}(X_{\Phi_t} {\,\cdot\,} \nu_{\Phi_t})^2
)\, d\hn,
\end{eqnarray}
where we recall that $X^{\parallel}_{\Phi_t}$ stands for $(I-\nu_{\Phi_t}\otimes\nu_{\Phi_t})X_{\Phi_t}$
and $f_t=|\nabla_{\Gamma_{\Phi_t}} u_{\Phi_t}^- |^2 - |\nabla_{\Gamma_{\Phi_t}} u_{\Phi_t}^+|^2
+H_{\Phi_t}$.

As $(u,K)$ satisfies the second order condition \eqref{var2+}, 
it follows from Theorem~\ref{thm:eq1} that $\lambda_1=\lambda_{1,I}<1$.
Hence by Lemma~\ref{lm:Pnorm} there exists $\delta_2\in(0,\delta_1)$ such that
\begin{equation}\label{eq414}
\lambda_{1,\Phi}<\tfrac12(\lambda_1+1)<1
\end{equation}
for every $\Phi\in{\mathcal D}_{\delta_2}$. By taking $\delta_2$ smaller, if needed, we can 
also guarantee that 
\begin{equation}\label{also}
\tfrac12 \|\tilde X_\Phi{\,\cdot\,}\nu_\Phi\|^2_{H^1(\Gamma_\Phi)}
\leq \|X_{\Phi_t}{\,\cdot\,}\nu_{\Phi_t}\|^2_{H^1(\Gamma_{\Phi_t})}\leq 
2\|\tilde X_\Phi{\,\cdot\,}\nu_\Phi\|^2_{H^1(\Gamma_\Phi)}
\end{equation}
for every $\Phi\in{\mathcal D}_{\delta_2}$ and every $t\in[0,1]$.
Using the definition of $\lambda_{1,\Phi}$ and
invoking \eqref{eq48}, we deduce
\begin{eqnarray}
\lefteqn{
- (T_{\Phi_t} (X_{\Phi_t}{\,\cdot\,}\nu_{\Phi_t}), X_{\Phi_t}{\,\cdot\,}\nu_{\Phi_t})_{\sim,\Phi_t}
+ \|X_{\Phi_t}{\,\cdot\,} \nu_{\Phi_t} \|_{\sim,\Phi_t}^2
\geq
(1-\lambda_{1,\Phi_t})\|X_{\Phi_t}{\,\cdot\,} \nu_{\Phi_t} \|_{\sim,\Phi_t}^2}
\nonumber
\\
\label{eq415}
& > & \tfrac12 C_1^{-2}(1-\lambda_1)\|X_{\Phi_t}{\,\cdot\,} \nu_{\Phi_t} \|_{H^1(\Gamma_{\Phi_t})}^2
\geq \tfrac14 C_1^{-2}(1-\lambda_1)\|\tilde X_\Phi{\,\cdot\,}\nu_\Phi\|^2_{H^1(\Gamma_\Phi)},
\end{eqnarray}
where the last two inequalities follow from \eqref{eq414} and \eqref{also}.

Choosing $\delta_2$ smaller, if needed, we also have that
$\nu_\Phi=\nu\circ\Pi_{\Gamma\!_r}+\rho_\Phi$ with $\|\rho_\Phi\|_{C^1}<\frac12$
for every $\Phi\in{\mathcal D}_{\delta_2}$.
As $|\tilde X_\Phi|=|\tilde X_\Phi{\,\cdot\,}(\nu\circ\Pi_{\Gamma\!_r})|$, we deduce that
$$
|\tilde X_{\Phi}|\leq |\tilde X_{\Phi}{\,\cdot\,}\nu_\Phi|
+ |\tilde X_{\Phi}{\,\cdot\,}(\rho_\Phi\circ\Pi_{\Gamma\!_r})|
\leq |\tilde X_{\Phi}{\,\cdot\,}\nu_\Phi|
+ \tfrac12 |\tilde X_{\Phi}|,
$$
hence
\begin{equation}\label{oldP}
|\tilde X_{\Phi} | \le 2 |\tilde X_{\Phi}{\,\cdot\,}\nu_\Phi|
\end{equation}
for every $\Phi\in {\mathcal D}_{\delta_2}$.
Moreover, as the $C^{1,\alpha}$-norm of $u_\Phi^\pm$ on $\Gamma_\Phi$ is uniformly
bounded with respect to $\Phi\in{\mathcal D}_{\delta_2}$, one can show that the map 
$$
\Phi\in {\mathcal D}_{\delta_2}
\mapsto \||\nabla_{\Gamma_{\Phi}} u_{\Phi}^- |^2 - |\nabla_{\Gamma_{\Phi}} u_{\Phi}^+|^2
+H_{\Phi}
 \|_{L^\infty(\Gamma_\Phi)}
$$ 
is continuous. In particular, as it vanishes at $\Phi=I$, for every $\e>0$ there esists $\delta\in(0,\delta_2)$ such that
$$
 \| |\nabla_{\Gamma_{\Phi}} u_{\Phi}^- |^2 - |\nabla_{\Gamma_{\Phi}} u_{\Phi}^+|^2
+H_{\Phi}
 \|_{L^\infty(\Gamma_\Phi)}
<\e
$$
for every $\Phi\in {\mathcal D}_\delta$.
Hence, there exists a constant $c_0>0$ such that for every $t\in[0,1]$
\begin{eqnarray*}
\lefteqn{
\int_{\Gamma_{\Phi_t}} \hspace{-2mm}
f_t
( -2X^{\parallel}_{\Phi_t} {\,\cdot\,} 
\nabla_{\Gamma_{\Phi_t}} ( X_{\Phi_t} {\,\cdot\,} \nu_{\Phi_t} )
+ \B_{\Phi_t} [ X^{\parallel}_{\Phi_t} , X^{\parallel}_{\Phi_t} ] +H_{\Phi_t}(X_{\Phi_t} {\,\cdot\,} \nu_{\Phi_t})^2
)\, d\hn }
\\
& \geq & {} 
-2\e\,  \|X_{\Phi_t} \|_{L^2(\Gamma_{\Phi_t})}  \|\nabla_{\Gamma_{\Phi_t}} (X_{\Phi_t}{\,\cdot\,} \nu_{\Phi_t} ) \|_{L^2(\Gamma_{\Phi_t})} -c_0\e\, \|X_{\Phi_t} \|_{L^2(\Gamma_{\Phi_t})}^2 
\\
& \geq & {} \vphantom{\int}
-4(1+c_0)\e\, \|X_{\Phi_t}{\,\cdot\,} \nu_{\Phi_t} \|^2_{H^1(\Gamma_{\Phi_t})}
\geq -8(1+c_0)\e\, \|\tilde X_\Phi{\,\cdot\,}\nu_\Phi\|^2_{H^1(\Gamma_\Phi)},
\end{eqnarray*}
where the last two inequalities follow from \eqref{also}, \eqref{oldP}, 
and the fact that $X_{\Phi_t}=\frac1t \tilde X_{\Phi_t}$. Choosing $\e$ so small that
$8C_1^2(1+c_0)\e<\frac18 (1-\lambda_1)$, 
claim \eqref{f2} follows from the previous inequality, \eqref{eq415}, and \eqref{eq411}, with $c:=\frac{1}{16}C_1^{-2} (1-\lambda_1)$.
\end{proof}

\begin{remark}\label{rmk:add}
We observe that in the course of the proof of Theorem~\ref{thm:suff} we made use of the technical assumption $N\leq3$ only in Lemma~\ref{lm:Pnorm}. 
Thus, by Remark~\ref{rmk:N=4} the following weaker version of
Theorem~\ref{thm:suff} holds in dimension $N>3$. 
If $(u,K)$ is a critical point in $U$ with respect to $\Gamma$ satisfying \eqref{var2+}, then for every $\alpha\in(0,1)$
there exists $\delta>0$ such that 
$$
\int_U |\nabla u|^2\, dx +\hn(K\cap U) <
\int_U |\nabla v|^2\, dx +\hn(\Phi(K\cap U))
$$
for every $C^{2,\alpha}$-diffeomorphism $\Phi$ on $\overline U$ with
$\Phi=I$ on $(K\cap U)\setminus\Gamma$, $\Gamma_\Phi\neq \Gam$, and  $\|\Phi - I\|_{C^{2,\alpha}}\le\delta$, and 
every $v\in L^{1,2}(U\setmeno \Phi(K\cap U))$ with $v=u$ $\hn$-a.e.\ 
on $\partial U$.
In other words, $(u,K)$ is an isolated $C^{2,\alpha}$-local minimizer in $U$ with respect to~$\Gamma$ for any $\alpha\in(0,1)$.
\end{remark}
\end{section}

\begin{section}{Stability and instability results}\label{sec:st}

We start with two results of stability in small domains.
In the first proposition we show that $(u,K)$ is an isolated $C^2$-local minimizer 
in a tubular neighbourhood $(\Gamma)_\e$ of $\Gamma$ (see \eqref{ingra}
for the definition of $(\Gamma)_\e$),
provided condition \eqref{hyp1} is satisfied.

\begin{proposition}\label{thm:small}
Let $\Om$, $(u,K)$, and $\Gamma$ be as in Definition~\ref{flux}.
Assume that $(u,K)$ is a critical point in $\Om$ with respect to $\Gamma$
and that $(\varphi,\varphi)_\sim> 0$ for every $\varphi\in H^1_0(\Gamma)\setmeno\{0\}$.
Assume furthermore that  $(\Gamma)_\e$ is an admissible subdomain for $(u,K)$ (in the sense of 
Definition~\ref{def:partreg}) for every $\e\in(0,\e_0)$.
Then there exists $\e_1\in (0, \e_0)$ such that for every $\e\leq\e_1$
the second variation is positive in $(\Gamma)_\e$; i.e.,
$$
\delta^2\!F((u,\Gamma); (\Gamma)_\e)[\varphi]>0
$$
for every $\varphi\in H^1_0(\Gamma)\setmeno\{0\}$. 
In particular, if $N\leq3$, $(u,K)$ is an isolated $C^2$-local minimizer in $(\Gamma)_\e$ with respect to $\Gamma$, while
if $N>3$, $(u,K)$ is an isolated $C^{2,\alpha}$-local minimizer in $(\Gamma)_\e$ with respect to $\Gamma$ for any $\alpha\in(0,1)$.
\end{proposition}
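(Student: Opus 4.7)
The plan is to apply the characterization provided by Theorem~\ref{thm:eq2}: since condition \eqref{hyp1} is assumed, it suffices to prove that $\mu((\Gamma)_\e)>1$ for every sufficiently small $\e>0$, where $\mu((\Gamma)_\e)$ denotes the value of the dual problem \eqref{dual} with $U=(\Gamma)_\e$. In fact I shall prove the stronger assertion $\mu((\Gamma)_\e)\to+\infty$ as $\e\to 0^+$.

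The main ingredient is a Poincar\'e-type trace estimate for functions vanishing on the outer boundary of a thin tubular neighbourhood. Since $\Gamma\subset\subset\Gamma_r$, for $\e$ small enough the orthogonal projection $\Pi_{\Gamma\!_r}$ is smooth on $(\Gamma)_\e$ and every $x\in(\Gamma)_\e\setminus\Gamma$ has the form $x=y\pm s\nu(y)$ with $y\in\Gamma_r$ and $0<s<\e$. For $v\in L^{1,2}_0((\Gamma)_\e\setmeno K;\partial(\Gamma)_\e)$, applied on each normal slice (which is in $H^1(0,\e)$ by Proposition~\ref{pr2}, since $(\Gamma)_\e\setmeno K$ is Lipschitz on each side of $\Gamma$), the fundamental theorem of calculus and Cauchy--Schwarz give
$$
|v^\pm(y)|^2\le\e\int_0^\e |\nabla v(y\pm s\nu(y))|^2\,ds.
$$
Integrating over $\Gamma$ and using the bounded distortion of the tubular chart, one obtains a constant $C>0$, independent of $\e$ and $v$, such that
$$
\|v^+\|_{L^2(\Gamma)}^2+\|v^-\|_{L^2(\Gamma)}^2\le C\e\int_{(\Gamma)_\e}|\nabla v|^2\,dx.
$$

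Next, pick any $v$ admissible for \eqref{dual} and set $\varphi:=R(A_+^*v^+-A_-^*v^-)\in H^1_0(\Gamma)$, so that $\|\varphi\|_\sim=1$. Testing the equation $-\Deltat\varphi+a\varphi=A_+^*v^+-A_-^*v^-$ with $\varphi$ itself and using the definition of $A_\pm^*$, we get
$$
1=\|\varphi\|_\sim^2=\int_\Gamma\big(v^+A_+\varphi-v^-A_-\varphi\big)\,d\hn.
$$
Since $u^\pm$ are smooth up to $\overline{\Gamma}$, we have $\|A_\pm\varphi\|_{L^2(\Gamma)}\le C_0\|\varphi\|_{H^1(\Gamma)}$, and by Proposition~\ref{prop:ps} the norms $\|\cdot\|_\sim$ and $\|\cdot\|_{H^1(\Gamma)}$ are equivalent on $H^1_0(\Gamma)$, so $\|A_\pm\varphi\|_{L^2(\Gamma)}\le C_1$ for a constant $C_1$ depending only on $(u,K)$ and $\Gamma$. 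Cauchy--Schwarz and the trace inequality above then yield
$$
1\le C_1\big(\|v^+\|_{L^2(\Gamma)}+\|v^-\|_{L^2(\Gamma)}\big)\le C_2\sqrt{\e}\,\|\nabla v\|_{L^2((\Gamma)_\e)}.
$$
Hence $2\int_{(\Gamma)_\e}|\nabla v|^2\,dx\ge 2/(C_2^2\e)$, and taking the infimum over admissible $v$ gives $\mu((\Gamma)_\e)\ge 2/(C_2^2\e)$, which exceeds $1$ for $\e$ small enough. By Theorem~\ref{thm:eq2}, this yields the positivity of $\delta^2\!F((u,\Gamma);(\Gamma)_\e)$ on $H^1_0(\Gamma)\setminus\{0\}$. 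The concluding statements on isolated $C^2$-local (resp.\ $C^{2,\alpha}$-local) minimality follow directly from Theorem~\ref{thm:suff} if $N\le 3$ and from Remark~\ref{rmk:add} if $N>3$.

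The only mildly delicate point is making the tubular--coordinate argument rigorous on the two sides of $\Gamma$ and near $\partial\Gamma$ (a subset of $\overline{\Gamma_r}$), since $v$ jumps across $\Gamma$ and has no a priori regularity along the rest of $K\cap(\Gamma)_\e$. However, because $\Gamma\subset\subset\Gamma_r$, for $\e$ small $(\Gamma)_\e$ is disjoint from $\Gamma_s$ and $K\cap(\Gamma)_\e$ is the smooth $(N-1)$-manifold $\Gamma_r\cap(\Gamma)_\e$, so the tubular chart is well-defined with uniform bi-Lipschitz constants, and the one-dimensional Poincar\'e inequality applies on each side independently.
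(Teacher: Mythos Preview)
Your proof is correct and begins with the same reduction via Theorem~\ref{thm:eq2}, but the core step is genuinely different from the paper's.  The paper argues by contradiction: assuming $\mu((\Gamma)_{\e_n})$ stays bounded along some sequence $\e_n\to0$, it extends near-minimizers $v_n$ by zero to all of $\Om$, observes that they form a bounded sequence in $L^{1,2}_0(\Om\setmeno K;\partial\Om)$ with supports of vanishing measure, hence converge weakly to $0$, and then invokes compactness of the map $v\mapsto R(A_+^*v^+ - A_-^*v^-)$ (see~\eqref{opera}) to contradict the constraint $\|R(A_+^*v_n^+ - A_-^*v_n^-)\|_\sim=1$.  Your argument is instead direct and quantitative: a one-dimensional Poincar\'e inequality along normal segments produces the trace bound $\|v^\pm\|_{L^2(\Gamma)}\lesssim\sqrt{\e}\,\|\nabla v\|_{L^2}$, which, combined with the duality identity $1=\int_\Gamma(v^+A_+\varphi - v^-A_-\varphi)\,d\hn$ and the uniform bound $\|A_\pm\varphi\|_{L^2}\le C\|\varphi\|_\sim$, yields the explicit rate $\mu((\Gamma)_\e)\gtrsim 1/\e$.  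Your route is more elementary, avoids the compactness machinery, and delivers extra information (the blow-up rate of $\mu$); the paper's soft compactness argument, on the other hand, sidesteps the tubular-coordinate bookkeeping and would transfer more readily to other families of shrinking subdomains where an explicit Poincar\'e constant is not available.
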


\begin{remark}
If $N=2$, by the previous proposition and Remark~\ref{rmk:2d} it follows that every critical point
is an isolated $C^2$-local minimizer in a tubular neighbourhood of a compact subarc of the 
regular part of the discontinuity set. This is in agreement with the result in \cite{MM01}, 
where in fact a stronger minimality property is proved. 
Instead if $N\geq3$, there exist critical points whose second variation is nonpositive 
in every tubular neighbourhood of the regular part of the discontinuity set. This follows from
Remark~\ref{rmk:2d}, where it is shown that condition \eqref{hyp1} may fail. 
\end{remark}

\begin{proof}[Proof of Proposition~\ref{thm:small}]
By Theorem~\ref{thm:eq2} it is enough to show that 
\begin{equation}\label{fail}
\lim_{\e\to 0^+}\mu((\Gamma)_\e)=+\infty,
\end{equation}
where $\mu((\Gamma)_\e)$ is the value of \eqref{dual} with $U$ replaced by
$(\Gamma)_\e$.
Assume by contradiction that \eqref{fail} fails. Then there exist $C>0$,
$\e_n\to 0^+$, and $v_n\in L^{1,2}_0
((\Gamma)_{\e_n}\setmeno K; \partial (\Gamma)_{\e_n})$ such that 
$\|R(A^*_+v_n^+ - A^*_-v_n^-)\|_{\sim}=1$ and
$$
\int_{(\Gamma)_{\e_n}}|\nabla v_n|^2\, dx\leq C.
$$
By setting $v_n=0$ on $\Om\setminus(\Gamma)_{\e_n}$ 
we have that $v_n$ is a bounded sequence in $L^{1,2}_0(\Om \setminus K;\partial \Om)$.
Since the measure of $(\Gamma)_{\e_n}$ goes to zero, we deduce that
$v_n$ converge to $0$ weakly in $L^{1,2}_0(\Om\setmeno K;\partial \Om)$.
As the operator \eqref{opera} is compact, we conclude that $R(A^*_+v_n^+ - A^*_-v_n^-)$
converge to $0$ strongly in $H^1_0(\Gamma)$, which contradicts
$\|R(A^*_+v_n^+ - A^*_-v_n^-)\|_{\sim}=1$.

The last part of the statement follows from Theorem~\ref{thm:suff}
and Remark~\ref{rmk:add}.
\end{proof}

In the next proposition we prove that the generic critical point $(u,K)$ is stable
with respect to $C^2$ perturbations with small support.

\begin{proposition}\label{thm:smsupp}
Let $\Om$, $(u,K)$, $U$, and $\Gamma$ be as in Definition~\ref{flux} and
assume in addition that $(u,K)$ is a critical point in $U$ with respect to $\Gamma$. 
Then there exists $R>0$ such that
\begin{equation}\label{task}
\delta^2\!F((u,\Gamma); U)[\varphi]>0
\end{equation}
for every $\varphi\in H^1_0(\Gam)\setmeno\{0\}$ with ${\rm diam}\,(\supp\,\varphi)<R$.
\end{proposition}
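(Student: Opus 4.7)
The plan is to show that each of the three summands making up the second variation can be compared with $\|\nabla_\Gamma\varphi\|^2_{L^2(\Gamma\cap U)}$ whenever $\varphi$ has small-diameter support, and that the positive tangential Dirichlet term dominates. Using \eqref{mj2} we first rewrite
\begin{equation*}
\delta^2\!F((u,\Gamma);U)[\varphi]=-2\int_U|\nabla \vf|^2\,dx+\int_{\Gam}|\nablat\varphi|^2\,d\hn+\int_{\Gam}a\varphi^2\,d\hn,
\end{equation*}
where $a$ is the bounded coefficient introduced in \eqref{bilf}. The only potentially negative contribution is the bulk term $-2\int_U|\nabla\vf|^2\,dx$, so the core of the argument is to bound it by $R$ times the $\|\nablat\varphi\|^2_{L^2}$ term, with constant independent of $\varphi$.

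The key estimate is $\|\nabla\vf\|_{L^2(U)}\le C\|\varphi\|_{H^{1/2}(\Gam)}$. To prove it, I would test the equation \eqref{vphi} satisfied by $\vf$ with $z=\vf$ itself and recognize the right-hand side as an $H^{-1/2}$–$H^{1/2}$ duality pairing on $\Gamma\cap U$:
\begin{equation*}
\int_U|\nabla\vf|^2\,dx=-\langle\divt(\varphi\nablat u^+),\vf^+\rangle+\langle\divt(\varphi\nablat u^-),\vf^-\rangle.
\end{equation*}
Since $u^\pm$ are smooth up to $\Gamma$ (regularity being uniform on the compact set $\overline{\Gam}\subset\subset\G_r$), we have $\|\divt(\varphi\nablat u^\pm)\|_{H^{-1/2}(\Gam)}\le C\|\varphi\|_{H^{1/2}(\Gam)}$. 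A standard trace argument in a tubular neighbourhood of $\Gam$ disjoint from $\G_s$ (where $\vf\in H^1$ by Proposition~\ref{pr2}), together with the fact that $\vf\in L^{1,2}_0(U\setmeno K;\partial U)$, gives $\|\vf^\pm\|_{H^{1/2}(\Gam)}\le C\|\nabla\vf\|_{L^2(U)}$. Combining these bounds and dividing yields the claimed estimate.

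Next I would invoke the interpolation inequality $\|\varphi\|_{H^{1/2}(\Gam)}^2\le C\|\varphi\|_{L^2(\Gam)}\|\varphi\|_{H^1(\Gam)}$ and use the diameter hypothesis: since $\supp\varphi$ is contained in a geodesic ball of radius $R$ in $\Gamma$ and $\varphi$ extends by zero, the Poincaré inequality on small balls in the smooth manifold $\Gamma$ gives $\|\varphi\|_{L^2(\Gam)}\le CR\|\nablat\varphi\|_{L^2(\Gam)}$ with $C$ uniform for $R$ small. Consequently, for $R$ sufficiently small,
\begin{equation*}
\int_U|\nabla\vf|^2\,dx\le C\|\varphi\|_{L^2(\Gam)}\|\varphi\|_{H^1(\Gam)}\le C'R\,\|\nablat\varphi\|^2_{L^2(\Gam)}.
\end{equation*}
Feeding this back, and using the same Poincaré inequality to absorb $\int_\Gam a\varphi^2\,d\hn$ into $\|a\|_\infty C R^2\|\nablat\varphi\|^2_{L^2}$, we obtain
\begin{equation*}
\delta^2\!F((u,\Gamma);U)[\varphi]\ge\bigl(1-2C'R-\|a\|_\infty CR^2\bigr)\|\nablat\varphi\|^2_{L^2(\Gam)},
\end{equation*}
which is strictly positive for $R$ small enough, since $\varphi\in H^1_0(\Gam)\setmeno\{0\}$ forces $\nablat\varphi\not\equiv 0$.

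The main obstacle is the $H^{1/2}$ trace estimate for $\vf$: one must be careful that although $\vf$ is only in $L^{1,2}_0(U\setmeno K;\partial U)$, the fact that $\overline\Gamma\subset\subset\Gamma_r$ allows one to work in a tubular neighbourhood $V$ of $\Gamma$ with $V\cap\Gamma_s=\varnothing$, where Proposition~\ref{pr2} gives $\vf\in H^1(V\setminus\Gamma)$, and the trace/Poincaré constants depend only on $U$, $K$, and $\Gamma$ (not on $\varphi$). Once this uniform bound is in hand, the remaining steps are routine.
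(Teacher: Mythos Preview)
Your argument is correct and takes a genuinely different route from the paper's. The paper works inside the spectral framework of Section~\ref{sec:eq}: it first uses the Poincar\'e inequality to ensure that $(\cdot,\cdot)_\sim$ is a scalar product on the subspace $H_{x,r}$ of functions supported in $B_r(x)$, defines a localized operator $T_{x,r}$, and then argues by contradiction. Assuming a sequence $\varphi_n$ with $\|\varphi_n\|_\sim=1$, shrinking supports, and $(T_{x_n,r_n}\varphi_n,\varphi_n)_\sim\ge C>0$, one passes to a fixed ball $B_{R_0}(x)$ (using a projection identity), observes that $\varphi_n\rightharpoonup 0$ weakly in $H^1_0(\Gam)$, and invokes the compactness of $T_{x,R_0}$ to force $(T_{x,R_0}\varphi_n,\varphi_n)_\sim\to 0$, a contradiction.

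Your approach instead produces an explicit quantitative lower bound $\delta^2\!F((u,\Gamma);U)[\varphi]\ge (1-C'R-C''R^2)\|\nablat\varphi\|_{L^2}^2$ by combining three ingredients that never appear in the paper's proof: the a~priori estimate $\|\nabla\vf\|_{L^2(U)}\le C\|\varphi\|_{H^{1/2}(\Gam)}$ obtained from the $H^{-1/2}$--$H^{1/2}$ duality and trace theory, the interpolation $\|\varphi\|_{H^{1/2}}^2\le C\|\varphi\|_{L^2}\|\varphi\|_{H^1}$, and the small-ball Poincar\'e inequality on~$\Gamma$. This is more elementary (it avoids the operator-theoretic machinery) and more informative (it gives the explicit rate in~$R$). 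The paper's argument, on the other hand, is softer: it only needs the \emph{compactness} of the trace map into $L^2(\Gam)$ rather than a uniform $H^{1/2}$ bound, which is a slightly more robust input. You are right to flag the uniform trace estimate $\|\vf^\pm\|_{H^{1/2}}\le C\|\nabla\vf\|_{L^2(U)}$ as the point requiring care; since $\overline\Gamma\subset\subset\Gamma_r$ admits a uniform tubular neighbourhood disjoint from $\Gamma_s$, and since the pairing only sees $\vf^\pm$ on $\supp\varphi$ where the divergence-form data has zero mean, this can indeed be justified with the constants depending only on $U$, $K$, and~$\Gamma$.
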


\begin{remark}
Arguing as in the proof of Theorem~\ref{thm:suff}, one can show that the thesis 
of Proposition~\ref{thm:smsupp} implies the following minimality property
in dimension $N\leq3$: 
there exists $\delta>0$ such that 
$$
\int_U |\nabla u|^2\, dx +\hn(K\cap U) <
\int_U |\nabla v|^2\, dx +\hn(\Phi(K\cap U))
$$
for every $\Phi\in \mathcal{D}_\delta$  with
${\rm diam}\,(\supp\,(\Phi-I)\cap \Gamma)<R$ and $\Gamma_\Phi\neq\G\cap U$, and 
every $v\in L^{1,2}(U\setmeno \Phi(K\cap U))$ with $v=u$ $\hn$-a.e.\ on $\partial U$.
\end{remark}

\begin{proof}[Proof of Proposition~\ref{thm:smsupp}]
As an easy consequence of Poincar\'e inequality, we infer that there exists $R_0>0$ 
such that for every $x\in\Gamma'$, with $\Gam\subset\subset\Gamma'\subset\subset\Gamma_r$, we have
\begin{equation}\label{psok}
(\varphi,\varphi)_\sim> 0
\end{equation} 
for every $\varphi\in H^1_0(\Gam)\setmeno\{0\}$ with $\supp\,\varphi\subset B_{R_0}(x)$. By Proposition~\ref{prop:ps} the bilinear form \eqref{bilf}
defines an equivalent scalar product on the subspace
$$
H_{x,r}:=\{\varphi\in H^1_0(\Gam):\ \supp\,\varphi\subset B_r(x)\}
$$
for every $r\leq R_0$ and every $x\in\Gamma'$.
Thus we can define by duality the operator $T_{x,r}:H_{x,r}\to H_{x,r}$ satisfying
\begin{equation}\label{tildeTR}
(T_{x,r}\varphi,\psi)_\sim = -2 \int_\Gam (\vf^+\divt (\psi\nablat u^+)
-\vf^-\divt (\psi\nablat u^-))\,d\hn
\end{equation}
for every $\varphi,\psi\in H_{x,r}$. The operator $T_{x,r}$ may be thought of as a ``localization"
of $T$ and turns out to be compact and self-adjoint. We note that by the representation formula
\eqref{tildeTR},  if $B_{r_1}(x_1)\subset B_{r_2}(x_2)$, then for every $\varphi\in H_{x_1,r_1}$ 
the function $T_{x_1,r_1}\varphi$ coincides with the orthogonal projection (with respect to
$(\cdot,\cdot)_\sim$) of $T_{x_2,r_2}\varphi$ on $H_{x_1,r_1}$.
Moreover, using \eqref{psok} and arguing as in the proof of Theorem~\ref{thm:eq1},
one can see that inequality \eqref{task} is satisfied for every $\varphi\in H_{x,r}$ if and only if 
$$
\max_{\genfrac{}{}{0pt}2{\scriptstyle\varphi\in H_{x,r}}{\scriptstyle\|\varphi\|_\sim=1}} (T_{x,r}\varphi,\varphi)_\sim<1.
$$
Therefore, to conclude the proof it is enough to show that
\begin{equation}\label{enth}
\lim_{r\to 0^+} \max_{\genfrac{}{}{0pt}2{\scriptstyle\varphi\in H_{x,r}}{\scriptstyle
\|\varphi\|_\sim=1}}  (T_{x,r}\varphi,\varphi)_\sim = 0
\quad \text{uniformly with respect to }x\in\Gam. 
\end{equation}
Assume by contradiction that \eqref{enth} fails. Then there exist $C>0$,
$x_n\in \Gam$, $r_n\to 0^+$, and $\varphi_n\in H_{x_n,r_n}$ such that $\|\varphi_n\|_\sim=1$ and
\begin{equation}\label{nume0}
(T_{x_n,r_n}\varphi_n, \varphi_n)_\sim \geq C.
\end{equation}
Without loss of generality we can assume that $x_n\to x\in\overline\Gam$ and $B_{r_n}(x_n)\subset
B_{R_0}(x)$ for $n$ large enough.
In particular, by the projection property mentioned before this implies
\begin{equation}\label{nume}
(T_{x_n,r_n}\varphi_n, \varphi_n)_\sim=(T_{x,R_0}\varphi_n, \varphi_n)_\sim.
\end{equation}
As $\|\varphi_n\|_\sim=1$ and the measure of the support of $\varphi_n$ tends to zero, we conclude that 
$\varphi_n\wto 0$ weakly in $H^1_0(\Gam)$.
Since $T_{x,R_0}$ is compact, it follows that
$T_{x,R_0}\varphi_n\to 0$ strongly in $H^1_0(\Gam)$ and in turn,
$(T_{x,R_0}\varphi_n, \varphi_n)_\sim\to 0$. By \eqref{nume} this contradicts \eqref{nume0}.
\end{proof}

We conclude this section with an example of instability in large domains.
A related explicit example will be discussed in the next section.
Let $w:\R^{N-1}\to\R$ be an affine function.
We consider as critical point the pair $(u,K)$, where for every $x=(x',x_N)\in\R^N$ 
$$
u(x):=
\begin{cases}
\hphantom{-}w(x') & \text{for } x_N\geq0, \\
-w(x') & \text{for } x_N<0,
\end{cases}
$$
and $K=\Gamma_r=\{x_N=0\}$.

\begin{proposition}\label{thm:fail}
There exists $R_0>0$ such that the second variation $\delta^2\!F( (u,K\cap B_R(x)); B_R(x))$
is nonpositive for every $x\in K$ and every $R>R_0$.
\end{proposition}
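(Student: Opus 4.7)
The argument is by scaling. First, since the critical point $(u,K)$ is translation invariant along $K$ and the quadratic form $\delta^2\!F$ depends on $u$ only through $\nabla u$, it suffices to take $x=0$. Writing $c:=\nabla w\in\R^{N-1}$, we may assume $c\neq 0$ (otherwise $u$ is constant on each side and the critical point is degenerate). Since $K=\{x_N=0\}$ is a hyperplane, $\B\equiv 0$, $H\equiv 0$, and since $\nablat u^\pm=\pm c$ is constant, the coefficient
$$
a=2\B[\nablat u^+,\nablat u^+]-2\B[\nablat u^-,\nablat u^-]-|\B|^2
$$
in \eqref{qvar2} vanishes identically, while the critical point condition \eqref{ecritical} is automatic. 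Combining with \eqref{mj2}, the second variation collapses to
\begin{equation}\label{simpprop}
\delta^2\!F((u,K\cap B_R);B_R)[\varphi]=-2\int_{B_R}|\nabla \vf|^2\,dx+\int_{K\cap B_R}|\nablat\varphi|^2\,d\hn.
\end{equation}

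Next, I would fix once and for all an auxiliary $\varphi\in C^\infty_c(K\cap B_1)$ with $c\cdot\nablat\varphi\not\equiv 0$ (any function whose restriction to the line spanned by $c$ is non-constant will do), and for $R>0$ introduce the rescaled function $\varphi_R(x'):=\varphi(x'/R)\in C^\infty_c(K\cap B_R)$. Using $\nablat u^\pm=\pm c$, the Neumann data \eqref{mj1} for $v_{\varphi_R}$ read $\partial_\nu v_{\varphi_R}^\pm=\pm c\cdot\nablat\varphi_R=\pm R^{-1}(c\cdot\nablat\varphi)(\cdot/R)$. Let $\tilde v\in L^{1,2}_0(B_1\setmeno K;\partial B_1)$ be the function harmonic in $B_1\setmeno K$, vanishing on $\partial B_1$, and satisfying $\partial_\nu\tilde v^\pm=\pm c\cdot\nablat\varphi$ on $K\cap B_1$. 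A direct substitution, using that $\nabla$ and $\nablat$ each scale by $R^{-1}$, shows that
$$
v_{\varphi_R}(x)=\tilde v(x/R)
$$
solves the defining weak problem \eqref{vphi} for $\varphi_R$ on $B_R$.

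A change of variables then gives
$$
\int_{B_R}|\nabla v_{\varphi_R}|^2\,dx=R^{N-2}\!\int_{B_1}|\nabla\tilde v|^2\,dy,\qquad \int_{K\cap B_R}|\nablat\varphi_R|^2\,d\hn=R^{N-3}\!\int_{K\cap B_1}|\nablat\varphi|^2\,d\hn,
$$
so \eqref{simpprop} evaluated at $\varphi_R$ becomes
$$
\delta^2\!F((u,K\cap B_R);B_R)[\varphi_R]=R^{N-3}\Bigl(\int_{K\cap B_1}|\nablat\varphi|^2\,d\hn-2R\!\int_{B_1}|\nabla\tilde v|^2\,dy\Bigr).
$$
The Neumann data of $\tilde v$ are not identically zero by the choice of $\varphi$, hence $\tilde v$ is not constant and $\int_{B_1}|\nabla\tilde v|^2\,dy>0$. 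The bracket is therefore strictly negative as soon as
$$
R>R_0:=\frac{\int_{K\cap B_1}|\nablat\varphi|^2\,d\hn}{2\int_{B_1}|\nabla\tilde v|^2\,dy}.
$$
Translating $\varphi_R$ to be centred at an arbitrary $x\in K$ produces a nonzero test function on which $\delta^2\!F$ is strictly negative, which is stronger than the asserted nonpositivity.

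The only mildly delicate point is the dilation identity $v_{\varphi_R}(x)=\tilde v(x/R)$; this reduces to noting that both the volume term $\int\nabla\vf\cdot\nabla z$ and the surface term $\int_{K\cap B_R}\divt(\varphi\nablat u^\pm)z^\pm$ in \eqref{vphi} acquire the same factor $R^{N-2}$ under the dilation $x\mapsto Rx$ with test functions $z(\cdot/R)$, so uniqueness for \eqref{vphi} identifies the two functions. Everything else is a routine scaling computation made possible by the disappearance of all curvature terms.
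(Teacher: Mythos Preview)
Your argument is correct. Both your proof and the paper's rest on the same scaling observation, but they are organized differently.

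The paper works through the spectral machinery of Section~\ref{sec:eq}: it takes the eigenpair $(v,\varphi)$ of the auxiliary system \eqref{auxil0} on $B_1(0)$ with $\lambda=\lambda_1(B_1(0))$, rescales it to $(v_r,\varphi_r)$ on $B_r(x_0)$ via $v_r(x)=v((x-x_0)/r)$ and $\varphi_r(x')=r\,\varphi((x'-x_0')/r)$, and checks that this pair solves \eqref{auxil0} with $\lambda=r\lambda_1(B_1(0))$. Proposition~\ref{conjpt} then gives $\lambda_1(B_r(x_0))\ge r\lambda_1(B_1(0))$, and Theorem~\ref{newthm} converts $\lambda_1>1$ into failure of \eqref{name}. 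This yields the sharp threshold $R_0=1/\lambda_1(B_1(0))$.

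Your route bypasses the eigenvalue formulation entirely: you fix an arbitrary test function $\varphi$ on $K\cap B_1$, rescale it, exploit the flatness of $K$ to reduce $\delta^2\!F$ to the two explicit integrals in your display, and read off the sign from the competition between the $R^{N-2}$ and $R^{N-3}$ terms. This is more elementary and self-contained, and it produces a concrete (though in general non-optimal) value of $R_0$. Optimizing your $R_0(\varphi)=\|\varphi\|_\sim^2/(T\varphi,\varphi)_\sim$ over $\varphi$ recovers exactly the paper's $1/\lambda_1(B_1(0))$, so the two arguments are really two faces of the same computation. One small remark: your parenthetical exclusion of the degenerate case $c=\nabla w=0$ is indeed necessary (and tacit in the paper as well), since in that case $v_\varphi\equiv 0$ and the second variation is positive definite on every ball.
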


\begin{proof}
We first note that, as $a=0$ in this case, condition \eqref{hyp1} is satisfied.
Therefore, we can consider the operator $T$ defined in \eqref{tildeT}. By Proposition~\ref{conjpt}
there exists a nontrivial solution $(v,\varphi)\in L^{1,2}_0(B_1(0)\setmeno K; \partial B_1(0))
{\times}H^1_0(K\cap B_1(0))$ of \eqref{auxil0} with $U=B_1(0)$ and $\lambda=\lambda_1(B_1(0))$.
For every $x_0\in K$ and every $r>0$
let us consider the functions $v_r\in  L^{1,2}_0(B_r(x_0)\setmeno K; \partial B_r(x_0))$ 
and $\varphi_r\in H^1_0(K\cap B_r(x_0))$ defined by $v_r(x):=v(\frac{x-x_0}{r})$ 
and $\varphi_r(x')=r\varphi(\frac{x'-x'_0}{r})$.
It is easy to see that $(v_r,\varphi_r)$ is a nontrivial solution of \eqref{auxil0} 
with $U=B_r(x_0)$ and $\lambda=r\lambda_1(B_1(0))$. 
Therefore, by Proposition~\ref{conjpt} we have $\lambda_1(B_r(x_0))\geq r\lambda_1(B_1(0))$. 
The conclusion follows by Theorem~\ref{newthm} choosing $R_0=
1/\lambda_1(B_1(0))$. 
\end{proof}

\end{section}

\begin{section}{An explicit example}\label{sec:ex}

As a final application of the results of the previous sections,
we discuss an explicit example, for simplicity in dimension $2$. 
In $\Om=\R^2$ we consider the function
$$
u(x,y)=
\begin{cases}
\hphantom{-}x & \text{for } y\geq0, \\
-x & \text{for } y<0,
\end{cases}
$$
whose discontinuity set is given by $K=\Gamma_r=\R{\times}\{0\}$.
For every Lipschitz bounded domain $U$ in $\R^2$ we recall that $\lambda_1(U)$
denotes the constant introduced in \eqref{max}, corresponding to this choice of $u$, $K$, $U$, and to
$\Gamma=\Gamma_r\cap U$. We will compute explicitly the value of $\lambda_1$ for
rectangles which are symmetric with respect to $K$.

\begin{proposition}\label{prop:x-x}
Let $U=(x_0,x_0+\ell){\times}(-y_0,y_0)$, $y_0>0$,  and let $\Gamma=(x_0,x_0+\ell){\times}\{0\}$.
Then
\begin{equation}\label{defg}
\lambda_1(U)=\tfrac{2\ell}{\pi}\tanh\tfrac{2\pi y_0}{\ell},
\end{equation}
so that the second variation is positive if and only if
\begin{equation}\label{tanh}
\tfrac{2\ell}{\pi}\tanh\tfrac{2\pi y_0}{\ell}<1.
\end{equation}
In particular, if \eqref{tanh} holds,
then $(u,K)$ is an isolated $C^2$-local minimizer in $U$ with respect to $\Gamma$; if 
$\tfrac{2\ell}{\pi}\tanh\tfrac{2\pi y_0}{\ell}>1$,
then $(u,K)$ is not a minimizer in $U$.
\end{proposition}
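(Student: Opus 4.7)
The plan is first to establish the explicit formula~\eqref{defg}; the conclusions about (isolated) $C^2$-local minimality and non-minimality then follow at once from the cited theorems. If \eqref{tanh} holds, $\lambda_1(U)<1$, so by Theorem~\ref{thm:eq1} the second variation is positive definite on $H^1_0(\Gamma)$ and Theorem~\ref{thm:suff} yields isolated $C^2$-local minimality. If instead $\tfrac{2\ell}{\pi}\tanh\tfrac{2\pi y_0}{\ell}>1$, then $\lambda_1(U)>1$, so by Theorem~\ref{newthm} the second variation takes a negative value and Theorem~\ref{th:necessary} rules out $(u,K)$ being a $C^\infty$-local (hence global) minimizer in $U$.

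To compute $\lambda_1(U)$, I would work with the strong form~\eqref{auxil}. Since $\Gamma$ is flat, $\B\equiv 0$ and $a\equiv 0$; since $\nabla_\Gamma u^\pm=\pm e_1$ and $\nu=e_2$, the system becomes: $v$ harmonic in $U\setminus K$ with $v=0$ on $\partial U$, and $\lambda\,\partial_\nu v^\pm=\pm\varphi'$ on $\Gamma$, $-\varphi''=2(\partial_x v^++\partial_x v^-)$ on $\Gamma\cap U$. The symmetry $(x,y)\mapsto(x,-y)$ shows the $y$-odd part of $v$ forces $\varphi\equiv 0$ and hence $v\equiv 0$, so one may assume $v$ is even in $y$, giving $v^+=v^-=:W$ on $\Gamma$. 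Integrating the ODE for $\varphi$ with the zero boundary conditions gives $\varphi'=-4(W-\overline{W})$, where $\overline{W}$ is the mean of $W$ on $\Gamma$. Setting $V=v|_{U^+}$ this yields the Steklov-type problem $\Delta V=0$ in $U^+$, $V=0$ on the top and on the vertical sides, $\lambda\,\partial_y V|_\Gamma=-4(W-\overline{W})$. Expanding $W=\sum_m A_m\sin(k_m\xi)$ in the sine basis ($\xi=x-x_0$, $k_m=m\pi/\ell$) diagonalizes the harmonic extension: $\partial_y V|_\Gamma=-\sum_m A_m q_m\sin(k_m\xi)$ with $q_m:=k_m\coth(k_m y_0)$. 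Using that the sine coefficients of the constant $4\overline{W}$ vanish for even $m$ and equal $16\overline{W}/(m\pi)$ for odd $m$, the additional symmetry $\xi\mapsto\ell-\xi$ splits the problem into two uncoupled families: family~(A), containing only even $m$ (so $\overline{W}=0$), gives $\lambda=4/q_m$ with the largest eigenvalue $\lambda=4/q_2=\tfrac{2\ell}{\pi}\tanh(2\pi y_0/\ell)$, realized by $V(\xi,y)=\sin(k_2\xi)\sinh(k_2(y_0-y))/\cosh(k_2 y_0)$; family~(B), containing only odd $m$, yields via the consistency $\overline{W}=\sum_{m\text{ odd}}\tfrac{2A_m}{m\pi}$ the transcendental characteristic equation $h(\lambda):=\sum_{m\text{ odd}}\tfrac{32}{m^2\pi^2(4-\lambda q_m)}=1$.

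The last step is to show that every eigenvalue of family~(B) is strictly smaller than $4/q_2$, so that $\lambda_1(U)=4/q_2$. Differentiating $q(k)=k\coth(ky_0)$ gives $q'(k)=(\tfrac12\sinh(2ky_0)-ky_0)/\sinh^2(ky_0)>0$, so $q_m$ is strictly increasing in $m$; the poles $4/q_m$ (for odd $m$) decrease from $4/q_1$ to $0$, and $4/q_2$ lies in $(4/q_3,4/q_1)$ together with the largest root $\lambda^\ast$ of $h=1$. On this interval $h$ is strictly increasing from $-\infty$ to $+\infty$, so $\lambda^\ast<4/q_2$ is equivalent to $h(4/q_2)>1$. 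The main obstacle is this inequality; I would prove it by combining the identity $q_1/q_2=1/(1+\tanh^2(k_1 y_0))\in(1/2,1)$ (valid for all $y_0>0$), which already yields $\tfrac{8q_2}{\pi^2(q_2-q_1)}>\tfrac{16}{\pi^2}>1$, with an explicit estimate of the remaining negative tail $\tfrac{8q_2}{\pi^2}\sum_{m\geq 3,\text{ odd}}\tfrac{1}{m^2(q_m-q_2)}$ via partial fractions and the monotonicity of $q_m-q_2$, keeping all constants uniform in $y_0>0$.
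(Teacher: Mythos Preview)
Your setup and reduction are essentially identical to the paper's: the $y$-symmetry, the elimination of $\varphi$ to obtain $\lambda\,\partial_y V=-4(W-\overline{W})$, the sine expansion, the even/odd splitting, the identification of $4/q_2$ as the top eigenvalue of family~(A), and the reduction of family~(B) to a secular equation. Your equation $h(\lambda)=1$ is equivalent to the paper's $g(\lambda)=0$ (one checks $g(\lambda)=\tfrac{\pi^2}{8\lambda}(h(\lambda)-1)$ up to a positive factor), so both approaches hinge on showing $h(4/q_2)>1$.

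The only real gap is in your closing estimate. The bound $\tfrac{8q_2}{\pi^2(q_2-q_1)}\ge\tfrac{16}{\pi^2}$ cannot be combined with a \emph{uniform} bound on the tail, because the tail $\tfrac{8q_2}{\pi^2}\sum_{m\ge3,\text{ odd}}\tfrac{1}{m^2(q_m-q_2)}$ blows up like $y_0^{-2}$ as $y_0\to0^+$ (use $q_m\sim y_0^{-1}+k_m^2 y_0/3$). Both the leading term and the tail diverge at the same rate, so what must be controlled is their \emph{ratio}. After your monotonicity bound $q_m-q_2\ge q_3-q_2$ and $\sum_{m\ge3,\text{ odd}}m^{-2}=\tfrac{\pi^2}{8}-1$, the needed inequality becomes exactly the paper's
\[
\frac{q_1(q_3-q_2)}{q_3(q_2-q_1)}>\frac{\pi^2}{8}-1,
\]
which the paper closes by computing the left-hand side explicitly via the $\tanh$ triple-angle formula as $\tfrac{5-t^2}{3(1+3t^2)}$ with $t=\tanh(k_1y_0)$; this is decreasing on $(0,1)$ with infimum $\tfrac13>\tfrac{\pi^2}{8}-1$. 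Your double-angle identity for $q_1/q_2$ is the right spirit but not enough; you also need the triple-angle identity to pin down $q_3$ and obtain a sharp, $y_0$-uniform ratio bound.
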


\begin{remark}\label{rmk:ex}
In \cite{MM01} it is proved that, if a condition stronger than \eqref{tanh}
is satisfied, then a stronger minimality property holds. More precisely, from the results
of \cite[Section~4]{MM01} it follows that
there exists a constant $c_0<1$ such that
if $\tfrac{4\ell}{\pi}\tanh\tfrac{\pi y_0}{\ell}<c_0$, then $(u,K)$ minimizes $F$
among all competitors in $SBV(\Om)$, whose extended graph is contained in a sufficiently small tubular neighbourhood of the extended graph of $u$.
\end{remark}

\begin{proof}[Proof of Proposition~\ref{prop:x-x}]
 We choose $\nu(x,0):=(0,1)$ as an orientation for $\Gamma$. Let $(v,\varphi)$ be a nontrivial solution of \eqref{auxil}. By symmetry we have
$v(x,y)=v(x,-y)$; thus, setting $R:=(x_0, x_0+\ell){\times}(0,y_0)$, we have that $(v,\varphi)$ solves the problem
$$
\begin{cases}
\Delta v =0 & \text{in } R, \\
v=0 & \text{on }\partial R\setmeno\Gamma, \\
\lambda\,\partial_y v =\varphi'
& \text{on }\Gamma, \\
 \varphi''=-4\,\partial_x v
& \text{on }\Gamma.
\end{cases}
$$
Combining together the two conditions on $\Gamma$, we deduce that
$$
\lambda\,\partial_y v=-4(v-c) \quad \text{on } \Gamma,
$$
where $c:=\tfrac{1}{\ell} \int_{x_0}^{x_0+\ell} v(x,0)\, dx$. The computation of $\lambda_1$ amounts to 
the identification of the largest $\lambda$ such that there exists a nontrivial solution of
\begin{equation}\label{Rsys}
\begin{cases}
\Delta v =0 & \text{in } R, \\
v=0 & \text{on }\partial R\setmeno\Gamma, \\
\lambda\,\partial_y v=-4(v-c)
& \text{on }\Gamma.
\end{cases}
\end{equation}
Expanding $v$ in series of sines and taking into account the first two conditions of the system,
we have that
$$
v(x,y)=\sum_n c_n\sin(\tfrac{n\pi}{\ell}(x-x_0))\sinh(\tfrac{n\pi}{\ell}(y_0-y))
$$
with $c_n\in\R$. Differentiating with respect to $y$ and imposing that
$\partial_y v$ has zero average on $\Gamma$, we obtain the condition
\begin{equation}\label{1c}
\sum_{\genfrac{}{}{0pt}2{\scriptstyle n\in \N}{\scriptstyle n \text{ odd}}} c_n\cosh(\tfrac{n\pi y_0}{\ell})=0.
\end{equation}
Expanding also $c$ in series of sines on $[x_0, x_0+\ell]$, one can see that the last condition in \eqref{Rsys}
is equivalent to
$$
\begin{array}{c}
\displaystyle
\lambda\frac{\pi}{\ell}\sum_{n\in\N} nc_n \cosh(\tfrac{n\pi y_0}{\ell})\sin(\tfrac{n\pi}{\ell}(x-x_0))
\smallskip
\\
\displaystyle
=
4\sum_{n\in\N} c_n \sinh(\tfrac{n\pi y_0}{\ell})\sin(\tfrac{n\pi}{\ell}(x-x_0))
-16c \sum_{\genfrac{}{}{0pt}2{\scriptstyle n\in \N}{\scriptstyle n \text{ odd}}} 
\tfrac{1}{n\pi}\sin(\tfrac{n\pi}{\ell}(x-x_0)),
\end{array}
$$
which implies
\begin{eqnarray}
& \label{2c}
\lambda\frac{\pi}{\ell}nc_n \cosh\tfrac{n\pi y_0}{\ell} =
4c_n \sinh\tfrac{n\pi y_0}{\ell} \quad \text{ for } n \text{ even},
\\
& \label{3c}
 \lambda\frac{\pi}{\ell}nc_n \cosh\tfrac{n\pi y_0}{\ell} =
4c_n \sinh\tfrac{n\pi y_0}{\ell}-16c
\tfrac{1}{n\pi} 
\quad \text{ for } n \text{ odd}.
\end{eqnarray}
From \eqref{2c} we deduce that either $c_n=0$ for every $n$ even or there exists an even number
$\bar n$ such that
$$
\lambda=4\tfrac{\ell}{\pi\bar n}\tanh\tfrac{\bar n\pi y_0}{\ell}.
$$
Clearly the biggest $\lambda$ which falls in the latter case, corresponds to $\bar n=2$ and hence,
\begin{equation}\label{lam0}
\lambda_1(U)\geq\tfrac{2\ell}{\pi}\tanh\tfrac{2\pi y_0}{\ell}.
\end{equation}

If $c_n=0$ for every $n$ even, it follows from \eqref{1c} and \eqref{3c} that $c\neq0$.
Hence \eqref{3c} is equivalent to
$$
c_n=\frac{16}{n\pi}\,\frac{c}{4 \sinh\tfrac{n\pi y_0}{\ell}-\lambda\frac{n\pi}{\ell} \cosh\tfrac{n\pi y_0}{\ell}}
$$
for every $n$ odd.
Condition \eqref{1c} and the fact that $c\neq0$ finally yield
\begin{equation}\label{serie}
\sum_{\genfrac{}{}{0pt}2{\scriptstyle n\in \N}{\scriptstyle n \text{ odd}}} 
\frac{1}{n^2}\,\frac{1}{\frac{4}{n} \tanh\tfrac{n\pi y_0}{\ell}-\lambda\frac{\pi}{\ell}} =0.
\end{equation}
By \eqref{lam0} the proof is concluded if we show that the previous equation has no solution in the interval $(\tfrac{2\ell}{\pi}\tanh\tfrac{2\pi y_0}{\ell}, +\infty)$.
If $\lambda>\tfrac{4\ell}{\pi}\tanh\tfrac{\pi y_0}{\ell}$, all the terms of the series in \eqref{serie}
are negative (since $\tanh x/x$ is decreasing for $x>0$), so that we can restrict our attention to the interval $[\tfrac{2\ell}{\pi}\tanh\tfrac{2\pi y_0}{\ell},\tfrac{4\ell}{\pi}\tanh\tfrac{\pi y_0}{\ell})$.
Let $g(\lambda)$ be the function given by the left-hand side of \eqref{serie}. It is easy to see that $g$
is monotone increasing in $[\tfrac{2\ell}{\pi}\tanh\tfrac{2\pi y_0}{\ell},\tfrac{4\ell}{\pi}\tanh\tfrac{\pi y_0}{\ell})$.
Hence it will be enough to prove that $g(\tfrac{2\ell}{\pi}\tanh\tfrac{2\pi y_0}{\ell})>0$. This is equivalent to
\begin{equation}\label{pap}
\frac{1}{4 \tanh\tfrac{\pi y_0}{\ell} -2\tanh\tfrac{2\pi y_0}{\ell}}
> \sum_{\genfrac{}{}{0pt}2{\scriptstyle n\geq3}{\scriptstyle n \text{ odd}}} 
\frac{1}{n^2}\, 
\frac{1}{2\tanh\tfrac{2\pi y_0}{\ell} -\frac{4}{n} \tanh\tfrac{n\pi y_0}{\ell}}.
\end{equation}
Using the inequality
$$
2\tanh\tfrac{2\pi y_0}{\ell} -\frac{4}{n} \tanh\tfrac{n\pi y_0}{\ell}\geq 2\tanh\tfrac{2\pi y_0}{\ell} -\frac{4}{3} \tanh\tfrac{3\pi y_0}{\ell}
$$
for every $n\geq 3$ and the identity
$$
\sum_{\genfrac{}{}{0pt}2{\scriptstyle n\geq3}{\scriptstyle n \text{ odd}}} 
\frac{1}{n^2} =\frac{\pi^2}{8}-1,
$$
inequality \eqref{pap} will be proved if we show 
\begin{equation}\label{papo}
\frac{\frac12\tanh\tfrac{2\pi y_0}{\ell} -\frac13 \tanh\tfrac{3\pi y_0}{\ell}}{\tanh\tfrac{\pi y_0}{\ell} -\frac12\tanh\tfrac{2\pi y_0}{\ell}}
> \frac{\pi^2}{8}-1.
\end{equation}
Applying the addition formula for the hyperbolic tangent it is easy to see that 
$$
\frac{\frac12\tanh(2x) -\frac13 \tanh(3x)}{\tanh x -\frac12\tanh(2x)}=
\frac{5-\tanh^2x}{3(1+3\tanh^2x)}
$$
for every $x>0$. By this identity it is then clear that the left-hand side of \eqref{papo}
is a decreasing function of $y_0$ and its infimum  
is equal to $\frac13>\frac{\pi^2}{8}-1$. 
This concludes the proof of \eqref{pap}, and in turn of \eqref{defg}.

The last part of the statement follows now from Theorem~\ref{thm:eq1}, Remark~\ref{rmk:2d},
Theorem~\ref{thm:suff}, and Theorem~\ref{newthm}.
\end{proof}

\end{section}

\begin{section}{Appendix}\label{appendix}

In this section we collect some auxiliary results, which are needed in the proof of Theorem~\ref{th:var2}.

We start with a proposition where the regularity properties of the map $t\mapsto u_{\Phi_{t}}$ are
investigated (see \eqref{uphi} for the definition of $u_{\Phi_{t}}$). We give only a sketch of the proof.

\begin{proposition}\label{prop:reg}
Under the assumptions of Theorem~\ref{th:var2},
let $\tilde u_t:=u_{\Phi_{t}}\circ\Phi_{t}$ and $ v_t:= \tilde u_t -u$.  The following properties hold:
\begin{itemize}
\item[(i)] the map $t\mapsto  v_t$ belongs to $C^{\infty}((-1,1); L^{1,2}_0(U\setmeno K; \partial U) )$;
\item[(ii)] for every $x_0\in \Gamma$ let $B$ be a ball centered at $x_0$ such that
$B\subset U$, $\overline B\cap \Gamma_s=\varnothing$, and $B\setmeno \Gamma$ has two connected components, $B_+$ and  $B_-$. For every $t\in(-1,1)$ let $\tilde u_{t}^\pm$ denote the restriction of $\tilde u_{t}$ to $B_\pm$. Then the map $\hat u^\pm(t,x):=
\tilde u_{t}^\pm(x)$ belongs to $C^{\infty}((-1,1)\times \overline B_\pm)$.
\end{itemize}
\end{proposition}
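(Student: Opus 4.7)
The plan is to pull back the problem defining $u_{\Phi_t}$ via $\Phi_t$ itself, turning it into an elliptic variational equation for $\tilde u_t$ on the \emph{fixed} domain $U\setmeno K$ with $t$-dependent smooth coefficients; once this is done, (i) follows from the implicit function theorem in Banach spaces and (ii) from classical Schauder estimates for oblique derivative problems. Performing the change of variables $x=\Phi_t(y)$ in \eqref{uphi} and setting $A_t:=(\det D\Phi_t)(D\Phi_t)^{-1}(D\Phi_t)^{-T}$, one checks that $v_t=\tilde u_t-u$ belongs to $L^{1,2}_0(U\setmeno K;\partial U)$ (since $\Phi_t=I$ near $\partial U$) and satisfies
$$
\int_U A_t[\nabla v_t,\nabla z]\, dx=-\int_U (A_t-I)[\nabla u,\nabla z]\, dx
\qquad\text{for all }z\in L^{1,2}_0(U\setmeno K;\partial U).
$$
Because $(t,x)\mapsto\Phi_t(x)$ is $C^\infty$ on $(-1,1)\times\overline U$ and $\Phi_0=I$, the coefficient matrix $A_t$ is $C^\infty$ in $(t,x)$, uniformly elliptic for $t$ close to $0$, with $A_0\equiv I$.

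For (i) I would apply the implicit function theorem to the affine-in-$w$ map
$\mathcal{F}:(-1,1)\times L^{1,2}_0(U\setmeno K;\partial U)\to \bigl(L^{1,2}_0(U\setmeno K;\partial U)\bigr)^{*}$ defined by
$$
\langle \mathcal{F}(t,w),z\rangle:=\int_U A_t[\nabla(w+u),\nabla z]\, dx.
$$
The map $\mathcal{F}$ is jointly $C^\infty$, $\mathcal{F}(0,0)=0$ by \eqref{uequiv}, and the partial Fr\'echet derivative $\partial_w\mathcal{F}(0,0)$ is the Dirichlet inner product on $L^{1,2}_0(U\setmeno K;\partial U)$, hence an isomorphism onto the dual by Proposition~\ref{cor1}. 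This produces a unique $C^\infty$ branch $t\mapsto w_t$ near $t=0$ with $\mathcal{F}(t,w_t)=0$; by uniqueness of solutions of the pulled-back equation, $w_t=v_t$. Running the same argument along the flow as in Remark~\ref{remt} (i.e.\ implicit function theorem at any fixed $s\in(-1,1)$ using the reparameterisation $\tilde\Phi_h=\Phi_{s+h}\circ\Phi_s^{-1}$) gives smoothness on all of $(-1,1)$.

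For (ii), localizing to the ball $B$ of the statement, $\tilde u_t^{\pm}$ is a weak solution in $B_\pm$ of the uniformly elliptic equation $-\div(A_t\nabla \tilde u_t^{\pm})=0$, with Dirichlet data $u$ on $\partial B\cap\overline{B}_\pm$ (independent of $t$) and, on the interface $\Ga\cap B$, with the oblique boundary condition $A_t[\nabla\tilde u_t^{\pm}]\cdot\nu=0$ obtained by pulling back $\partial_{\nu_{\Phi_t}}u_{\Phi_t}^{\pm}=0$ on $\Ga_{\Phi_t}$ through the explicit formula \eqref{nutfit0}. All coefficients and the oblique vector field are $C^\infty$ in $(t,x)$, and $B$ stays at positive distance from $\Ga_s$; hence classical Schauder estimates up to the boundary for the oblique derivative problem (see, e.g., \cite{Tro}) give $\tilde u_t^{\pm}\in C^\infty(\overline{B}_\pm)$ with bounds locally uniform in $t$. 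Formally differentiating the equation with respect to $t$, one sees that $\partial_t^k\tilde u_t^{\pm}$ solves the same elliptic problem with the same oblique condition, up to forcing terms and boundary data that are polynomial in lower-order $t$-derivatives of $\tilde u_t^{\pm}$ and in the smooth objects coming from $\Phi_t$. A standard induction on $k$, combined with Schauder estimates at each step, therefore yields $\hat u^{\pm}\in C^\infty((-1,1)\times\overline{B}_\pm)$.

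The main obstacle is not the implicit function step, which is essentially algebraic, but the uniform up-to-$\Ga$ regularity of the pulled-back oblique derivative problem together with the bootstrap on the order of $t$-derivatives. Once the transmission problem has been flattened onto the fixed manifold $\Ga$ via $\Phi_t$, however, this is a routine application of classical linear elliptic theory; no special feature of the Mumford-Shah functional intervenes beyond the a priori $C^\infty$ regularity of $(u,\Gamma)$ built into the definition of $\Ar(\Om)$ and of admissible flow.
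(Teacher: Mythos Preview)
Your argument for part~(i) is essentially the paper's: both pull back via $\Phi_t$ to obtain a variational equation on the fixed domain with smooth coefficients $A_t$ and apply the implicit function theorem. The only cosmetic difference is that you map $\mathcal{F}$ into the dual $(L^{1,2}_0)^*$, whereas the paper composes with the Riesz isomorphism and maps into $L^{1,2}_0$ itself; these are equivalent.

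For part~(ii) your route differs from the paper's. The paper works with difference quotients $w_h=\tfrac1h(v_{t_0+h}-v_{t_0})$, applies $W^{2,p}$ elliptic estimates on $B_\pm$ to pass to the limit in $C^1$, identifies the $L^{1,2}$-derivative from~(i) with the pointwise derivative, and then iterates. Your Schauder-plus-bootstrap scheme is a legitimate alternative, but one step is wrong as written: you assert that $\tilde u_t^{\pm}$ has Dirichlet data $u$ on $\partial B\cap\overline{B}_\pm$, independent of~$t$. This is false; $\Phi_t$ need not be the identity on $\partial B$, so the trace of $\tilde u_t=u_{\Phi_t}\circ\Phi_t$ on $\partial B$ is genuinely $t$-dependent and only controlled, a priori, in the weak sense provided by part~(i). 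The fix is to drop any boundary condition on $\partial B$ and use \emph{interior} Schauder estimates (interior with respect to $\partial B$, but up to $\Gamma$): take a slightly larger ball $B'\supset\overline B$ with the same properties, use~(i) to bound $\tilde u_t$ and all its $t$-derivatives in $H^1(B'_\pm)$, and apply up-to-$\Gamma$ estimates on $B_\pm\subset\subset B'_\pm$ at each step of the induction. With that correction your scheme goes through.
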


\begin{proof}[Proof (Sketch)]  
In order to prove part (i), it is enough to show that for every $t_0\in (-1,1)$ the map $t\mapsto  v_t$
is smooth in a neighbourhood $(t_0-\e, t_0+\e)$. For simplicity we consider only the case $t_0=0$ 
(the general case can be treated similarly). 

First of all, we note that by \eqref{uphi} the function $v_{t}$ solves 
\begin{equation}\label{solves}
\int_U A_{t}[\nabla v_{t}, \nabla z]\, dx+\int_UA_{t}[\nabla u, \nabla z]\, dx=0
\quad \text{for every }z\in L^{1,2}_0(U\setmeno K; \partial U),
\end{equation}
where $A_{t}:=\frac{D\Psi_{t}D\Psi_{t}^T}{{\det D\Psi_{t}}}\circ \Phi_{t}$ and $\Psi_{t}:=\Phi_{t}^{-1}$. 

Let us consider the map $\mathcal{F}: (-1,1){\times}L^{1,2}_0(U\setmeno K; \partial U)
\to L^{1,2}_0(U\setmeno K; \partial U)$ defined in the following way:
for every $t\in(-1,1)$ and every $v\in L^{1,2}_0(U\setmeno K; \partial U)$ the function
$\mathcal{F}(t, v)$ is the unique solution $\xi\in  L^{1,2}_0(U\setmeno K; \partial U)$ of
$$
\int_U\nabla \xi{\,\cdot\,} \nabla z=\int_U A_{t}[\nabla v, \nabla z]\, dx+\int_UA_{t}[\nabla u, \nabla z]\, dx
\quad \text{for every }z\in L^{1,2}_0(U\setmeno K; \partial U).
$$
It can be checked that $\mathcal{F}$ is of class $C^{\infty}$, $\mathcal F(0,0)=0$
(as $(u,K\cap U)\in\Ar(U)$ by assumption), and $\partial_v\mathcal F(0,0)$ is an invertible bounded linear operator from $L^{1,2}_0(U\setmeno K; \partial U)$ onto itself.  
Hence, since $v_{t}$ satisfies $\mathcal{F}(t, v_{t})=0$ by \eqref{solves},
part (i) of the statement follows from the Implicit Function Theorem.

Let us fix $x_0\in\Gamma$ and let $B$, $B_+$, and $B_-$ be as in part (ii)
of the statement.
Let $v'_{t_0}$ be the derivative of $t\mapsto v_{t}$ with respect to the
$L^{1,2}$-norm, evaluated at some $t_0$, which exists by part (i).
We claim that 
\begin{equation}\label{claim-app}
v'_{t_0}=\dot{v}_{t_0} \quad\text{in $B$}\qquad\text{and}\qquad  
\hat{u}^\pm\in C^1((-1,1){\times}\overline
B_{\pm}).
\end{equation}
To this aim we first observe that by \eqref{solves} the function
$w_h:=\frac1h(v_{t_0+h}-v_{t_0})$ is the solution of 
\begin{equation}\label{solves2}
\int_U A_{t_0+h}[\nabla w_h, \nabla z]\, dx
+\int_U\tfrac1h(A_{t_0+h}-A_{t_0})[\nabla \tilde u_{t_0}, \nabla z]\, dx=0
\quad \text{for every }z\in L^{1,2}_0(U\setminus K; \partial U).
\end{equation}
By standard elliptic estimates for every $p>1$ the restrictions $w_h^\pm$ 
to $B_\pm$ satisfy
$$
\|w_h^\pm\|_{W^{2,p}(B_{\pm})}\leq C_p
$$
for some constant $C_p$ independent of $h$. 
We deduce that $v^\pm_{t_0+h}\to v^\pm_{t_0}$ and
$w^\pm_h\to (v'_{t_0})^\pm$ in $C^1(\overline B_{\pm})$, as $h\to 0$. 
In particular, 
\begin{equation}\label{cont1}
(t,x)\mapsto \nabla v^\pm_{t}(x)\quad\text{is continuous in }
(-1,1){\times}\overline B_{\pm} 
\end{equation}
and the equality in \eqref{claim-app} holds.
Moreover, from \eqref{solves2} and the strong convergence of $\nabla w_h$ to
$\nabla \dot{v}_{t_0}$, we infer that
$$
\int_U A_{t_0}[\nabla \dot{v}_{t_0}, \nabla z]\, dx
+\int_U\dot{A}_{t_0}[\nabla \tilde u_{t_0}, \nabla z]\, dx=0\quad 
\text{for every }z\in L^{1,2}_0(U\setmeno K; \partial U).
$$
Using this equation and arguing as before, we obtain
$$
\tfrac1h(\dot v^\pm_{t_0+h}-\dot v^\pm_{t_0})\to 
\ddot{v}_{t_0}^\pm\quad\text{in $C^0(\overline B_{\pm})$},
$$
which yields, in particular, the continuity of the map 
$(t,x)\mapsto \dot v^\pm_{t}(x)$.
Together with \eqref{cont1}, this implies that the map 
$(t,x)\mapsto v^\pm_{t}(x)$ belongs to $C^1((-1,1){\times}\overline B_{\pm})$,
which is equivalent to the second part of \eqref{claim-app}.
Finally, the $C^{\infty}$ regularity can be obtained by iterating the
arguments above.
\end{proof}

The content of the next lemma is a pair of preliminary identities, which will be needed
in the proof of Lemma~\ref{lem:identita}.

\begin{lemma}
Under the assumptions of Theorem~\ref{th:var2}, the following equalities hold on $\Gamma$:
\begin{eqnarray} 
& DX [X^{\parallel}, \nu] =-
\B[X^{\parallel}, X^{\parallel}] 
+  X^{\parallel} {\,\cdot\,} 
\nabla_{\Gamma} ( X {\,\cdot\,} \nu),   \label{gradx} \\
&\frac{\partial}{\partial t}( \nu_{\Phi_{t}} \circ \Phi_{t} \big)|_{t=0}
= DX[ \nu , \nu ] \nu - ( DX )^{T} [ \nu ]=-(D_{\Gamma}X)^T[\nu]. \label{nufipto}
\end{eqnarray}
\end{lemma}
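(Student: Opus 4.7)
The plan is to establish the two identities separately, using in both cases the product rule for tangential differentiation together with the fact that the normal extension $\nu$ satisfies $\partial_\nu\nu=0$ on $\Gamma$ (and hence $\B[\nu]=D\nu[\nu]=0$).

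For \eqref{gradx}, I would start from the tangential product rule recalled in Section~\ref{prel}:
$$
\nabla_\Gamma(X{\,\cdot\,}\nu)=(D_\Gamma X)^T[\nu]+(D_\Gamma\nu)^T[X]=(D_\Gamma X)^T[\nu]+\B[X],
$$
where I used \eqref{sff} together with the symmetry of $\B$. Taking the scalar product with $X^\parallel\in T_x\Gamma$ and writing $X=X^\parallel+(X{\,\cdot\,}\nu)\nu$, one has $\B[X,X^\parallel]=\B[X^\parallel,X^\parallel]+(X{\,\cdot\,}\nu)\B[\nu,X^\parallel]=\B[X^\parallel,X^\parallel]$, since $\B[\nu]=0$ on $\Gamma$. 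On the other hand, $X^\parallel{\,\cdot\,}(D_\Gamma X)^T[\nu]=\nu{\,\cdot\,}D_\Gamma X[X^\parallel]$, and because $X^\parallel$ is tangent, $D_\Gamma X[X^\parallel]=DX[X^\parallel]$. Rearranging yields \eqref{gradx}.

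For \eqref{nufipto}, set $w_t:=(D\Phi_t)^{-T}[\nu]$, so that by \eqref{nutfit0} one has $\nu_{\Phi_t}\circ\Phi_t=w_t/|w_t|$ on $\Gamma$, with $w_0=\nu$ and $|w_0|=1$. Since $(D\Phi_t)^{-T}=((D\Phi_t)^T)^{-1}$ and $\frac{d}{dt}(D\Phi_t)^T|_{t=0}=(DX)^T$, the standard formula for the derivative of an inverse gives $\dot w_0=-(DX)^T[\nu]$. Differentiating the normalized vector at $t=0$, I obtain
$$
\frac{\partial}{\partial t}\big(\nu_{\Phi_t}\circ\Phi_t\big)\big|_{t=0}=\dot w_0-(\nu{\,\cdot\,}\dot w_0)\nu=-(DX)^T[\nu]+DX[\nu,\nu]\nu,
$$
which is the first equality. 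For the second equality, from $D_\Gamma X=DX\circ\pi$ with $\pi=I-\nu\otimes\nu$ (symmetric), I get $(D_\Gamma X)^T=(I-\nu\otimes\nu)(DX)^T$, so
$$
(D_\Gamma X)^T[\nu]=(DX)^T[\nu]-\big((DX)^T[\nu]{\,\cdot\,}\nu\big)\nu=(DX)^T[\nu]-DX[\nu,\nu]\nu,
$$
and \eqref{nufipto} follows.

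Neither step looks like a genuine obstacle: the whole computation is a careful bookkeeping of tangential versus normal components, and the only substantive input is the identity $\B[\nu]=0$ on $\Gamma$ coming from \eqref{NU}. The point that most easily causes sign or placement errors is the derivative of $(D\Phi_t)^{-T}$ in the second identity, so I would double-check it by writing $(D\Phi_t)^T(D\Phi_t)^{-T}=I$ and differentiating at $t=0$ before proceeding.
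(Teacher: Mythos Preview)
Your proof is correct and follows essentially the same route as the paper's: for \eqref{gradx} the paper expands $X^{\parallel}{\,\cdot\,}\nabla(X{\,\cdot\,}\nu)$ via the product rule and invokes $(D\nu)^T=\B$ together with the invariance of $T_x\Gamma$ under $\B$, while for \eqref{nufipto} it sets $w_t=(D\Phi_t)^{-T}[\nu]$ and differentiates the normalization at $t=0$, exactly as you do. Your write-up is in fact slightly more explicit than the paper's in justifying the second equality $DX[\nu,\nu]\nu-(DX)^T[\nu]=-(D_\Gamma X)^T[\nu]$ via $(D_\Gamma X)^T=(I-\nu\otimes\nu)(DX)^T$.
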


\begin{proof}
As $X^{\parallel} {\,\cdot\,} \nabla_{\Gamma} ( X {\,\cdot\,} \nu )=X^{\parallel} {\,\cdot\,} \nabla ( X {\,\cdot\,} \nu ) = ( DX )^T [ \nu, X^{\parallel} ] + ( D \nu )^T [ X, X^{\parallel} ]$,
identity \eqref{gradx} follows by observing that $(D\nu)^T=(D_{\Gamma}\nu)^T=\B$ on $\Gamma$
and that $T_x\Gamma$ is invariant for $D_{\Gamma}\nu(x)$ for every $x\in\Gamma$.

Setting  $w_{t}:=(D\Phi_{t} )^{-T}  [\nu]$, it follows from \eqref{nutfit} that
\begin{equation}
\frac{\partial}{\partial t}( \nu_{\Phi_{t}} \circ \Phi_{t} )|_{t=0}
= - ( \nu {\,\cdot\,} \dot{w} ) \nu + \dot{w}. \label{nuph}
\end{equation}
The equality \eqref{nufipto} then follows from \eqref{nuph} and the fact that $\dot{w}= -( D X )^{T}[ \nu ]$.
\end{proof}

We conclude this appendix with the proof of Lemma~\ref{lem:identita}.

\begin{proof}[Proof of Lemma~\ref{lem:identita}]
To simplify the notation in the sequel we will write simply $u$ instead of $u^\pm$.
For $x\in \Gamma$ let  $\tau_1(y) , \dots , \tau_{N-1}(y)$ denote an orthonormal basis of $T_y\Gamma$
which varies smoothly with $y$ in  a neighbourhood of $x$.
For $i \in \{ 1, \dots, N-1 \}$ we have
$$
\partial_{\tau_i}( \partial_{\tau_i} u) =\partial_{\tau_i} ( \nabla u {\,\cdot\,} \tau_i ) =
\nabla^2 u\, [ \tau_i , \tau_i ] + \nabla u {\,\cdot\,} \partial_{\tau_i} \tau_i.
$$
Expressing $\partial_{\tau_i} \tau_i$ in the basis $\{\tau_1 , \dots , \tau_{N-1} , \nu\}$
and using the fact that $\partial_{\nu} u=0$ on $\Gamma$, we obtain
$$
\partial_{\tau_i}( \partial_{\tau_i} u) =  \nabla^2 u\, [ \tau_i , \tau_i ] +
\sum_{k=1}^{N-1} (\tau_k {\,\cdot\,} \partial_{\tau_i} \tau_i  ) \partial_{\tau_k} u. 
$$
Hence, as $\tau_k {\,\cdot\,} \partial_{\tau_i} \tau_i=- \tau_i {\,\cdot\,} \partial_{\tau_i} \tau_k$ and
$\tau_i {\,\cdot\,} \tau_k=0$ for $k\neq i$, we have
\begin{eqnarray*}
\sum_{i=1}^{N-1} \nabla^2 u\,[\tau_i, \tau_i] 
& = &
\sum_{i,k=1}^{N-1} (\tau_i {\,\cdot\,} \tau_k)
 \partial_{\tau_i} ( \partial_{\tau_k} u) 
+ \sum_{i,k=1}^{N-1} ( \tau_i {\,\cdot\,} \partial_{\tau_i} \tau_k )
\partial_{\tau_k} u 
\\
& = & \sum_{i=1}^{N-1} \tau_i {\,\cdot\,} \partial_{\tau_i} 
\Big( \sum_{k=1}^{N-1} \partial_{\tau_k}u\, \tau_k \Big) 
= \Delta_{\Gamma} u.
\end{eqnarray*}
Since $u$ is harmonic, the first term in the previous identity coincides with $-\nabla^2 u\,[\nu, \nu]$,
so that (a) follows.

By differentiating along the direction $\tau_i$  the identity $\partial_{\nu} u=0$ we deduce
$$
0 = \partial_{\tau_i} ( \nabla u {\,\cdot\,} \nu ) = 
\nabla^2 u\, [ \tau_i , \nu ]
+ \nabla u {\,\cdot\,} \partial_{\tau_i} \nu
= \nabla^2 u\, [ \tau_i , \nu ]
+ \nabla_{\Gamma} u {\,\cdot\,} \partial_{\tau_i} \nu.
$$
Since $\partial_{\tau_i} \nu=\B\tau_i$ and $\B$ is symmetric, the previous equality yields
$$
\nabla^2 u\, [ \tau_i , \nu ] 
=-\B[ \nabla_{\Gamma} u ,  \tau_i]\qquad \text{for }i=1,\dots, N-1.
$$
By linearity the identity continues to hold if $\tau_i$ is replaced by any tangent vector.
Hence, writing $\nabla^2 u\, [ X , \nu ] = ( X {\,\cdot\,} \nu )\, \nabla^2 u [ \nu , \nu ] 
+ \nabla^2 u\, [ X^{\parallel} , \nu ] $ and applying (a), we have
\begin{equation}\label{bpf}
\nabla^2 u\, [ X , \nu ] = -( X {\,\cdot\,} \nu ) \Delta_{\Gamma} u
-\B[ \nabla_{\Gamma} u , X^{\parallel} ] = -( X {\,\cdot\,} \nu ) \Delta_{\Gamma} u
-\B[ \nabla_{\Gamma} u , X ],
\end{equation}
where in the last equality we used the fact that $\B[ \nabla_{\Gamma} u]$ is tangent to $\Gamma$.
This proves (b). We also note that identity \eqref{bpf} still holds when $X$ is replaced 
by $\nabla_{\Gamma}u$ (in fact by any vector field), so that we obtain (e). 

Using (b) and recalling that $\B=D_{\Gamma} \nu$, we find 
$$
\div_{\Gamma}[ ( X {\,\cdot\,} \nu ) \nabla_{\Gamma} u ] 
= (D_{\Gamma} X )^T [ \nu , \nabla_{\G} u ]
+\B[ X , \nabla_{\Gamma} u ] 
+ ( X {\,\cdot\,} \nu ) \Delta_{\Gamma} u  = (D_{\Gamma} X )^T [ \nu , \nabla_{\G} u ] 
- \nabla^2 u\, [ X , \nu ],
$$
which shows (c).

Since $D\nu$ coincides with the Hessian of the signed distance function,
we have by \cite[Theorem~3, Part~I]{AD00} that
$$
\partial_{\nu} (D\nu) = -( D\nu )^2.
$$
Since $H=\div\,\nu$ and $D\nu=D_{\Gamma}\nu=\B$ is symmetric on $\Gamma$, 
we immediately deduce (d).

As $\frac{\partial}{\partial t}( \nu_{\Phi_{t}} \circ \Phi_{t} )|_{t=0}
= \dot{\nu} +D\nu [ X ]$, we obtain (f) by comparison
with (\ref{nufipto}).

Finally, as $\frac{\partial}{\partial t}( J_{\Phi_{t}} )|_{t=0} = \div_{\Gamma} X$
(see \cite[Lemma 2.49]{Zo}), we have by (\ref{nufipto}) 
\begin{eqnarray*}
\tfrac{\partial}{\partial t}(\dot{\Phi}_{t} {\,\cdot\,} ( \nu_{\Phi_{t}} \circ \Phi_{t} ) \, J_{\Phi_{t}}  )|_{t=0}
& = &
Z {\,\cdot\,} \nu + X {\,\cdot\,} \tfrac{\partial}{\partial t}( \nu_{\Phi_{t}} \circ \Phi_{t} )|_{t=0}
+ ( X {\,\cdot\,} \nu ) \div_{\Gamma} X
\\
& = & 
Z {\,\cdot\,} \nu + ( X {\,\cdot\,} \nu ) DX [ \nu , \nu ]
- ( D X )^{T} [ \nu , X ] 
+ ( X {\,\cdot\,} \nu ) \div_{\Gamma} X  
\\
& = &
Z {\,\cdot\,} \nu - ( D X )^T [  \nu, X^{\parallel}  ]+ ( X {\,\cdot\,} \nu ) \div_{\Gamma} X 
\\
& = & 
Z {\,\cdot\,} \nu - DX [ X^{\parallel} , \nu ]
- X^{\parallel} {\,\cdot\,} \nabla_{\Gamma} ( X {\,\cdot\,} \nu  )
+ \div_{\Gamma} \big( ( X {\,\cdot\,} \nu ) X \big).
\end{eqnarray*}
Using (\ref{gradx}) we obtain identity (g).
\end{proof}

\end{section}

\bigskip
\bigskip
\bigskip
\bigskip
\noindent
{\bf Acknowledgments.} {
The authors wish to thank Gianni Dal Maso for interesting discussions on the subject of the paper.
This work is part of the Project ``Calculus of Variations" 2004, 
supported by the Italian Ministry of Education, University, and Research and of the research project 
``Mathematical Challenges in Nanomechanics" sponsored by Istituto Nazionale di Alta Matematica (INdAM) ``F.~Severi".
}

\bigskip
\bigskip
\bigskip
\bigskip
{\frenchspacing
\begin{thebibliography}{99}

\bibitem{AD00}
L. Ambrosio, N. Dancer:
{\it Calculus of variations and partial differential equations.
Topics on geometrical evolution problems and degree theory.\/} 
Ed.\ G. Buttazzo, A. Marino and M. K. V. Murthy. 
Springer-Verlag, Berlin, 2000.

\bibitem{AFP} L. Ambrosio, N. Fusco, D. Pallara:
{\it Functions of bounded variation and free discontinuity problems.\/}
Oxford University Press, New York, 2000.

\bibitem{Bon} A. Bonnet:
On the regularity of edges in image segmentation. 
{\it Ann. Inst. H. Poincar\'e Anal. Nonlin.\/} {\bf 13} (1996), 485-528.

\bibitem{DT02} G. Dal Maso, R. Toader:
A model for the quasi-static growth of brittle fractures:
existence and approximation results. 
{\it Arch. Ration. Mech. Anal.\/} {\bf 162} (2002), 101-135.

\bibitem{DL54} J. Deny, J.L. Lions:
Les espaces du type de Beppo Levi.
\textit{\it Ann. Inst. Fourier, Grenoble\/}
{\bf 5} (1954), 305-370.

\bibitem{FrMa98} G.A. Francfort, J.-J. Marigo: 
Revisiting brittle fracture as an energy minimization problem.
{\it J. Mech. Phys. Solids\/} {\bf 46} (1998), 1319-1342.

\bibitem{Gr20} A.A. Griffith: The phenomena of rupture and flow in solids. 
{\it Philos. Trans. R. Soc. Lond. Ser. A Math. Phys. Eng. Sci.\/} 
{\bf 221} (1920), 163-198.

\bibitem{gur} M.E. Gurtin:
{\it An introduction to continuum mechanics.\/}
Mathematics in Science and Engineering, 158. 
Academic Press Inc., New York-London, 1981.

\bibitem{Hebey} E. Hebey: 
{\it Sobolev spaces on Riemannian manifolds.\/} 
Lecture Notes in Mathematics, 1635, Springer-Verlag, Berlin, 1996.

\bibitem{KLM} H. Koch, G. Leoni, M. Morini:
On optimal regularity of free boundary problems and a conjecture of De Giorgi.
{\it Comm. Pure Appl. Math.\/} {\bf 58} (2005), 1051-1076.

\bibitem{MM01} M.G. Mora, M. Morini:
Local calibrations for minimizers of the Mumford-Shah functional
with a regular discontinuity set.
{\it Ann. Inst. H. Poincar\'e Anal. Nonlin.\/} {\bf 18} (2001), 403-436.

\bibitem{MS1} D. Mumford, J. Shah:
Boundary detection by minimizing functionals, I.
{\it Proc. IEEE Conf. on Computer Vision and Pattern Recognition
(San Francisco, 1985)\/}.

\bibitem{MS2} D. Mumford, J. Shah:
Optimal approximation by piecewise smooth functions
and associated variational problems.
{\it Comm. Pure Appl. Math.\/} {\bf 42} (1989), 577-685.

\bibitem{Sim} L. Simon:
{\it Lectures on geometric measure theory.\/}
Proceedings of the Centre for Mathematical Analysis, 
Australian National University, 3. Australian National University, 
Centre for Mathematical Analysis, Canberra, 1983.

\bibitem{Zo}  J. Sokolowski, J.P. Zol\'esio:
{\it Introduction to shape optimization. Shape sensitivity analysis.\/} 
Springer Series in Computational Mathematics, 16, 
Springer-Verlag, Berlin, 1992. 

\bibitem{Tro} G.M. Troianiello:
{\it Elliptic differential equations and obstacle problems.\/}
The University Series in Mathematics. Plenum Press, New York, 1987.
\end {thebibliography}
}

\end{document}